\definecolor{darkblue}{rgb}{0.0, 0.0, 0.55}
\newtheorem{Theorem}{Theorem}[section] 
\newtheorem{Definition}[Theorem]{Definition}
\newtheorem{Assumption}[Theorem]{Assumption}
\newtheorem{Remark}[Theorem]{Remark}
\newtheorem{Proposition}[Theorem]{Proposition}
\numberwithin{equation}{section}
\definecolor{airforceblue}{rgb}{0.36, 0.54, 0.66}
\definecolor{aliceblue}{rgb}{0.94, 0.97, 1.0}
\definecolor{oldlace}{rgb}{0.99, 0.96, 0.9}
\definecolor{burntorange}{rgb}{0.8, 0.33, 0.0}
\definecolor{honeydew}{rgb}{0.94, 1.0, 0.94}
\definecolor{forestgreen}{rgb}{0.13, 0.55, 0.13}
\newcommand{\E}[1]{\mathbb{E}\left[#1\right]}
\newcommand{\Prob}[1]{\mathbb{P}\left(#1\right)}
\newcommand{\Y}{\mathbf{Y}}
\newcommand{\Z}{\mathbf{Z}}
\newcommand{\M}{\mathbf{M}}
\newcommand{\kc}{\mathbf{H}}
\newcommand{\y}{\mathbf{y}}
\newcommand{\n}{\mathbf{n}}
\newcommand{\m}{\mathbf{m}}
\newcommand{\vv}{\mathbf{v}}
\newcommand{\e}{\mathbf{e}}
\newcommand{\norm}[1]{\|#1\|_1}
\newcommand{\nth}{^{(n)}}
\newcommand{\minus}{\setminus \{\boldsymbol{0}\}}
\newcommand{\norma}[1]{\left\lVert#1\right\rVert_2}
\newcommand{\norminfty}[1]{\left\lVert#1\right\rVert_\infty}
\newcommand{\metric}{\psi}
\title{\vspace*{-40pt}\bf{Large-sample analysis of cost functionals for inference under the coalescent}}
\date{\today}
\author[, a ]{Martina Favero
\thanks{Corresponding author. \textit{Email address:}  martina.favero@math.su.se}
}
\author[ b , c ]{Jere Koskela}
\affil[, a ]{\small{Department of Mathematics, Stockholm University, 106 91 Sweden }}
\affil[ b ]{School of Mathematics, Statistics and Physics, Newcastle University,  NE1 7RU United Kingdom}
\affil[ c ]{Department of Statistics, University of Warwick, CV4 7AL United Kingdom}
\begin{document}

\maketitle

\begin{abstract}
The coalescent is a foundational model of latent genealogical trees under neutral evolution, but suffers from intractable sampling probabilities.
Methods for approximating these sampling probabilities either introduce bias or fail to scale to large sample sizes.
We show that a class of cost functionals of the coalescent with recurrent mutation and a finite number of alleles converge to tractable processes in the infinite-sample limit.
A particular choice of costs yields insight about importance sampling methods, which are a classical tool for coalescent sampling probability approximation.
These insights reveal that the behaviour of coalescent importance sampling algorithms differs markedly from standard sequential importance samplers, with or without resampling.
We conduct a simulation study to verify that our asymptotics are accurate for algorithms with finite (and moderate) sample sizes.
Our results constitute the first theoretical description of large-sample importance sampling algorithms for the coalescent, provide heuristics for the a priori optimisation of computational effort, and identify settings where resampling is harmful for algorithm performance.
We observe strikingly different behaviour for importance sampling methods under the infinite sites model of mutation, which is regarded as a good and more tractable approximation of finite alleles mutation in most respects.
\end{abstract}



\section{Introduction}

The coalescent \citep{kingman1982b} is widely used in population genetics, either in its original form or in one of its numerous generalisations, to model or simulate the ancestral history (\textit{genealogy}) of a sample of individuals. 
A crucial quantity for inference under the coalescent is the sampling probability $p(\n)$, i.e.\ the probability of observing a sample $\n\in\mathbb{N}^d\setminus\{\boldsymbol{0}\}$, with $n_i$ being the number of individuals carrying genetic type (\textit{allele}) $i$, and $d$ being the number of possible alleles.
In many statistical applications, the sampling probability constitutes the likelihood function for model parameters or other quantities of interest, such as the mutation rate or effective population size. 
Here we consider a finite number of alleles under recurrent mutation, and neglect other genetic forces such as selection and recombination.
Even in this simple setting the sampling probability is not known explicitly, with the exception of so-called parent-independent mutation discussed in Remark \ref{remark:PIM} below.
A recursive formula for $p(\n)$ is available \citep{lundstrom:1992, sawyer1987}, but intractable in practice when the sample size $\norm{\n}$ is even moderately large.
Our interest is in the large-sample-size regime, to which we give precise meaning in Assumption \ref{assumption:large_sample_size}. 

Because of the difficulty of computing the sampling probability exactly, even for moderate sample sizes, Monte Carlo methods have been developed to estimate it.
They broadly split into methods based on tree-valued Markov chain Monte Carlo, importance sampling and sequential Monte Carlo based on simulating coalescent trees sequentially from observed sequences at the leaves to the root, and approximate Bayesian computation which resorts to comparing observed and simulated summary statistics.
Several review articles cover the range of methods available, and we direct the interested reader to \cite{beaumont:2010, marjoram:2006, Stephens2007}.
We will develop an asymptotic description of a class of weighted functionals of the coalescent process, which admits analysis of importance sampling algorithms for large sample sizes as a special case.
To our knowledge, this constitutes the first \textit{a priori} analysis of coalescent importance sampling algorithms.
We begin with an overview of coalescent importance sampling methods.

The history of coalescent inference based on backward-in-time importance sampling starts with the Griffiths--Tavar\'e scheme \citep{griffiths1994simulating}.
Subsequently, \cite{stephens2000} developed a more efficient importance sampling algorithm by characterising the family of optimal but intractable proposal distributions, and by defining a tractable approximation.
Their importance sampling scheme has since been extended in numerous ways, accounting for
the infinite sites mutation model \citep{hobolth2008},
selection \citep{stephens2003},
recombination \citep{fearnhead2001, griffiths2008},
multiple mergers ($\Lambda$-coalescent) \citep{birkner2008, birkner2011, Koskela2015},
and simultaneous multiple mergers ($\Xi$-coalescent) \citep{Koskela2015}.

Importance sampling is closely connected to sequential Monte Carlo, which essentially amounts to generating many sequential importance sampling replicates in parallel and resampling them based on their importance weights to stochastically cull low-weight particles while duplicating high-weight ones \citep[Chapter 9]{chopin:2020}.
Resampling is essential in many applications to prevent exponential growth of importance weight variance in the number of sequential steps \citep{doucet2011}. 
The behaviour of coalescent importance weights is known to differ from this regime \citep{fearnhead2001}, and so-called stopping time resampling has been developed to make resampling applicable to coalescent models \citep{chen2005, jenkins2012stopping}.

It is well known that Monte Carlo methods for the coalescent do not scale well to large sample size or more complex biological models.
As a result, the approximately optimal proposal distributions instigated by \cite{stephens2000} have also been used as probabilistic models in their own right, without importance weighting or rejection control to correct for the fact that they differ from the coalescent sampling distribution.
This approach is particularly prominent in multi-locus settings with recombination \citep{li:2003}.
Indeed, many existing chromosome-scale inference packages rely on these approximate sampling distributions; we mention \texttt{Chromopainter} \citep{lawson:2012} and \texttt{tsinfer} \citep{kelleher2019} as examples.

An entirely different approach to the approximation of the sampling probability consists of deriving series expansions amenable to asymptotics in regimes where some parameters are large. See for example  \cite{jenkins2009,jenkins2010,jenkins2012,jenkins2015}  
for strong recombination, 
\cite{wakeley2008,favero2025,fan2024} for strong selection, 
and \cite{wakeley2009} for strong mutation.
For the large-sample-size regime, the first order of the asymptotic expansion of the sampling probability  is available \citep{favero2022} but it is expressed in terms of the generally unknown stationary density function of an associated Wright--Fisher diffusion.
It does not seem possible to derive a more explicit expression, nor higher orders of the asymptotic expansion, by employing the classical techniques for the large parameters regimes mentioned above. 

These challenges, together with the canonical nature of the coalescent as a null model of neutral genetic evolution,  motivate our analysis of a class of cost functionals of coalescent block-counting processes for large sample sizes.
A particular choice of costs yields large-sample asymptotics of coalescent importance sampling algorithms as an application. 
We define a large sample size as follows.
\begin{Assumption}[Large-sample asymptotics]
\label{assumption:large_sample_size}
We consider samples of the form $n\y_0\nth$, where $\y_0\nth\in\frac{1}{n}\mathbb{N}^d$, 
with $\frac{1}{n}\mathbb{N}^d$ being the lattice with spacing $\frac{1}{n}$, and we are interested in large-sample asymptotics, corresponding to $n\to\infty$. 
We assume that 
$$ \lim_{n\to\infty} \y_0\nth = \y_0,$$
for some $\y_0\in \mathbb{R}_+^d$. 
For convenience we also assume $\norm{\y_0}=1$. In this way, the size of the sample $n\y_0\nth$, for large $n$, is approximately equal to $n$. 
\end{Assumption}

In this large-sample regime, we extend a previous convergence result \citep[Thm 2.1]{favero2024} on the block-counting process of the coalescent and the corresponding mutation-counting process to include a sequence of costs.
The convergence of general cost-weighted block-counting processes constitutes one of the two main results of this paper, Theorem \ref{thm:weak_conv}. 
Its proof is an extension and modification of that of \citep[Thm 2.1]{favero2024}, based on the analysis of the tractable parent-independent mutation case, followed by a change of measure between parent-independent and general recurrent mutation. 
This extended convergence result connects the asymptotics of \cite{favero2024} with practical coalescent sequential importance sampling algorithms, which we demonstrate by conducting an a priori analysis of their performance. 
To our knowledge, this work is the first tractable description of these algorithms outside the special case of parent-independent mutation.
Previous work has focused on approximating the optimal but intractable proposal distribution \cite{hobolth2008, stephens2000}, whereas our focus is describing the variance arising from the use of tractable proposal distributions.
The crucial idea for the analysis is based on the following interpretation.
At each step, the discrepancy between a one-step proposal distribution and the intractable true sampling distribution can be viewed as the cost of that step.
We write the sequential importance weights in terms of this cost sequence and employ our convergence result to study the asymptotic behaviour of the weights of classical importance sampling algorithms, particularly those of \cite{griffiths1994simulating} and \cite{stephens2000}.
This constitutes the second main theoretical contribution of the paper, Theorem \ref{thm:weights_convergence}. 

The idea of using a cost framework for the asymptotic analysis of importance sampling algorithms is inspired by the stochastic control approach to rare events simulation. This can be based on large deviations principles when the probability of the rare event is exponentially decaying, e.g.  \cite{dupuis2004}, or it can be based on Lyapunov methods in heavy-tailed settings, e.g. \cite{blanchet2012}. 
These approaches are not applicable to the coalescent, necessitating the development of our bespoke approach based on convergence of cost functionals. 
While the main motivation for the construction of the cost framework is the analysis of importance sampling algorithms, the resulting limit of cost processes is generic and potentially of independent interest.

Our theory makes the surprising prediction that, for large samples, normalised importance weights converge in distribution to 1 under mild conditions which both the \cite{griffiths1994simulating} and \cite{stephens2000} proposal distributions satisfy (c.f.\ Theorem \ref{thm:weights_convergence} and Remark \ref{rmk:general_proposals}).
Such convergence strongly suggests that the only contribution to overall importance weight variance arises from a relatively small number of sequential steps during which the number of remaining lineages in the coalescent tree is small.
This sets the coalescent apart from typical sequential importance sampling applications in which variance of importance weights grows exponentially in the number of steps \citep{doucet2011}.
The fact that the behaviour of coalescent importance weights differs from standard settings has been observed before \citep{fearnhead2008}, but our results predict that the variance of coalescent importance weights remains non-standard even when stopping time resampling is employed.

We conduct a simulation study to show that the predicted pattern of importance weight variance occurs in practice with moderate sample sizes.
We make use of the effect by showing that coalescent sequential importance sampling methods can be improved by using a small number of simulation replicates initially, and branching them out to a large number of replicates once the number of remaining extant lineages becomes small.
The approach of targeting simulation replicates to those sequential steps which contribute to high variance is well-established \citep{lee:2018}, but typically relies on pilot runs to estimate one-step variances.
Our theory facilitates its heuristic use for the coalescent without trial runs.
In a similar vein, we show empirically that resampling, which typically reduces the growth of importance weight variance from exponential to linear in the number of sequential importance sampling steps \citep{doucet2011}, actually reduces the accuracy of the importance sampling algorithm of \cite{stephens2000}, even when stopping time resampling is used.
Thus, the practical contributions of our theory are (i) a heuristic for optimising the allocation of computational resources for finite alleles importance sampling, and (ii) a caution against employing resampling (including stopping time resampling) without a case-by-case assessment of its impact.
The latter is significant because resampling is widely regarded as essential for good performance of practical importance sampling schemes, to which we present a clear and practical counterexample in Section \ref{subsection:fam}.

Finally, while our asymptotic theory is predicated on a finite number of alleles and recurrent mutation, we investigate whether similar empirical results hold for the so-called infinite sites model of mutation (see Section \ref{subsect:ism} for a description).
The infinite sites model is a more tractable approximation of the finite alleles setting, but our results reveal a sharp difference between the two: state-of-the-art infinite sites importance sampling proposal distributions by \cite{stephens2000} and \cite{hobolth2008} exhibit approximately exponential growth of importance weight variance with the number of sequential steps, resampling is effective at reducing Monte Carlo error, and non-uniform allocation of computational resources to different sequential steps does not improve performance.
These results demonstrate that the finite alleles and infinite sites models do not resemble each other when from the perspective of importance sampling methods.
To carry out our infinite sites simulations, we derive some new computational complexity results for the proposal distribution of \cite{hobolth2008} and show that pre-computing an explicit but large matrix reduces its complexity by an order of magnitude.
The matrix in question is independent of observed data and can be reused across all simulations not exceeding a given sample size.


The paper is structured as follows. 
In Section \ref{sect:setting} we introduce the coalescent and related sequences, including the cost sequence, and general importance sampling algorithms. 
Section \ref{sect:weak_conv} is dedicated to the convergence of general cost functionals.  
In Section \ref{sect:proposals} we describe and analyse the proposal distributions of specific importance sampling algorithms,
and, in Section \ref{sect:IS_asympt}, we analyse the asymptotic behaviour of their weights. 
Section \ref{sect:simulations} is dedicated to the simulation study and  
Section \ref{sect:proofs} contains all of the proofs. 
Section \ref{sect:discussion} concludes with a discussion of other applications and future directions of enquiry.

\section{Setting and notation}
\label{sect:setting}

\subsection{The coalescent and related sequences of interest}

Given a sample of $n$ individuals, the Kingman coalescent \citep{kingman1982b} models their genealogy backwards in time. 
Starting from the $n$ initial lineages and proceeding backwards in time, each pair of lineages coalesces at rate $1$, and each single lineage undergoes a mutation event at rate $\theta/2>0$. 
We assume there are $d$ possible genetic types, and mutations are sampled from a probability matrix $P=(P_{ij})_{i,j \in \{1, \dots, d\}}$, with  $P_{ij}$ being the forward-in-time probability of a mutation from type $i$ to type $j$.
The matrix $P$ is assumed to be irreducible so as to have a unique stationary distribution.

For a fixed sample size $n \in \mathbb{N}$, we consider the block-counting jump chain $\kc\nth=\{\kc\nth(k)\}_{k\in\mathbb{N}}\subset \mathbb{N}^d\minus$ of the typed version of the coalescent, where $H_i\nth(k)$ is the number of lineages of type $i$ after $k$ jumps in the ancestral history evolving backwards in time, and the coalescent is initialised from a starting configuration of types given by an observed sample $\n\in\mathbb{N}^d\minus$, i.e.\ $\kc\nth(0)=\n$.
The process stops when the most recent common ancestor (MRCA) of all individuals in the sample is reached at step 
\begin{equation*}
    \tau\nth:=\inf\{k\in\mathbb{N}:\norm{\kc\nth(k)}=1 | \kc\nth(0)=\n\}.
\end{equation*}

When not conditioning on $\kc\nth(0)$, the jump chain $\kc\nth$ has a tractable description as a forward-in-time process.
It starts from one ancestor in the past with a type chosen from an initial type distribution, often the stationary distribution of the mutation matrix $P$, and evolves towards the present through mutation and branching events.
The sampling probability $p(\n)$ can be thought of as the probability that this forward process is in state $\n$ at the time of the first branching event which increases its number of lineages to $\|\n\|_1 + 1$.
We record the forward and backward transition probabilities of the block-counting jump-chain $\kc\nth$ of the typed Kingman coalescent in Definition \ref{def:forward_transitions} and \ref{def:backward_transitions} below. 
See e.g.\ \cite{stephens2000,deiorio2004} for more details.

\begin{Definition}[Forward transition probabilities]
\label{def:forward_transitions}
The forward-in-time block-counting chain jumps from state $\n\in\mathbb{N}^d\minus$ to the next state $\n+\vv$ with  probability 
    \begin{equation}
    \label{eq:pforward}
    \begin{aligned}
    p(\n+\mathbf{v}|\n)
    &=
    \Prob{\kc\nth(k)=\n+\vv|\kc\nth(k+1)=\n} \\
    &=
    \begin{cases}
    \frac{\norm{\n}-1}{\norm{\n}-1+\theta} \frac{n_j}{\norm{\n}} 
    &\text{  if  }
    \mathbf{v}=\mathbf{\e}_j,
    \quad j\in \{1, \dots, d\},
    \\
    \frac{\theta }{\norm{\n}-1+\theta} \frac{n_i}{\norm{\n}}P_{ij}
    &\text{  if  }
    \mathbf{v}=\mathbf{\e}_j-\mathbf{\e}_i,
    \quad i, j \in \{1, \dots, d\},
    \\
    0 &\text{  otherwise},
    \end{cases}
    \end{aligned}
    \end{equation}
where $\e_i$ is the canonical unit vector with a one at position $i$ and zeros elsewhere.
\end{Definition} 
Note the unnatural indexing of the steps in the forward transition above, going from $k+1$ to $k$. This is chosen intentionally so that the indexing in the following backward transition goes from $k$ to $k+1$. 
In fact, throughout the paper, the indexing follows the backward-in-time direction, which is used more often. 

\begin{Definition}[Backward transition probabilities]
\label{def:backward_transitions}
The backward-in-time block-counting chain jumps from state $\n\in\mathbb{N}^d\setminus \{\boldsymbol{0}\}$ to the next state $\n-\vv$ with  probability 
\begin{equation}
\label{eq:pbackward}
\begin{aligned}
p(\n-\mathbf{v}|\n)
&=
\Prob{\kc\nth(k+1)=\n-\mathbf{v}|
	\kc\nth(k)=\n}
\\
&=
\begin{cases}
\frac{n_j(n_j -1)}{\norm{\n}( \norm{\n} -1 + \theta )} \frac{1}{\pi[j|\n-\e_j]},
&\text{  if  }
\mathbf{v}=\mathbf{\e}_j,
\quad j\in\{1,\dots, d\},
\\
\frac{\theta P_{ij} n_j }
{\norm{\n} (\norm{\n}-1 + \theta ) }
\frac{\pi[i|\n-\e_j]}{\pi[j|\n-\e_j]},
&\text{  if  }
\mathbf{v}=\mathbf{\e}_j-\mathbf{\e}_i,
\quad i, j\in\{1,\dots, d\},
\\
0, &\text{  otherwise},
\end{cases}
\end{aligned}
\end{equation}
where  $\pi[j|\n]$, $j=1, \dots, d$, 
can be interpreted as the probability of sampling an individual of type $j$ given that the first  $\norm{\n} $ sampled  individuals have types as in $\n$. In terms of the sampling probabilities, 
\begin{align*}
\pi[i|\n]= \frac{n_i+1}{\norm{\n}+1} \frac{p(\n+\e_i)}{p(\n)}.
\end{align*}
For $\y\in \frac{1}{n} \mathbb{N}^d\setminus \{\boldsymbol{0}\}$, $n\in\mathbb{N},$ it is also convenient to define 
$$\rho\nth(\mathbf{v}|\y)=p(n\y-\mathbf{v}|n\y).$$
\end{Definition} 

Note the crucial point that the backward transition probabilities are not explicitly known in general since the conditional sampling distribution $\pi[ \cdot | \n] $ is intractable, except for the following special case of parent-independent mutation. 

\begin{Remark}[Parent-independent Mutations (PIM)]
\label{remark:PIM}
Mutations are parent-independent when the type of the mutated offspring does not depend on the type of the parent, i.e.\ $P_{ij}=Q_j,i,j \in \{1,\dots,d\}$. 
In this special case, the sampling probability and the transition probabilities are explicitly known. In particular, 
    $$
    \pi[i|\n]=  
    \frac{ n_i+\theta Q_i}{\norm{\n}+\theta}.
    $$

\end{Remark}
We now briefly define two sequences which are related to the coalescent and will be a useful tool in the rest of the paper. 

\begin{Definition}[Scaled block-counting sequence]
\label{def:Yn}
The sequence of scaled block-counting Markov chains is defined as 
$\Y\nth = \frac{1}{n}\kc\nth\subset \frac{1}{n}\mathbb{N}^{d}, n\in\mathbb{N}$, 
where $n$ represents the sample size which we will take to grow to infinity. 
\end{Definition}

\begin{Definition}[Mutation-counting sequence]
\label{def:Mn}
The sequence of mutation-counting processes is defined as
$\M\nth=(M\nth_{ij})_{i,j=1}^d\subset \mathbb{N}^{d^2}, n\in\mathbb{N},$ 
where $M\nth_{ij}=\{M\nth_{ij}(k)\}_{k\in\mathbb{N}}$, with 
$M\nth_{ij}(k)$ being the cumulative number of mutations from type $i$ to type $j$ (forwards, or $j$ to $i$ backwards) that have occurred in $\Y\nth(0),\dots,\Y\nth(k)$,
i.e. 
    \begin{align*}
    M_{ij}\nth (k)= \sum_{k'=0}^{k-1}
    \mathbb{I}(n\Y\nth(k')-n\Y\nth(k'+1)=\e_j-\e_i),
    \end{align*} 
and $M_{ij}(0)=0$.

\end{Definition}

The asymptotic behaviour of the sequence $(\Y\nth,\M\nth)$, as $n\to\infty,$ was studied by \cite{favero2024}. 
In Theorem \ref{thm:weak_conv} we extend their convergence result to include a sequence $C\nth$ of costs, described in the next subsection, which we will use to analyse importance sampling weights for large sample sizes.

\subsection{The cost sequence and importance sampling}
\label{sect:intro_IS}

Given a sample $n\y_0\nth$, the sampling probability can be written as 
    \begin{align*}
    p(n\y_0\nth)=
    \mathbb{E}_p\left[\mathbb{I}(n\Y\nth(0)=n\y_0\nth)\right]. 
    \end{align*} 
A naive  way to estimate $p(n\y_0\nth)$ is to simulate independent copies of $\Y\nth$  forward in time, following Definition \ref{def:forward_transitions}, and to count how many reach sample size $n + 1$ from configuration $n\y_0\nth$. 
However, as $n$ increases, it becomes rare that a simulation hits $n\y_0\nth$, yielding an estimator with impractically high relative variance. 

The key idea for importance sampling under the coalescent is to simulate backwards, starting from configuration $n\y_0\nth$, according to a proposal distribution $q$, instead of simulating forwards according to the true distribution $p$. 
The change of measure from the forward $p$ to the backward $q$ yields 
    \begin{align*}
    p(n\y_0\nth)=
    \mathbb{E}_q\left[
    L\nth(k)
    \mid 
    n\Y\nth(0)=n\y_0\nth\right],
    \end{align*} 
where
    \begin{align}
    \label{eq:weights}
    L\nth(k)&=
    \frac{p(n\Y\nth(k),\dots,n\Y\nth(0))}{q(n\Y\nth(0),\dots,n\Y\nth(k)\mid n\Y\nth(0)=n\y_0\nth)} \nonumber \\
    &=
    p(n\Y\nth(k)) \prod_{k'=1}^{k} \frac{p(n\Y\nth(k'-1) \mid n\Y\nth(k'))}{q(n\Y\nth(k') \mid n\Y\nth(k'-1))},
    \end{align}
is the importance sampling weight, that is, the likelihood ratio or Radon--Nikodym derivative, of the change of measure. 

Note that the number of sequential steps $k$ in \eqref{eq:weights} is intentionally left general. 
When $k$ is equal to the step $\tau\nth$ at which the MRCA is reached, 
\begin{equation*}
p(n\Y\nth(\tau\nth)) = \sum_{i=1}^d p(\e_i)\mathbb{I}(n\Y\nth(k)=\e_i)
\end{equation*}
is available explicitly and \eqref{eq:weights} corresponds to the importance weight from the importance sampling algorithm with proposal distribution $q$.
Choosing
    $k < \tau\nth$ 
yields truncated algorithms, which are inexact in practice because the factor  $p(n\Y\nth(k))$ is intractable.
For our purposes, truncation is necessary for the asymptotic analysis of importance weights because our scaling limit will only describe algorithms while the number of remaining lineages is large.
In Section \ref{sect:simulations} we show empirically that despite this limitation, our analysis yields insights about algorithms without truncation.
In practice, truncated algorithms have been used for bias-variance trade-off by \cite{jasra2011}, who found that truncating at approx.\ 10\% of the sample size yielded barely perceptible bias for simple mutation models and $n= 100$.
\cite{griffiths1994simulating} also investigated a variance reduction method in which importance sampling was carried out until 2 or 3 lineages remained, and the resulting sampling distribution was computed numerically.
They found truncation at two remaining lineages to be optimal because of the computational cost of numerical methods at higher truncation levels.

The importance sampling estimator is obtained as the average of the importance sampling weights evaluated on independent copies of of $n \Y\nth$, which are simulated  backwards from $n \y_0\nth$ according to the proposal $q$. 
The second moment of this estimator can be written as 
    \begin{align*}
    s(n\y_0\nth)
    =\mathbb{E}_q\left[L\nth(k)^2\mid \Y\nth(0)=\y_0\nth\right]
    =\mathbb{E}_p\left[L\nth(k)\mid \Y\nth(0)=\y_0\nth\right] p(n\y_0\nth).
    \end{align*}
The optimal proposal distribution is the intractable true backward distribution $p$ of Definition \ref{def:backward_transitions}, which yields the zero-variance estimator with  optimal second moment $s(n\y_0\nth)=p(n\y_0\nth)^2$. 
Since optimality cannot be obtained, it is desirable that the estimator is at least asymptotically optimal, which means that it  has bounded relative error, i.e.
    \begin{align*}
    \limsup_{n\to\infty} \frac{s(n\y_0\nth)}{p(n\y_0\nth)^2}
    = \mathbb{E}_p\left[\frac{L\nth(k)}{p(n\y_0\nth)}\mid \Y\nth(0)=\y_0\nth\right]
    < \infty.
    \end{align*}
Therefore, we focus on studying the asymptotic behaviour (under the true distribution) of the normalised importance sampling weights defined as 
    \begin{align}
    \label{eq:normalised_weights}
    W\nth(k)&=  \frac{L\nth(k)}{p(n\y_0\nth)}
    =
    \frac{p(n\Y\nth(k))}{p(n\y_0\nth)} \prod_{k'=1}^{k} \frac{p(n\Y\nth(k'-1) \mid n\Y\nth(k'))}{q(n\Y\nth(k') \mid n\Y\nth(k'-1))}.
    \end{align}
The product in the display above can be thought of as a \textit{cost} as explained in the following. The product goes over the steps of the importance sampling algorithm, at each step the algorithm uses the proposal distribution $q$ in place of the true distribution $p$.  In particular, we can interpret the ratio 
\begin{equation}\label{one_step_cost}
    \frac{p(n\y \mid n\y-\vv)}{q(n\y-\vv \mid n\y)} 
    =: c_q \nth(\vv \mid \y)
\end{equation}
as a one-step cost of using the proposal $q$ in place of the true distribution $p$ in the backward step from $\y\in\frac{1}{n}\mathbb{N}^d\setminus \{\boldsymbol{0}\}$ to $\y-\frac{1}{n}\vv$, for each possible step $\vv \in \{\e_j,\e_j-\e_i\},\; i,j \in \{1,\dots,d\}$. 
Then, the importance sampling weights can be interpreted in terms of the cumulative cost of all the steps. 
More broadly, for a general form of one-step cost, we define the following cost-counting sequence. 

\begin{Definition}[Cost-counting sequence]
\label{def:Cn}
For $n\in\mathbb{N}$, let  $c\nth(\vv\mid\y)$ be a positive function of $\y\in \frac{1}{n}\mathbb{N}^d\setminus \{\boldsymbol{0}\}, \vv \in \{\e_j,\e_j-\e_i\}, \; i, j \in \{1,\dots,d\}$, representing the one-step cost of a backward jump from $\y$ to $\y-\frac{1}{n}\vv$. 
The sequence of cost-counting processes is defined as $C\nth=\{C\nth(k)\}_{k\in\mathbb{N}}\subset \mathbb{R}_+,n\in\mathbb{N}$, where $C\nth(k)$ is the cumulative cost of performing the steps $\Y\nth(0),\dots,$ $\Y\nth(k)$, i.e.  
    \begin{align*}
    C\nth(k)=  \prod_{k'=1}^k c\nth(n\Y\nth(k'-1)-n\Y\nth(k')\mid \Y\nth(k'-1) ) ,
    \end{align*}
and $C\nth(0)=1.$    
\end{Definition}

In the next section, we study the cost $C\nth$ for a general form of one-step cost functions $c\nth$. 
Then, in order to study the asymptotic behaviour of the normalised importance sampling weight $W\nth$ in \eqref{eq:normalised_weights}, we choose $c\nth$ to be of the form $c\nth_q$, as in \eqref{one_step_cost}, for an arbitrary proposal $q$ whose support coincides with $p$.
The description of well-known specific proposals is postponed to Section \ref{sect:proposals}, and the asymptotic analysis of the corresponding costs and weights is in Section \ref{sect:IS_asympt}.

\section{Asymptotic analysis of the cost sequence}
\label{sect:weak_conv}

Let us recap the initial conditions encountered so far.
\begin{Assumption}[Initial conditions] 
\label{assumption:initial}
Consider the sequence $n\y_0\nth$ of samples of large size satisfying Assumption \ref{assumption:large_sample_size}, and assume $\Y\nth(0)=\y\nth_0$. 
\end{Assumption}
Furthermore, recall that naturally we have $M_{ij}\nth(0)=0,  \forall n\in \mathbb{N}$,  $i,j \in \{1,\dots d\},$ and  $C\nth(0)=1, \forall n\in \mathbb{N}$.
In order to show convergence of the cost sequence of Definition \ref{def:Cn}, we will need the following assumption on the asymptotic behaviour of the cost of one step. 

\begin{Assumption}[Asymptotic cost of one step]
\label{assumption:costs}
There exist some continuous functions $a_j, b_{ij}, \; i,j \in \{1,\dots,d\},$ such that $b_{ij}\geq 1$, 
and
    \begin{align}
    \lim_{n\to \infty} \sup_{\y \in B_\delta\nth }  |n(c\nth(\e_j\mid \y)-1)-a_j(\y)| = 0 
    \quad \text{and} \quad
     \lim_{n\to \infty} \sup_{\y \in B_\delta\nth} |c\nth(\e_j-\e_i\mid\y)-b_{ij}( \y)|=0,
    \end{align}
for each $\delta>0$, where $B_\delta\nth = \{\y \in \frac{1}{n}\mathbb{N}^d: y_j \geq \delta, j \in \{1, \dots, d\}\}$. 
\end{Assumption}
The convergence in Assumption \ref{assumption:costs} is equivalent to uniform convergence on compact sets in the state-space of the technical framework defined in Section \ref{proof:weak_conv}.
It  will be needed for the convergence of the cost sequence, and requires knowledge of the first order approximation of the one-step cost of mutation steps as well as the second order approximation of the one-step cost of coalescence steps.

We can now state the following result, which  extends  \cite[Theorem 2.1]{favero2024}  by including the cost sequence which plays a crucial role in the study of importance sampling algorithms in the next sections. 

\begin{Theorem}[Convergence of general costs]
\label{thm:weak_conv}    
Let 
    $
    \Z\nth=(C\nth, \Y\nth,\M\nth)
    \subset \mathbb{R}_+ \times \frac{1}{n}\mathbb{N}^d\minus \times \mathbb{N}^{d^2},
    n\in\mathbb{N},
    $ 
be the sequence composed by the cost sequence $C\nth$ of Definition \ref{def:Cn}, the scaled block-counting sequence $\Y\nth$ of Definition \ref{def:Yn} evolving backwards in time, and the mutation-counting sequence $\M\nth$ of Definition \ref{def:Mn}, 
with initial conditions given by Assumption \ref{assumption:large_sample_size} and \ref{assumption:initial}.
Assume that the one-step costs satisfy Assumption \ref{assumption:costs}. 
Fix $t\in [0,1)$. 
Then, as $n\to\infty,$ the sequence of processes 
    $ \tilde{\Z}\nth= \{ \Z\nth(\lfloor{sn}\rfloor{} )  \}_{s\in[0,t]}$
converges weakly to the process 
    $ 
    \Z= \{ (C(s), \Y(s ) ,\M(s)) \}_{s\in[0,t]} 
    \subset  \mathbb{R}_+ \times \mathbb{R}_+^d \times \mathbb{N}^{d^2} 
    $, 
defined as follows.  
The state process 
$\Y=\{\Y(s)\}_{s\in[0,t]} = \{(Y_1(s), \ldots, Y_d(s)\}_{s \in [0, t]}$ is the deterministic process defined by
    \begin{align}
    \label{eq:Y}
    \Y(s)= \y_0 \left(1 -s \right);
    \end{align}
the mutation-counting process $\M=(M_{ij})_{i,j=1}^d$ is the matrix-valued process with $M_{ij}=\{M_{ij}(s)\}_{s\in[0,t]}$ being independent time-inhomogeneous Poisson processes with intensities  
    \begin{align*}
     \lambda_{ij}(\Y(s))= \frac{\theta P_{ij} Y_i(s)}{\norm{\Y(s)}^2}
     =
     \frac{\theta P_{ij} y_{0,i}}{(1-s)};
    \end{align*}
and  the cost process 
$C = \{C(s)\}_{s\in[0,t]}$ is defined by  
    \begin{align}\label{eq:cost}
    C(s) &= \exp\left\{- \int_0^s \langle a(\Y(u)),d\Y(u)\rangle +  \sum_{i,j = 1}^d \int_0^s \log b_{ij}(\Y(u)) dM_{ij}(u)\right\}
    \nonumber \\
    &=
    \exp\left\{\sum_{i=1}^d y_{0,i} \int_0^s  a_i\left(\y_0(1-u)\right)du \right\}
    \prod_{i,j = 1}^d \prod_{k=1} ^{M_{ij}(s)}
     b_{ij}\left(\y_0\left(1-T_{ij}^k\right)\right),
    \end{align}
with 
$T_{ij}^k$ being the time of the $k^{th}$ jump of the process $M_{ij}$. 
\end{Theorem}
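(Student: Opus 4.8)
The plan is to build on \cite[Theorem 2.1]{favero2024}, which already gives weak convergence of $(\Y\nth,\M\nth)$ to $(\Y,\M)$ with $\Y$ deterministic and the $M_{ij}$ independent time-inhomogeneous Poisson processes of the stated intensities. The task is to append the cost coordinate $C\nth$ and show joint convergence to $\Z=(C,\Y,\M)$. I would follow the two-stage structure advertised in the introduction: first treat parent-independent mutation (PIM), where all transition probabilities are explicit, and then transfer to general recurrent mutation by a Radon--Nikodym change of measure between the PIM and general block-counting chains, checking that the change-of-measure density is asymptotically negligible (or converges to an explicit multiplicative factor that can be absorbed). The reason this helps is that in the PIM case one can write $C\nth$ explicitly and control it by elementary estimates; the change of measure then carries the conclusion to the general case without re-deriving tightness from scratch.

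The core of the argument is a law-of-large-numbers / exponential-functional computation for $\log C\nth(\lfloor sn\rfloor)$. Writing
\[
\log C\nth(k) = \sum_{k'=1}^{k} \log c\nth\bigl(n\Y\nth(k'-1)-n\Y\nth(k')\mid \Y\nth(k'-1)\bigr),
\]
I would split the sum according to whether step $k'$ is a coalescence ($\vv=\e_j$) or a mutation ($\vv=\e_j-\e_i$). For coalescence steps Assumption \ref{assumption:costs} gives $\log c\nth(\e_j\mid\y) = \tfrac1n a_j(\y) + o(1/n)$ uniformly on $B_\delta\nth$, so the coalescence contribution is a Riemann-type sum $\tfrac1n\sum_{k'} a_{J_{k'}}(\Y\nth(k'-1))$ over the $\approx sn$ coalescence steps; since $\Y\nth(\lfloor un\rfloor)\to\y_0(1-u)$ uniformly and the empirical frequency of ``type-$j$ coalescence'' among steps converges to $y_{0,j}/\norm{\Y(u)}=y_{0,j}$ (the MRCA is reached at $s=1$, and there are asymptotically no mutation steps on the macroscopic scale, by the finiteness of the limiting Poisson processes), this sum converges to $\sum_j \int_0^s y_{0,j}\,a_j(\y_0(1-u))\,du = -\int_0^s\langle a(\Y(u)),d\Y(u)\rangle$. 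For mutation steps, Assumption \ref{assumption:costs} gives $\log c\nth(\e_j-\e_i\mid\y)\to\log b_{ij}(\y)$, and there are only $O(1)$ such steps (governed by $M\nth$), so their contribution converges to $\sum_{i,j}\int_0^s \log b_{ij}(\Y(u))\,dM_{ij}(u) = \sum_{i,j}\sum_{k=1}^{M_{ij}(s)}\log b_{ij}(\y_0(1-T_{ij}^k))$. Combining gives exactly \eqref{eq:cost}. Joint convergence with $(\Y\nth,\M\nth)$ follows because $C\nth$ is, in the limit, a continuous functional of $(\Y,\M)$ together with the Riemann-sum term that is itself a deterministic continuous functional of $\Y$; one can make this rigorous via the Skorokhod representation theorem (work on a probability space where $(\Y\nth,\M\nth)\to(\Y,\M)$ almost surely, and show $C\nth(\lfloor sn\rfloor)\to C(s)$ a.s., uniformly on $[0,t]$), or by an explicit continuous-mapping argument in the Skorokhod topology on $D([0,t])$.

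The main obstacle is the region where some coordinate $Y_j\nth$ is small, i.e.\ where $\y$ escapes $B_\delta\nth$ and Assumption \ref{assumption:costs} gives no control over $c\nth$. This is precisely why the theorem restricts to $t\in[0,1)$: for $s\le t<1$ the limiting path $\Y(s)=\y_0(1-s)$ stays bounded away from $0$ in every coordinate that is positive in $\y_0$, and by the uniform convergence $\Y\nth(\lfloor sn\rfloor)\to\Y(s)$ one can argue that with probability tending to $1$ the chain stays in $B_\delta\nth$ for a suitable $\delta=\delta(t)>0$ on the whole time interval $[0,t]$; on that event all the estimates above apply. I would make this quantitative using the PIM representation: bound the probability that $\min_j Y_j\nth(\lfloor sn\rfloor)$ drops below $\delta$ before time $t$, e.g.\ via the explicit PIM transition probabilities and a union bound over the finitely many types, and then transfer this event estimate to the general case through the change of measure (controlling the likelihood ratio on this high-probability event). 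A secondary technical point is the accumulation of the $o(1/n)$ errors in the coalescence sum: there are $O(n)$ terms, so one needs the convergence in Assumption \ref{assumption:costs} to be uniform (which it is, on $B_\delta\nth$) to conclude the total error is $o(1)$. Finally, tightness of $\tilde\Z\nth$ in $D([0,t])$ follows from tightness of $(\Y\nth,\M\nth)$ (already established in \cite{favero2024}) together with the representation of $\log C\nth$ as a sum of two pieces — a uniformly bounded-variation piece controlled by the $a_j$ and a pure-jump piece with $O(1)$ jumps controlled by the $b_{ij}$ — each of which is tight on the event that the chain stays in $B_\delta\nth$.
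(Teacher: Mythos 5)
Your proposal is essentially correct, but it takes a genuinely different route from the paper. The paper does not treat $C\nth$ as a pathwise functional of the chain: it extends the operator-theoretic framework of \citet{favero2024} by adjoining the cost coordinate to a compactified state space in which the boundary $\Omega_0$ is pushed to infinity, proves uniform convergence of the generators $A\nth \eta_n f \to \eta_n A f$ for smooth compactly supported $f$ in the parent-independent case, deduces semigroup convergence and weak convergence via the Ethier--Kurtz machinery (with a cemetery point), and only then removes the PIM restriction by the change of measure of \citet{favero2024}, whose likelihood ratio does not involve the cost coordinate. Your route instead takes the known convergence of $(\Y\nth,\M\nth)$ under general mutation as input and argues that $\log C\nth(\lfloor sn\rfloor)$, being a deterministic functional of the $\Y\nth$-path, converges by a Riemann-sum argument for coalescence steps plus a finite-sum argument for mutation steps, made rigorous via Skorokhod representation and continuous mapping; this is viable and more elementary. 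Two of your precautions are in fact unnecessary in your own framework: you never need PIM or the change of measure, since \citet{favero2024} already covers general mutation and $C\nth$ is a function of the $\Y\nth$-path, and the high-probability containment in $B_\delta\nth$ follows directly from uniform convergence of $\Y\nth$ to the continuous deterministic limit (this does require $\min_j y_{0,j}>0$, which is equally implicit in the paper's compactified framework); likewise no separate tightness check is needed once you argue almost-sure convergence on a representation space. The one step to tighten is the ``empirical frequency of type-$j$ coalescences'' claim (as written, $y_{0,j}/\norm{\Y(u)}$ should read $Y_j(u)/\norm{\Y(u)}=y_{0,j}$): rather than invoking a law of large numbers, observe that the number of type-$j$ coalescence steps up to step $k$ equals $-n\bigl(Y_j\nth(k)-Y_j\nth(0)\bigr)$ up to the $O_P(1)$ mutation steps, so the coalescence contribution is the Riemann--Stieltjes sum $-\sum_j\sum_{k'}a_j(\Y\nth(k'-1))\,\bigl(Y_j\nth(k')-Y_j\nth(k'-1)\bigr)+O_P(1/n)$, which converges to $-\int_0^s\langle a(\Y(u)),d\Y(u)\rangle$ by uniform convergence of the paths, their uniformly bounded variation, and continuity of $a$. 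What the paper's approach buys is the generator characterisation of the limit (used for the heuristics of Section \ref{sect:heuristic_conv} and reused for Proposition \ref{prop:weak_conv_proposals}) and systematic boundary control; what yours buys is a shorter proof that avoids redoing the semigroup construction.
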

\begin{proof}
See Section \ref{proof:weak_conv}.
\end{proof}
Here, converging weakly means converging in the Skorokhod space  $\mathcal{D}_{\mathbb{R}_+^d \times \mathbb{N}^{d^2}\times \mathbb{R}_+} [0,t]$. That is, for  any 
bounded continuous real-valued function $g$ on  $\mathcal{D}_{\mathbb{R}_+^d \times \mathbb{N}^{d^2}\times \mathbb{R}_+} [0,t]$, it yields
    \begin{equation*}
    \lim_{n\to\infty} 
    \E{ g\left(
    \{ \tilde{\Z}\nth(s )  \}_{s\in[0,t]}
    \right)}
    = \E{g\left(
    \{ \Z(s )  \}_{s\in[0,t]}
    \right)}.
    \end{equation*}

\subsection{Heuristic explanation of the convergence}
\label{sect:heuristic_conv}

The one-step transition probabilities of the Markov chain $\Z\nth$, started from a state $(c,\y,\m)\in \mathbb{R}_+ \times \frac{1}{n}\mathbb{N}^d\minus \times \mathbb{N}^{d^2}$, are
\begin{align*}
    &\mathbb{P}(\Z_{k+1}\nth = (c', \y', \m') | \Z_k\nth = (c, \y, \m))\\
    &=
    \begin{cases}
        \rho\nth(\e_j|\y) &\text{if } (c', \y', \m') = (c\,c\nth(\e_j\mid\y), \y -\frac{1}{n}\mathbf{e}_j,\m),\\
        \rho\nth (\mathbf{e}_j-\mathbf{e}_i|\y) &\text{if } (c', \y', \m') = (c\, c\nth (\e_j-\e_i\mid\y), \y -
\frac{1}{n}\mathbf{e}_j+\frac{1}{n}\mathbf{e}_i,\m+\mathbf{e}_{ij}),
    \end{cases}
\end{align*}
where $\rho\nth$ is described in Definition \ref{def:backward_transitions}.
This can be summarised in the following operator $A\nth$, which is the infinitesimal generator of $\tilde{\Z}\nth$, 
    \begin{align}
    \label{eq:An}
    &A\nth f(c,\y, \m) \nonumber \\
    &=
    n \E{f\left(\Z\nth(k+1)\right)-f\left(\Z\nth(k)\right)\mid \Z\nth(k)=(c,\y, \m) }
    \nonumber \\
    & =
    \sum_{j=1}^{d} 
    n\left[f\left(c\, c\nth(\mathbf{e}_j|\y), \y\nth -\frac{1}{n}\mathbf{e}_j,\m\right) -f(c,\y,\m)\right]
    \rho\nth (\mathbf{e}_j|\y)
    \nonumber \\
    &\quad +
    \sum_{i,j=1}^{d}
    \left[f\left(c\, c\nth (\mathbf{e}_j-\mathbf{e}_i|\y), \y -
    \frac{1}{n}\mathbf{e}_j+\frac{1}{n}\mathbf{e}_i,\m+\mathbf{e}_{ij}\right) -f(c,\y,\m)\right]
    n \rho\nth (\mathbf{e}_j-\mathbf{e}_i|\y), 
    \end{align}
where $f$ is a function belonging to a domain to be rigorously determined.
Note the factor $n$ above, which corresponds to scaling time by $n$. 
It is known \citep{favero2022} that, if $\y\nth\to\y\in\mathbb{R}_{+}^d$, then 
    \begin{align}
    \label{eq:lim_trans_prob}
    \rho\nth(\mathbf{e}_j|\y\nth
    )
    \xrightarrow[n\to\infty]{}
    \frac{y_j}{\norm{\y}}, \qquad
    n \rho\nth(
    \mathbf{e}_j-\mathbf{e}_i |\y\nth
    )
    \xrightarrow[n\to\infty]{}
    \lambda_{ij}(\y),
    \quad\quad i,j \in \{1,\dots,d\}.
    \end{align}
Thus, using Assumption \ref{assumption:costs} and first order approximations implies that $A\nth f(c,\y\nth,\m)$ converges to
    \begin{align}
    \label{eq:A}
    A f(c,\y,\m) &=   c\  \partial_c f(c, \y,\m) 
    \left\langle a(\y), \frac{\y}{\norm{\y}}\right\rangle 
    -\left\langle \nabla_{\y} f(c,\y,\m), \frac{\y}{\norm{\y}} \right\rangle  \nonumber
    \\
    &\quad +
    \sum_{i,j=1}^{d}
    \left[f\left(c\, b_{ij}(\y), \y,\m+\mathbf{e}_{ij}\right) -f(c,\y,\m)\right]
    \lambda_{ij}(\y).
    \end{align}
The operator $A$ above is the infinitesimal generator of the limiting process $\Z=(C,\Y,\M)$ of Theorem \ref{thm:weak_conv}. 
The convergence above is made rigorous in Section \ref{proof:weak_conv}, where it is also proven that this convergence implies Theorem \ref{thm:weak_conv}. 
The crucial tools for the proof are the definition of a proper technical framework, which consists of extending the state space of the processes, and a change-of-measure argument to deal with parent-dependent mutations.  

We now give a brief intuitive explanation of how the limiting process is determined by its infinitesimal generator $A$. 
First, from \eqref{eq:A}, we directly get the following ordinary differential equation for $\Y$:
    \begin{align*}
    &d \Y(s)= - \frac{\Y(s)}{\norm{\Y(s)}} ds,
    \end{align*}
which is trivially solved by \eqref{eq:Y}. 
It is also straightforward to see from \eqref{eq:A} that $M_{ij}$  jumps up by $1$ at rate $\lambda_{ij}(\Y(s))$ independently on the other components of $\M$.   
Finally, for $C$, we get from \eqref{eq:A} the following stochastic differential equation with jumps:
    \begin{align*}
    &d C(t) = C(t) \left\langle a(\Y(t)), \frac{\Y(t)}{\norm{\Y(t)}}\right\rangle  dt 
    + 
     \sum_{i,j=1}^{d} C(t^-) (b_{ij}(\Y(t))-1)dM_{ij}(t).
    \end{align*}
Between jumps, the evolution of $C$ is determined by the drift term, which explains the exponential part of \eqref{eq:cost}. 
The product part of \eqref{eq:cost} is explained by the coefficient $C(t^-)(b_{ij}(\Y(t))-1)$ of $dM_{ij}(t)$ which represents the size of the  jump from $C(t^-)$ to $C(t^-)b_{ij}(\Y(t))$,  given that the mutation-counting process $M_{ij}$ jumps at time $t$. 

\section{Proposal distributions}
\label{sect:proposals}

In Section \ref{sect:intro_IS} the importance sampling scheme is described in terms of a general backward proposal $q$.
In this section we review two possible choices of $q$ leading to the two well-known importance sampling algorithms by \cite{griffiths1994simulating} and \cite{stephens2000}. Then, we define the corresponding cost of one step and analyse its asymptotic behaviour. 
In the next section, the one-step asymptotic results will be used for the analysis of the corresponding algorithms by using Theorem \ref{thm:weak_conv}. 

\subsection{Griffiths--Tavar\'e (GT) proposal} \label{gt_proposal}

The \cite{griffiths1994simulating} backward proposal $q_{\scriptscriptstyle GT}$ is proportional to the forward true distribution $p$ of Definition \ref{def:forward_transitions}, that is, 
    \begin{align}
    \label{eq:GT_proposal}
    q_{\scriptscriptstyle GT}(\n-\vv \mid \n) 
    = 
    \frac{p(\n\mid \n -\vv)}{\sum_{\vv'}p(\n\mid \n -\vv')}
    .
    \end{align}
Substituting the proposal $q_{\scriptscriptstyle GT}$ into \eqref{one_step_cost} shows that the cost of a backward step from $\y\in\frac{1}{n}\mathbb{N}^d\setminus \{\boldsymbol{0}\}$  does not depend on the type of step, and, for $\vv \in \{\e_j,\e_j-\e_i\}, \; i,j \in \{1,\dots,d\}$, it is equal to
    \begin{align*}
    c_{\scriptscriptstyle GT}\nth(\vv\mid\y)
    = 
    \frac{p(n\y \mid n \y-\vv)}{q_{\scriptscriptstyle GT}(n\y-\vv \mid n \y)}
    =
    \sum_{\vv'}p(n \y\mid n\y -\vv').
    \end{align*}
Furthermore, for large $n$ we have the following proposition. 
\begin{Proposition}[Asymptotic cost of one GT step]
\label{prop:GT_cost_expansion}
The cost of a backward step from configuration $\y\in\frac{1}{n}\mathbb{N}^d\setminus \{\boldsymbol{0}\}$ in the Griffiths-Tavar\'e algorithm has the following  asymptotic expansion
    \begin{align*}
    c_{\scriptscriptstyle GT}\nth(\vv\mid\y)= 1 - \frac{1}{n} \frac{d-1}{\norm{\y}} + o\left(\frac{1}{n}\right), 
    \quad
    \vv\in\{\e_j,\e_j-\e_i\}, \; i,j\in\{1,\dots,d\}.
    \end{align*}
\end{Proposition}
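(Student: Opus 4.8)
The plan is to compute the asymptotic expansion of $c_{\scriptscriptstyle GT}\nth(\vv\mid\y) = \sum_{\vv'} p(n\y \mid n\y - \vv')$ directly from the forward transition probabilities in Definition \ref{def:forward_transitions}, using the known asymptotics of the conditional sampling distribution $\pi[\cdot\mid\cdot]$. Writing $\n = n\y$ with $N = \norm{\n}$, the sum over $\vv'$ ranges over all steps $\vv' = \e_j$ (coalescence of type $j$) and $\vv' = \e_j - \e_i$ (mutation $i\to j$ backward). The quantity $p(\n \mid \n - \vv')$ is a forward transition probability, so by \eqref{eq:pforward} applied with starting state $\n - \vv'$,
\begin{align*}
\sum_{\vv'} p(\n \mid \n - \vv') = \sum_{j=1}^d p(\n \mid \n - \e_j) + \sum_{i,j=1}^d p(\n \mid \n - \e_j + \e_i),
\end{align*}
where the coalescence term uses the first branch of \eqref{eq:pforward} and the mutation term uses the second branch. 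First I would substitute the explicit formulas: the coalescence contribution from type $j$ is $\tfrac{N-2}{N-1+\theta}\cdot\tfrac{n_j - 1}{N-1}$ (starting from $\n - \e_j$, which has size $N-1$), and the mutation contribution from $i\to j$ is $\tfrac{\theta}{N-1+\theta}\cdot\tfrac{(\n - \e_j + \e_i)_i}{N-1}P_{ij}$. Wait — care is needed here because the starting configuration $\n - \vv'$ must be a valid state, and the counts shift by one; I would track these shifts carefully since they contribute at order $1/n$.

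Next I would sum. The coalescence terms sum over $j$ to give $\tfrac{N-2}{N-1+\theta}\cdot\tfrac{\sum_j(n_j-1)}{N-1} = \tfrac{N-2}{N-1+\theta}\cdot\tfrac{N-d}{N-1}$, using $\sum_j n_j = N$. The mutation terms sum over $i,j$ to give $\tfrac{\theta}{(N-1+\theta)(N-1)}\sum_{i,j}(n_i + \delta_{\text{shift}})P_{ij} = \tfrac{\theta}{(N-1+\theta)(N-1)}\big(\sum_i n_i + O(1)\big) = \tfrac{\theta}{N-1+\theta}\cdot\tfrac{N + O(1)}{N-1}$, using that $P$ is stochastic so $\sum_j P_{ij} = 1$. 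Adding the two and substituting $N = n\norm{\y}$, I would expand each rational factor to second order in $1/N$ (hence in $1/n$). The leading term should be $1$, the $1/n$ coefficient is where the $\theta$-dependence cancels and one is left with $-\tfrac{d-1}{\norm{\y}}$; the claim is uniform in the type of step $\vv$ because $c_{\scriptscriptstyle GT}\nth$ was already shown not to depend on $\vv$, so no extra uniformity argument is needed.

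The main obstacle I anticipate is bookkeeping the $O(1)$ count-shifts correctly: when evaluating $p(\n \mid \n-\vv')$ one starts from $\n - \vv'$, so the "$n_i$" appearing in \eqref{eq:pforward} is really $(\n - \vv')_i$, which equals $n_i$, $n_i - 1$, or $n_i + 1$ depending on the step. These $\pm 1$ corrections do not affect the leading order but must be handled to get the $1/n$ coefficient right — in particular to see that the $\theta$ contributions from the coalescence prefactor expansion and the mutation sum conspire to cancel, leaving only the purely combinatorial term $-(d-1)/\norm{\y}$. I would organize this by writing $N - 1 + \theta$, $N-1$, $N-2$ all as $N(1 + O(1/N))$ with explicit constants, multiply out, and collect the $1/N$ terms; the $o(1/n)$ remainder is then immediate from the geometric expansion of the bounded rational factors. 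This is a routine but slightly delicate finite computation; no convergence theorem or measure-change is required here, only algebra.
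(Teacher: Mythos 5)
Your strategy is exactly the paper's: write $c_{\scriptscriptstyle GT}\nth(\vv\mid\y)=\sum_{\vv'}p(n\y\mid n\y-\vv')$, substitute the explicit forward probabilities of Definition \ref{def:forward_transitions} evaluated at the shifted state $n\y-\vv'$, sum using stochasticity of $P$, and expand in $1/n$; no convergence theorem or measure change is needed, and the paper's proof is precisely this computation. (Incidentally, the ``known asymptotics of $\pi$'' you invoke at the start never enter: the GT cost involves only the explicit forward probabilities.)

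There is, however, one slip in your displayed one-step formula that sits exactly at the order the proposition is about. Starting from $\n-\e_j$, which has $N-1$ lineages (writing $N=n\norm{\y}$), Definition \ref{def:forward_transitions} gives the coalescence contribution
\begin{equation*}
p(\n\mid\n-\e_j)\;=\;\frac{N-2}{N-2+\theta}\cdot\frac{n_j-1}{N-1},
\end{equation*}
whereas you wrote $\frac{N-2}{N-1+\theta}\cdot\frac{n_j-1}{N-1}$. The discrepancy is a relative factor $1-\frac{1}{N-1+\theta}$, i.e.\ an $O(1/N)$ error that is \emph{not} absorbed into $o(1/n)$: summed over $j$, the correct coalescence total is $1-\frac{\theta+d-1}{N}+O(N^{-2})$, while your version gives $1-\frac{\theta+d}{N}+O(N^{-2})$, so after adding the mutation contribution $\frac{\theta}{N}+O(N^{-2})$ you would land on $1-\frac{1}{n}\frac{d}{\norm{\y}}$ instead of the claimed $1-\frac{1}{n}\frac{d-1}{\norm{\y}}$. (Your mutation term has a similar cosmetic slip, $N-1$ in place of $N$ in the second denominator, but that one really is $O(N^{-2})$ and harmless, as are the simplified forms the paper itself uses in its own computation.) Since you state the correct final coefficient and the correct cancellation mechanism—$\theta$ cancels between the coalescence and mutation sums, leaving the purely combinatorial $-(d-1)/\norm{\y}$—this reads as a transcription error rather than a wrong idea; but as written the display would not deliver the proposition. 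Fix the prefactor to $\frac{N-2}{N-2+\theta}$ and the rest of your plan goes through and coincides with the paper's proof.
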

\begin{proof}
The calculations are reported in Section \ref{proof:GT_cost_expansion}.     
\end{proof}

\subsection{Stephens--Donnelly (SD) proposal}
\cite{stephens2000} derived a proposal of the form
    \begin{align}
    \label{eq:SD_proposal}
    q_{\scriptscriptstyle SD}(\n-\mathbf{v}|\n)
    =&
    \begin{cases}
    \frac{n_j(n_j -1)}{\norm{\n}( \norm{\n} -1 + \theta )} \frac{1}{\hat{\pi}[j|\n-\e_j]},
    &\text{  if  }
    \mathbf{v}=\mathbf{\e}_j,
    \quad j\in\{1\dots d\},
    \\
    \frac{\theta P_{ij} n_j }
    {\norm{\n} (\norm{\n}-1 + \theta ) }
    \frac{\hat{\pi}[i|\n-\e_j]}{\hat{\pi}[j|\n-\e_j]},
    &\text{  if  }
    \mathbf{v}=\mathbf{\e}_j-\mathbf{\e}_i,
    \quad i, j=\{1,\dots. d\},
    \\
    0, &\text{  otherwise},
    \end{cases}
    \end{align}
where  $\hat{\pi}[j|\n]$, $j=1, \dots, d$, is a family of probability distributions on the space of types. 
In fact, the optimal proposal corresponds to the true backward distribution $p$ of Definition \ref{def:backward_transitions}, which matches the formula above when $\hat{\pi}$ is replaced by $\pi$. 
Since $\pi$ is not known explicitly, except for the case of parent-independent mutation (c.f.\ Remark \ref{remark:PIM}), \cite{stephens2000} propose the following approximation of $\pi$:
    \begin{align*}
    \hat{\pi}[j\mid \n ]
    =
    \sum_{i=1}^d \frac{n_i}{\norm{\n}+\theta}
    \sum_{m=0}^\infty \left(\frac{\theta}{\norm{\n}+\theta}\right)^m(P^m)_{ij},
    \quad j \in \{1,\dots,d\},
    \end{align*}
or equivalently, 
    \begin{align*}
     \hat{\pi}[\cdot|\n]=
     \frac{\n}{\norm{\n}+\theta}
     \left(I-
     \frac{\theta P}{\norm{\n}+\theta}
     \right)^{-1}.
    \end{align*}
Therefore, under the proposal $q_{\scriptscriptstyle SD}$, in the scaled framework, the cost of a backward step from $\y\in\frac{1}{n}\mathbb{N}^d\setminus \{\boldsymbol{0}\}$ to $\y-\frac{1}{n}\vv$ is given by 
    \begin{align*}
    c_{\scriptscriptstyle SD}\nth(\vv\mid\y)
    = 
    \frac{p(n\y \mid n \y-\vv)}{q_{\scriptscriptstyle SD}(n\y-\vv \mid n \y)}
    =
    \begin{cases}
    \hat{\pi}[j|n\y-\e_j] \frac{\norm{\y}}{y_j}
    ,
    &\text{  if  }
    \mathbf{v}=\mathbf{\e}_j,
    \quad j \in \{1\dots d\},
    \\
    \frac{\hat{\pi}[j|\n-\e_j]}{\hat{\pi}[i|\n-\e_j]}
    \frac{n y _i -1 + \delta_{ij} }{ny_j },
    &\text{  if  }
    \mathbf{v}=\mathbf{\e}_j-\mathbf{\e}_i,
    \quad i, j \in \{1, \dots, d\},
    \\
    0, &\text{  otherwise}.
    \end{cases}
    \end{align*}
For large $n$ we have the following proposition.
\begin{Proposition}[Asymptotic cost of one SD step]
\label{prop:SD_cost_expansion}
The probability $\hat{\pi}$ of the Stephens-Donnelly proposal distribution has the following  asymptotic expansion
 \begin{align*}
    \hat{\pi}[i\mid n\y-\e_j ]
    =
    \frac{y_i}{\norm{\y}} 
     +
    \frac{1}{n}
    \frac{1}{\norm{\y}}
    \left[
    \frac{y_i(1-\theta)}{\norm{\y}}
     -\delta_{ij}
    +
    \sum_{i'=1}^d  \frac{y_{i'}}{\norm{\y}} \theta P_{i'i}
    \right]
    +o\left(\frac{1}{n}\right),
    \quad
    i,j \in \{1,\dots,d\}.
    \end{align*}
The cost of a backward step from configuration $\y\in\frac{1}{n}\mathbb{N}^d\setminus \{\boldsymbol{0}\}$ in the Stephens-Donnelly algorithm has the following  asymptotic expansion
    \begin{align*}
    &c_{\scriptscriptstyle SD}\nth(\e_j\mid\y)= 1 + \frac{1}{n} \hat{a}_j(\y) + o\left(\frac{1}{n}\right), 
    \quad
    j \in \{1,\dots,d\},
    \end{align*}
where
    \begin{align*}
    \hat{a}_j(\y)
    = \frac{1-\theta}{\norm{\y}}
    -\frac{1}{y_j}\left(1-\sum_{i=1}^d\frac{y_i}{\norm{\y}}\theta P_{ij}\right),
    \end{align*}
and
    \begin{align*}
    &c_{\scriptscriptstyle SD}\nth(\e_i-\e_j\mid\y)= 1 + o(1), \quad i,j \in \{1,\dots,d\}.
    \end{align*}
\end{Proposition}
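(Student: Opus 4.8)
The plan is to reduce everything to a Neumann--Taylor expansion of the closed form $\hat\pi[\cdot\mid\n]=\frac{\n}{\norm{\n}+\theta}\bigl(I-\frac{\theta P}{\norm{\n}+\theta}\bigr)^{-1}$ evaluated at $\n=n\y-\e_j$, and then substitute into the two formulas for $c_{\scriptscriptstyle SD}\nth$ already recorded above. Write $N:=n\norm{\y}$, so that $\norm{\n}+\theta=N-1+\theta$. On $B_\delta\nth$ one has $\norm{\y}\geq d\delta$, hence, since $P$ is row-stochastic, $\|\theta P/(N-1+\theta)\|_\infty\leq\theta/(N-1+\theta)<1$ for all large $n$, uniformly over $B_\delta\nth$, so the Neumann series converges and
\begin{equation*}
\left(I-\frac{\theta P}{N-1+\theta}\right)^{-1}=I+\frac{\theta P}{N-1+\theta}+R_n,\qquad \|R_n\|_\infty=O(N^{-2})
\end{equation*}
uniformly on $B_\delta\nth$. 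Combining this with $\frac{1}{N-1+\theta}=\frac1N+\frac{1-\theta}{N^2}+O(N^{-3})$ and $\n_i=ny_i-\delta_{ij}$, and keeping terms up to order $1/n$, the order-$1$ term of $\hat\pi[i\mid n\y-\e_j]$ is $y_i/\norm{\y}$, while the $1/n$ term collects exactly three contributions: $-\delta_{ij}/\norm{\y}$ from the unit shift in $\n$, $y_i(1-\theta)/\norm{\y}^2$ from the correction to $1/(N-1+\theta)$, and $\norm{\y}^{-1}\sum_{i'}\frac{y_{i'}}{\norm{\y}}\theta P_{i'i}$ from the first Neumann term; together these are the claimed bracket. (Equivalently, one may start from the series $\hat\pi[i\mid\n]=\sum_{i'}\frac{n_{i'}}{\norm{\n}+\theta}\sum_{m\geq0}\bigl(\frac{\theta}{\norm{\n}+\theta}\bigr)^m(P^m)_{i'i}$ and note that only $m=0,1$ contribute at order $1/n$.)

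With the $\hat\pi$ expansion in hand, the cost expansions are immediate substitutions. For coalescence steps I would insert the expansion with $i=j$ into $c_{\scriptscriptstyle SD}\nth(\e_j\mid\y)=\hat\pi[j\mid n\y-\e_j]\,\norm{\y}/y_j$: the leading terms cancel to $1$ and the $O(1/n)$ term becomes $\frac{1}{y_j}\bigl[\frac{y_j(1-\theta)}{\norm{\y}}-1+\sum_i\frac{y_i}{\norm{\y}}\theta P_{ij}\bigr]$, which rearranges to $\hat a_j(\y)=\frac{1-\theta}{\norm{\y}}-\frac{1}{y_j}\bigl(1-\sum_i\frac{y_i}{\norm{\y}}\theta P_{ij}\bigr)$. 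For mutation steps only the leading order is needed: in $c_{\scriptscriptstyle SD}\nth(\e_j-\e_i\mid\y)=\frac{\hat\pi[j\mid n\y-\e_j]}{\hat\pi[i\mid n\y-\e_j]}\cdot\frac{ny_i-1+\delta_{ij}}{ny_j}$ the ratio of conditional sampling probabilities tends to $(y_j/\norm{\y})/(y_i/\norm{\y})=y_j/y_i$ and the remaining fraction tends to $y_i/y_j$, so the product tends to $1$, i.e.\ $c_{\scriptscriptstyle SD}\nth(\e_j-\e_i\mid\y)=1+o(1)$ (after relabelling $i\leftrightarrow j$ this is the stated form for $\e_i-\e_j$).

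I expect the only real work to be bookkeeping rather than anything conceptual: one must keep careful track of which $O(1/n)$ contribution comes from the unit shift $n\y\mapsto n\y-\e_j$ in $\n$, which from the $-1+\theta$ in $\norm{\n}+\theta$, and which from the first-order Neumann term, and one must check that all remainders are $o(1/n)$ \emph{uniformly} on $B_\delta\nth$ --- this is where the lower bounds $y_j\geq\delta$ and $\norm{\y}\geq d\delta$ are used, to control $1/y_j$, $1/\norm{\y}$ and the rate of decay of the Neumann tail $R_n$. With these expansions established, Assumption \ref{assumption:costs} holds for the Stephens--Donnelly proposal with $a_j=\hat a_j$ and $b_{ij}\equiv1$, which is exactly what is required to apply Theorem \ref{thm:weak_conv} to this algorithm in the next section.
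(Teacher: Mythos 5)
Your proposal is correct and follows essentially the same route as the paper: expand $\hat{\pi}[i\mid n\y-\e_j]$ by keeping only the $m=0,1$ terms of the geometric series (your Neumann-inverse formulation is just the matrix form of the same expansion, with the tail bound making the $o(1/n)$ remainder explicit), expand $1/(n\norm{\y}-1+\theta)$ to second order, collect the three $1/n$ contributions, and substitute into the recorded cost formulas. Your added attention to uniformity on $B_\delta\nth$ and the explicit rearrangement yielding $\hat{a}_j$ are fine elaborations of steps the paper leaves implicit ("from which the result follows").
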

\begin{proof}
The calculations are reported in Section \ref{proof:SD_cost_expansion}. 
\end{proof}
Note that, in Proposition \ref{prop:SD_cost_expansion}, we only report the first order asymptotic expansion for the cost of a mutation step because that is what we need in the next section in order to apply Theorem \ref{thm:weak_conv}. 

\section{Asymptotic analysis of importance sampling algorithms}
\label{sect:IS_asympt}

Now that we know the asymptotic behaviour of the one-step costs in the GT and SD algorithms, we are able to study the asymptotic behaviour of the corresponding importance sampling weights by employing Theorem \ref{thm:weak_conv}. 

\begin{Remark}[Truncation]\label{remark:truncation}
For each $n\in\mathbb{N}$, we analyse truncated algorithms starting from a sample of the form $n\y_0\nth$ satisfying Assumption \ref{assumption:large_sample_size}, and stopping after $k=\lfloor{tn}\rfloor{}$ steps, for a fixed $t\in(0,1)$.
To get an intuition about the extent of the truncation, consider the following. 
For large $n$, the starting sample size is $n\norm{\y\nth_0}\approx n\norm{\y_0}= n$.
After $\lfloor{tn}\rfloor{}$ steps, the sample size is reduced to approximately $n \norm{\Y\nth(\lfloor{tn}\rfloor{})}$, where $\Y\nth$ follows the proposal distribution. 
The latter is approximately $n \norm{\Y(t)} =n (1-t)$, as justified by Proposition \ref{prop:weak_conv_proposals} below. 
This means that the number of lineages remaining when the truncated algorithm is stopped after $\lfloor t n \rfloor$ steps is approximately $n (1-t)$.
Essentially all practical algorithms are untruncated, corresponding to $t = 1$, but the technical details of our asymptotics only hold for $t < 1$.
In Section \ref{sect:simulations}, we show empirically that our results are robust to this discrepancy, in that none of the simulations we carry out use truncated algorithms.
\end{Remark}

The sequence of Markov chains $\Y\nth$, evolving backwards under the true distribution of Definition \ref{def:backward_transitions}, converges to the deterministic trajectory $\Y$ (see Theorem \ref{thm:weak_conv}, and \citet[Thm 2.1]{favero2024}). 
It is easy to see that the limit remains the same when $\Y\nth$ evolves according to the GT or the SD proposal, after the importance sampling change of measure. This explains the approximation in Remark \ref{remark:truncation} and is more precisely stated in the following proposition for completeness.

\begin{Proposition} \label{prop:weak_conv_proposals}

Let the scaled block-counting sequence of the coalescent $\Y\nth$ evolve  under the GT or SD proposal distribution. That is, $\Y\nth$ is defined as in \ref{def:Yn}, but with backward transition probabilities given by the GT proposal \eqref{eq:GT_proposal} or by the SD proposal \eqref{eq:SD_proposal}, rather than by Definition \ref{def:backward_transitions}. 
Let 
    $
    \Z\nth=(C\nth, \Y\nth,\M\nth)
    $ 
be constructed from $\Y\nth$ using Definitions \ref{def:Mn} and \ref{def:Cn}. Then, under Assumptions \ref{assumption:large_sample_size} and \ref{assumption:initial}, the convergence to the limiting process $\Z$ of Theorem \ref{thm:weak_conv} is valid also for the sequence $\Z\nth$ under the GT or the SD proposal distribution.
\end{Proposition}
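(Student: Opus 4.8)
The plan is to reuse the machinery already developed for the proof of Theorem~\ref{thm:weak_conv} verbatim, observing that the only input of that proof that depends on the transition mechanism is the pair of limits in \eqref{eq:lim_trans_prob}, together with Assumption~\ref{assumption:costs} on the one-step costs. So the entire task reduces to checking that, when $\Y\nth$ evolves under $q_{\scriptscriptstyle GT}$ or $q_{\scriptscriptstyle SD}$ rather than under the true backward chain of Definition~\ref{def:backward_transitions}, the analogues of \eqref{eq:lim_trans_prob} still hold \emph{with the same limits} $y_j/\norm{\y}$ and $\lambda_{ij}(\y)$, and that the costs $c_{\scriptscriptstyle GT}\nth$ and $c_{\scriptscriptstyle SD}\nth$ satisfy Assumption~\ref{assumption:costs} for suitable continuous $a_j$, $b_{ij}\geq 1$. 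The latter is essentially already done: Proposition~\ref{prop:GT_cost_expansion} gives $a_j^{\scriptscriptstyle GT}(\y) = -(d-1)/\norm{\y}$ and $b_{ij}^{\scriptscriptstyle GT}\equiv 1$ (uniformly on each $B_\delta\nth$, after checking the expansion is locally uniform), and Proposition~\ref{prop:SD_cost_expansion} gives $a_j^{\scriptscriptstyle SD}=\hat a_j$ and $b_{ij}^{\scriptscriptstyle SD}\equiv 1$; one only needs to note $b_{ij}\geq 1$ holds trivially since $b_{ij}\equiv 1$, and that the $o(1/n)$ and $o(1)$ error terms are uniform on $B_\delta\nth$ for each $\delta>0$.

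First I would record the transition probabilities under each proposal. For $q_{\scriptscriptstyle GT}$, by \eqref{eq:GT_proposal} the backward jump probability is $p(n\y\mid n\y-\vv)/\sum_{\vv'}p(n\y\mid n\y-\vv')$; using the explicit forward probabilities of Definition~\ref{def:forward_transitions} and $\sum_{\vv'}p(n\y\mid n\y-\vv') = 1 + O(1/n)$ (which is exactly the content of Proposition~\ref{prop:GT_cost_expansion}), one reads off that the coalescence probabilities converge to $y_j/\norm{\y}$ and the mutation probabilities, after multiplication by $n$, converge to $\theta P_{ij} y_i/\norm{\y}^2 = \lambda_{ij}(\y)$, i.e.\ exactly \eqref{eq:lim_trans_prob}. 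For $q_{\scriptscriptstyle SD}$, by \eqref{eq:SD_proposal} the jump probabilities have the same algebraic form as the true backward probabilities of Definition~\ref{def:backward_transitions} with $\pi$ replaced by $\hat\pi$; since Proposition~\ref{prop:SD_cost_expansion} shows $\hat\pi[i\mid n\y-\e_j] = y_i/\norm{\y} + O(1/n)$, the same computation as for the true chain (carried out in \citet{favero2024}) gives the limits $y_j/\norm{\y}$ and $\lambda_{ij}(\y)$ again. In both cases the convergences are uniform on $B_\delta\nth$.

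Next I would invoke the proof of Theorem~\ref{thm:weak_conv} as a black box. The generator computation in Section~\ref{sect:heuristic_conv}, made rigorous in Section~\ref{proof:weak_conv}, takes as input precisely: (i) the limits \eqref{eq:lim_trans_prob}, and (ii) Assumption~\ref{assumption:costs}; it then produces the limiting generator $A$ of \eqref{eq:A}, and the martingale-problem / generator-convergence argument (plus the change-of-measure step to handle parent-dependent mutation, which is itself just a Radon--Nikodym reweighting that is insensitive to which of $p$, $q_{\scriptscriptstyle GT}$, $q_{\scriptscriptstyle SD}$ drives $\Y\nth$) identifies the limit as $\Z=(C,\Y,\M)$. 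Since we have just verified (i) and (ii) hold under $q_{\scriptscriptstyle GT}$ and $q_{\scriptscriptstyle SD}$ with the \emph{same} limiting quantities, the identical argument yields weak convergence of $\tilde\Z\nth$ to the \emph{same} $\Z$. I would phrase this as: "the proof is word for word that of Theorem~\ref{thm:weak_conv}, the only changes being in the verification of \eqref{eq:lim_trans_prob} and Assumption~\ref{assumption:costs}, which we now carry out."

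The only genuinely new (if routine) work, and hence the main potential obstacle, is establishing that the asymptotic expansions in Propositions~\ref{prop:GT_cost_expansion} and~\ref{prop:SD_cost_expansion} and the transition-probability limits \eqref{eq:lim_trans_prob} are \emph{locally uniform} — i.e.\ that the remainder terms are $o(1/n)$ and $o(1)$ uniformly over $B_\delta\nth$ for every $\delta>0$ — rather than merely pointwise in $\y$. For $q_{\scriptscriptstyle SD}$ this requires controlling the Neumann-series remainder in $\hat\pi[\cdot\mid \n] = \tfrac{\n}{\norm{\n}+\theta}(I - \tfrac{\theta P}{\norm{\n}+\theta})^{-1}$ uniformly; since $\|\theta P/(\norm{\n}+\theta)\|\leq \theta/(n\delta+\theta)$ on $B_\delta\nth$, the tail is geometrically small uniformly, so this goes through. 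For $q_{\scriptscriptstyle GT}$ one just needs uniformity of $\sum_{\vv'}p(n\y\mid n\y-\vv') = 1 - (d-1)/(n\norm{\y}) + o(1/n)$, which again follows from the explicit formula and $\norm{\y}\geq \delta d$ on $B_\delta\nth$. Having stated this, I would simply refer to the already-completed calculations in Sections~\ref{proof:GT_cost_expansion} and~\ref{proof:SD_cost_expansion}, noting that they deliver the required uniformity, and conclude.
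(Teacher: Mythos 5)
Your proposal is correct and follows essentially the same route as the paper: check that $q_{\scriptscriptstyle GT}$ and $q_{\scriptscriptstyle SD}$ yield one-step transition probabilities with the same uniform limits as in \eqref{eq:lim_trans_prob} (via Proposition \ref{prop:GT_cost_expansion}, Definition \ref{def:forward_transitions} and \eqref{eq:GT_proposal}, resp.\ Proposition \ref{prop:SD_cost_expansion} and \eqref{eq:SD_proposal}), note that the costs satisfy Assumption \ref{assumption:costs} with $b_{ij}\equiv 1$, and then rerun the generator-convergence and semigroup argument of Theorem \ref{thm:weak_conv}. One small correction: the parenthetical invoking the change-of-measure step is unnecessary and, as justified ("insensitive to which of $p$, $q_{\scriptscriptstyle GT}$, $q_{\scriptscriptstyle SD}$ drives $\Y\nth$"), not really substantiated — that step exists in Theorem \ref{thm:weak_conv} only because the true backward probabilities are intractable under parent-dependent mutation, whereas the GT and SD proposal probabilities are explicit (as in the PIM case), so once you have verified the uniform limits directly the proof concludes without any change of measure, which is exactly how the paper phrases it.
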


\begin{proof}
See Section \ref{proof:weak_conv_proposals}. 
\end{proof}

The truncated algorithms are associated to the normalised importance sampling weights defined in \eqref{eq:normalised_weights}, with $k=\lfloor{tn}\rfloor{}$, which can also be written as 
    \begin{align}
    \label{eq:normalised_weights_costs}
    W\nth(k)
    =
    \frac{p(n\Y\nth(k))}{p(n\y_0\nth)} 
    C\nth (k),
    \end{align}
where $C\nth$ is the cost sequence of Definition \ref{def:Cn} with the one-step costs to be chosen to correspond to either the GT or the SD algorithm. 
The asymptotic behaviour of the weights and costs above is analysed in the following.
\begin{Theorem}[Convergence of importance sampling weights]
\label{thm:weights_convergence}
Let $W\nth_{GT}$ and $W\nth_{SD}$ be the normalised importance sampling weights, as defined in \eqref{eq:normalised_weights} or \eqref{eq:normalised_weights_costs}, of the Griffiths--Tavar\'e and the Stephens--Donnelly algorithms respectively.
Let $C\nth_{GT}$ and $C\nth_{SD}$ be the corresponding cost sequences of Definition \ref{def:Cn}. 
Fix $t\in[0,1)$.
Then,
    \begin{align*}
    \frac{p(n\Y\nth(\lfloor{tn}\rfloor{}))}{p(n\y_0\nth)}
    \xrightarrow[n\to\infty]{\mathcal{D}} 
    (1-t)^{1-d};
    \qquad
    C\nth_{GT}(\lfloor{tn}\rfloor{})
    \xrightarrow[n\to\infty]{\mathcal{D}}
    (1-t)^{d-1};
    \qquad  
    C\nth_{SD}(\lfloor{tn}\rfloor{}) 
    \xrightarrow[n\to\infty]{\mathcal{D}}
    (1-t)^{d-1}.
    \end{align*}
Therefore, 
    \begin{align*}
    W\nth_{GT}(\lfloor{tn}\rfloor{})
    \xrightarrow[n\to\infty]{\mathcal{D}}1; 
    \qquad  
    W\nth_{SD}(\lfloor{tn}\rfloor{})
    \xrightarrow[n\to\infty]{\mathcal{D}} 1 ;
    \end{align*}
where $ \xrightarrow[]{\mathcal{D}}$ represents weak convergence, i.e.\ convergence in distribution. 
\end{Theorem}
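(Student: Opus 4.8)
The plan is to reduce all three convergence statements to an application of Theorem \ref{thm:weak_conv} (via Proposition \ref{prop:weak_conv_proposals}, since here $\Y\nth$ evolves under the GT or SD proposal). First I would handle the ratio $p(n\Y\nth(\lfloor tn\rfloor))/p(n\y_0\nth)$. The key is that \citet[Thm 2.1]{favero2022} — or equivalently the first-order large-sample expansion of $p$ — gives $\log p(n\y) = n\phi(\y) + (\text{lower order})$ for some explicit functional, but more usefully one can write the ratio as a \emph{telescoping product of one-step forward ratios} $p(n\Y\nth(k'-1))/p(n\Y\nth(k'))$ along the sampled path, each of which is $1/\pi[\,\cdot\mid\cdot\,]$-type quantity up to the combinatorial prefactor in Definition \ref{def:backward_transitions}. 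Taking logs turns this into a sum of $\lfloor tn\rfloor$ terms each of size $O(1/n)$, which is exactly the structure Theorem \ref{thm:weak_conv} is built to handle: one identifies the appropriate cost function $c\nth$ whose limiting drift/jump data $(a_j,b_{ij})$ reproduce the ratio, applies the theorem to get convergence of the associated $C\nth(\lfloor tn\rfloor)$, and reads off the limit. Because $\Y\nth \to \Y(s) = \y_0(1-s)$ deterministically, the limit of the ratio is a deterministic number, and plugging $\Y(u) = \y_0(1-u)$, $\|\Y(u)\|_1 = 1-u$ into the resulting integral should yield $\exp\{-\int_0^t \frac{d-1}{1-u}\,du\} = (1-t)^{d-1}$ for the cost, hence $(1-t)^{1-d}$ for the ratio (the signs and the constant $d-1$ come from the dimension-dependent term, matching the $(d-1)/\|\y\|_1$ appearing already in Proposition \ref{prop:GT_cost_expansion}).

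Next, for $C\nth_{GT}(\lfloor tn\rfloor)$: by Proposition \ref{prop:GT_cost_expansion} the one-step GT cost is $1 - \frac1n \frac{d-1}{\|\y\|_1} + o(1/n)$ \emph{independently of the step type}, so Assumption \ref{assumption:costs} holds with $a_j(\y) = -\frac{d-1}{\|\y\|_1}$ and $b_{ij}(\y) \equiv 1$. Theorem \ref{thm:weak_conv} then gives $C\nth_{GT}(\lfloor tn\rfloor) \xrightarrow{\mathcal D} C(t)$, and since $b_{ij}\equiv 1$ the product part of \eqref{eq:cost} disappears, leaving $C(t) = \exp\{\sum_i y_{0,i}\int_0^t a_i(\y_0(1-u))\,du\} = \exp\{-\int_0^t \frac{d-1}{1-u}\,du\} = (1-t)^{d-1}$, a deterministic limit. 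For $C\nth_{SD}(\lfloor tn\rfloor)$: Proposition \ref{prop:SD_cost_expansion} gives $a_j$ replaced by $\hat a_j$ and, crucially, $c_{SD}\nth(\e_i-\e_j\mid\y) = 1 + o(1)$, i.e. $b_{ij}(\y)\equiv 1$ again, so the product part vanishes here too and only the drift integral survives. The remaining task is to verify the integral identity $\sum_i y_{0,i}\int_0^t \hat a_i(\y_0(1-u))\,du = (d-1)\log(1-t)$; substituting the formula for $\hat a_j$, using $\|\Y(u)\|_1 = 1-u$ and $\sum_j$ over the type index, the terms involving $\theta$ and $\theta P$ should cancel (since $P$ is stochastic, $\sum_{i,j} \frac{y_i}{\|\y\|_1}\theta P_{ij}$-type sums collapse), leaving exactly the same $-(d-1)/(1-u)$ integrand — this cancellation is the one genuinely computational step and I would relegate it to the proof section.

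Finally, combining: $W\nth(\lfloor tn\rfloor) = \frac{p(n\Y\nth(\lfloor tn\rfloor))}{p(n\y_0\nth)} C\nth(\lfloor tn\rfloor)$ by \eqref{eq:normalised_weights_costs}. In both the GT and SD cases all three convergences are to \emph{deterministic} limits, and the two factors are driven by the same underlying deterministic trajectory $\Y$, so the product converges (in distribution, equivalently in probability) to $(1-t)^{1-d}\cdot(1-t)^{d-1} = 1$; strictly one should note the ratio and the cost are measurable functions of the same path of $\tilde\Z\nth$ and invoke the continuous mapping theorem on the joint weak limit rather than multiplying marginals, which is legitimate precisely because the joint limit is a point mass. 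The main obstacle I anticipate is not any of the weak-convergence machinery — that is entirely delegated to Theorem \ref{thm:weak_conv} — but rather (i) correctly identifying the cost function $c\nth$ whose limit yields the \emph{ratio} $p(n\Y\nth(k))/p(n\y_0\nth)$ and checking it satisfies Assumption \ref{assumption:costs} (this needs a second-order expansion of the forward one-step ratio, analogous to what Proposition \ref{prop:GT_cost_expansion} does for GT, and one must make sure the mutation-step contribution is genuinely $o(1)$ at the relevant order so that $b_{ij}\equiv 1$ there too), and (ii) the bookkeeping in the $\hat a_j$ integral cancellation for SD. A secondary point requiring care is that Theorem \ref{thm:weak_conv} is stated for $\Y\nth$ under the true backward dynamics whereas here we need it under the GT/SD proposals; this is exactly what Proposition \ref{prop:weak_conv_proposals} supplies, so I would cite it explicitly at the outset.
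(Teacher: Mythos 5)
Your treatment of the two cost sequences and of the final combination is essentially the paper's own argument: Propositions \ref{prop:GT_cost_expansion} and \ref{prop:SD_cost_expansion} give $a_j=-(d-1)/\norm{\y}$ (resp.\ $\hat a_j$) and $b_{ij}\equiv 1$, Theorem \ref{thm:weak_conv} (via Proposition \ref{prop:weak_conv_proposals}, which you rightly cite) yields $C\nth_{GT}(\lfloor tn\rfloor),C\nth_{SD}(\lfloor tn\rfloor)\to(1-t)^{d-1}$, the SD cancellation works exactly as you sketch using row-stochasticity of $P$ and $\norm{\y_0}=1$, and multiplying the two deterministic limits through \eqref{eq:normalised_weights_costs} is legitimate.

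The genuine gap is in the first convergence, $p(n\Y\nth(\lfloor tn\rfloor))/p(n\y_0\nth)\to(1-t)^{1-d}$. You propose to telescope the ratio into one-step factors and run it through Theorem \ref{thm:weak_conv} as yet another cost functional. To do that you must verify Assumption \ref{assumption:costs} for $c\nth(\e_j\mid\y)=p(n\y)/p(n\y-\e_j)$, i.e.\ exhibit a continuous $a_j$ with $n\bigl(p(n\y)/p(n\y-\e_j)-1\bigr)\to a_j(\y)$ uniformly on compacts. That is a derivative-level (second-order) expansion of the sampling probability, equivalently a first-order expansion of the intractable conditional sampling distribution $\pi[\cdot\mid\cdot]$: the limit $a_j$ would involve the gradient of $\log\tilde p$, where $\tilde p$ is the unknown stationary density of the dual Wright--Fisher diffusion, and the paper states explicitly that neither this density nor higher-order terms of the expansion are available. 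You flag exactly this as your ``main obstacle'' but do not resolve it, so as written the first limit is not established (your ansatz $\log p(n\y)=n\phi(\y)+\cdots$ is also off in form; $p(n\y)$ decays polynomially, like $n^{1-d}$, not exponentially). The paper sidesteps the issue entirely: it applies the endpoint asymptotics of Theorem 4.3 of \citet{favero2022}, $n^{d-1}p(n\y\nth)\to\norm{\y}^{1-d}\,\tilde p\bigl(\y/\norm{\y}\bigr)$, twice --- once at $n\y_0\nth$ and once at $n\Y\nth(\lfloor tn\rfloor)$, whose weak limit $\y_0(1-t)$ has the \emph{same direction} as $\y_0$ --- so both the $n^{d-1}$ normalisation and the unknown factor $\tilde p(\y_0)$ cancel in the ratio, leaving $(1-t)^{1-d}$ with no per-step expansion needed. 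Replacing your telescoping step by this endpoint argument is what closes the gap.
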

\begin{proof}
See Section \ref{proof:weights_convergence}.
\end{proof}

Theorem \ref{thm:weights_convergence} shows that very different proposal distributions yield identical importance weights while the sample size remains large.
The performance of the GT and SD schemes is very different in practice \cite[Section 5]{stephens2000}, and Theorem \ref{thm:weights_convergence} does not imply that the performance gap between them will narrow with increasing sample size.
Instead, the interpretation is that the variance of importance weights is dominated by the proposal distribution near the root of the coalescent tree, when then number of remaining lineages is small.
In Section \ref{sect:simulations} we show that this effect is observable in practice with finite sample sizes which are representative of practical data sets.

\begin{Remark}[Convergence conditions for general proposals]\label{rmk:general_proposals}
Consider a general proposal $q^*$ corresponding to the one-step costs $c\nth_*$ of the form \eqref{one_step_cost}, with the following asymptotic expansion 
    \begin{align*}
    &c_{*}\nth(\e_j\mid\y)= 1 + \frac{1}{n} {a}^*_j(\y) + o\left(\frac{1}{n}\right), 
    \\
    &c_{*}\nth(\e_j-\e_i\mid\y)= 1+o(1).
    \end{align*} 
Then a sufficient condition on the second-order coefficients to obtain the convergence result of Theorem \ref{thm:weights_convergence} is
    \begin{align*}
    - \langle \Y(u), a^*(\Y(u))\rangle 
    = d-1.
    \end{align*}
In fact, this condition, together with Theorem \ref{thm:weak_conv}, implies 
    \begin{align*}
    W\nth_{*}(\lfloor{tn}\rfloor{})
    \xrightarrow[n\to\infty]{\mathcal{D}} 
    (1-t)^{1-d}   
    \exp\left\{ \int_0^t \sum_{i=1}^d y_{0,i} \int_0^t  a^*_i\left(\y_0(1-u)\right)du \right\}=1.
    \end{align*}
If the proposal  $q^*$ is of the SD-form, then the corresponding expansion, for the proposed approximation $\pi^*$ of $\pi$, is
    \begin{align*}
    &{\pi}^*[i\mid n\y-\e_j ] = \frac{y_i}{\norm{\y}}+o(1)
    \\&
    {\pi}^*[j\mid n\y-\e_j ] 
    = \frac{y_j}{\norm{\y}} 
    + \frac{1}{n} \tilde{a}^*_{j} (\y)
    +    o\left(\frac{1}{n}\right),
    \text{ with } 
    \tilde{a}^*_{j} (\y) =\frac{y_j}{\norm{\y}} {a}^*_j(\y),
    \end{align*}  
and the sufficient condition corresponds to
    \begin{align*}
    - \sum_{i=1}^d\tilde{a}^*_i(\Y(u)) = \frac{d-1}{\norm{\Y(u)}}.
    \end{align*} 
\end{Remark}

Remark \ref{rmk:general_proposals} gives a criterion on importance sampling proposal distributions under which importance weights do not degenerate with large sample size.
However, by itself the lack of degeneracy indicates very little about the quality of the resulting algorithm in practice.
This is evident because both the GT and SD proposals satisfying \ref{rmk:general_proposals}, yet the latter yields a far superior algorithm.
In our opinion, the practical utility of Theorem \ref{thm:weights_convergence} and Remark \ref{rmk:general_proposals} is in informing the design of particle allocation schedules and resampling schemes for a given proposal distribution, as we show by simulation in Section \ref{sect:simulations} below.

\section{Simulation study}
\label{sect:simulations}

\subsection{The finite alleles model}\label{subsection:fam}

To assess the applicability of Theorem \ref{thm:weights_convergence} to finite samples, we carried out a simulation study using the GT and SD proposals.
The code for replicating these simulations is available at \url{https://github.com/JereKoskela/treeIS}.
Runtimes were measured on a single Intel i7-6500U core.

We consider the simulated benchmark data set from Section 7.4 of \cite{griffiths1994simulating}, consisting of 50 samples and 20 sites, with 2 possible alleles at each site.
The true mutation rate is $\theta = 1/2$ per site, and each mutation flips the type of a uniformly chosen site.
We also simulated samples of size 500 and 5000 with the same number of sites and under the same mutation model.
The two larger samples are nested so that the 500 lineages are contained in the 5000 lineage sample, but both are independent of the sample of size 50.
All three samples are provided along with the simulation code.

Figure \ref{fig:cost} shows empirical normalised second moments of importance weights $W_1, \ldots, W_N$ of $N$ simulated replicates in the GT and SD algorithms as a function of the remaining number of lineages.
The empirical normalised second moment is related to the usual effective sample size (ESS) of \cite{kong1994} via
\begin{equation}\label{ess}
    \frac{\frac{1}{N}\sum_{i = 1}^N W_i^2}{\big(\frac{1}{N}\sum_{i = 1}^N W_i \big)^2} = \frac{N}{\text{ESS}}.
\end{equation}
To generate Figure \ref{fig:cost}, independent replicate coalescent trees were initialised from the observed sample, and stopped as soon as they encounter a coalescence event.
Once all replicates had been stopped, the ESS was recorded, simulation of all replicates was restarted, and the cycle of stopping replicates after each coalescence event was iterated until only one lineage remains in each replicate.
To control runtimes, the GT scheme was run using the rejection control mechanism introduced in Section 5.2 of \cite{griffiths1994simulating}, in which realisations with more than a given number of mutations are discarded.
Throughout, we set the discard threshold to 1000.

\begin{figure}[]
\centering
\begin{subfigure}[t]{0.49\textwidth}
\includegraphics[width=\textwidth]{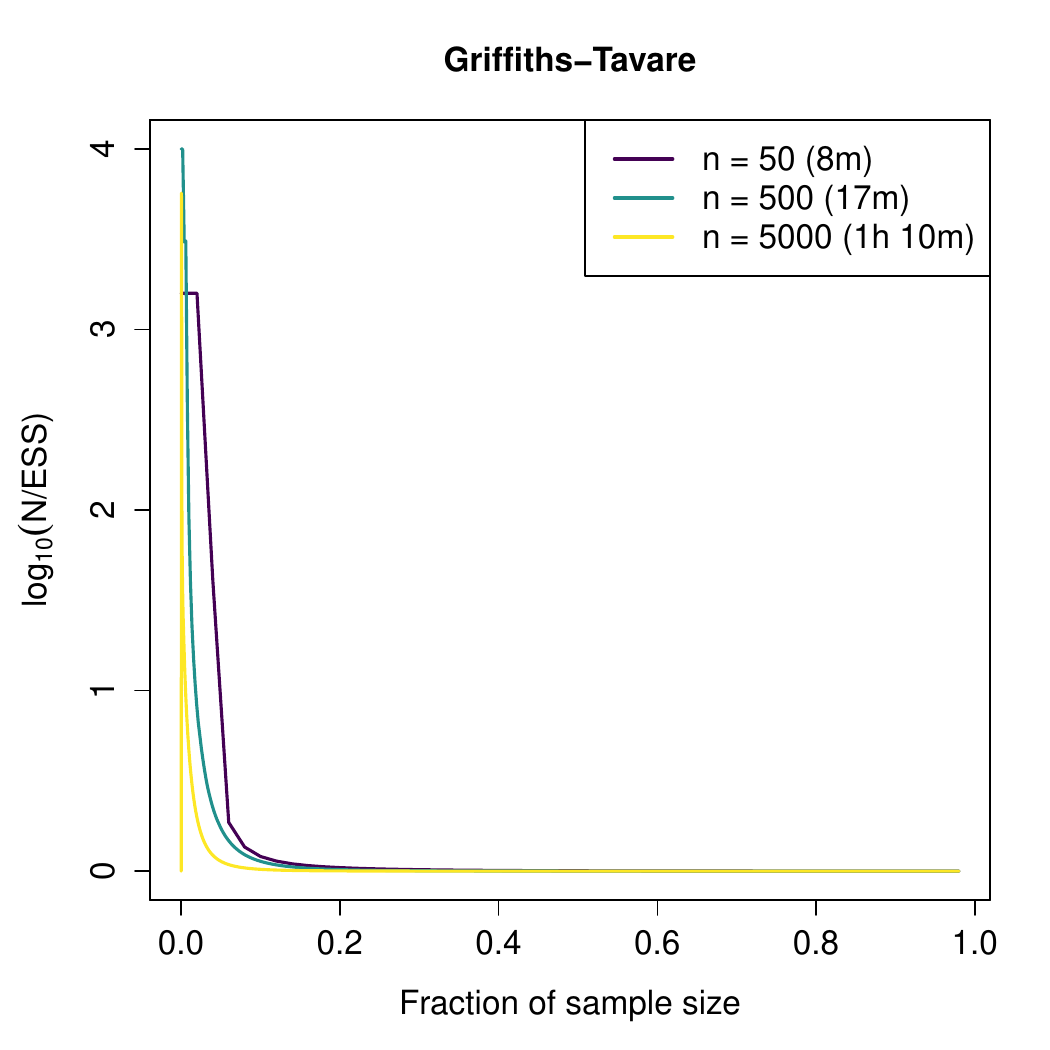}
\caption{}
\label{fig:cost:a}
\end{subfigure}
\begin{subfigure}[t]{0.49\textwidth}
\includegraphics[width=\textwidth]{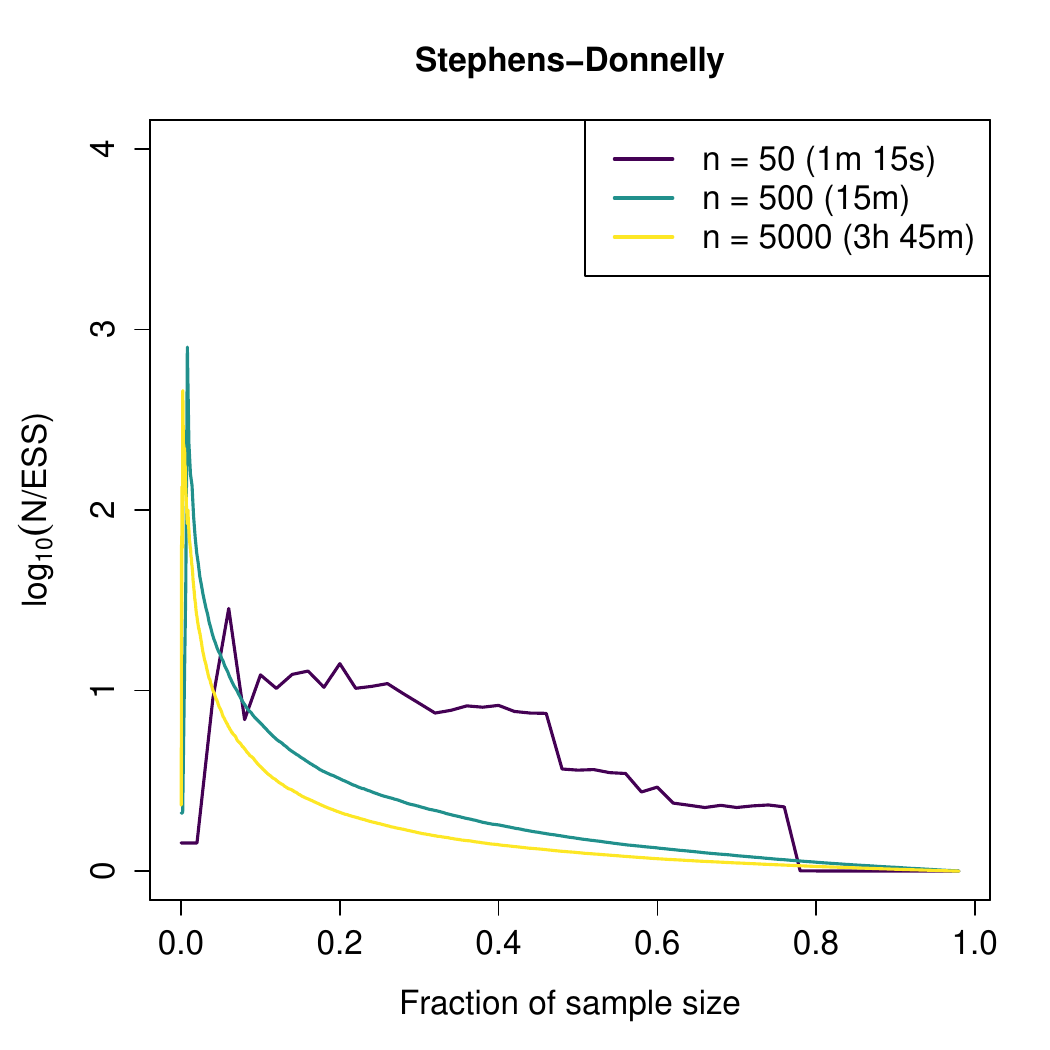}
\caption{}
\label{fig:cost:b}
\end{subfigure}
\caption{Logarithms of normalised second moments of importance weights under the GT  and SD proposals, measured by stopping replicates upon first hitting each fixed number of remaining lineages.
Each figure is an average over 10 000 replicates.}
\label{fig:cost}
\end{figure}

The importance weight variances in both algorithms are plausibly converging towards 0 as $n \to \infty$ except in a neighbourhood of the origin, where they spike sharply.
The convergence is especially rapid for the GT proposal.
Given that the SD proposal is known to be considerably more accurate than GT \cite[Section 5]{stephens2000}, the precise connection between these variance profiles and algorithm performance is not transparent.
However, while it is not an informative indicator of overall algorithm performance, the low variance of weights for large samples evident in Figure \ref{fig:cost} suggests that a small number of replicates could adequately represent the distribution of coalescent trees between the leaves and a low remaining number of lineages near the root.
This would facilitate the allocation of more replicates to the sequential steps close to the root for a given computational budget.
Allocating replicates into steps with high importance weight variance is known to be effective \citep{lee:2018}, but usually requires tuning via trial runs.
Here this optimisation can be carried out \emph{a priori}, at least heuristically.

We tested this idea using the proposal $q_{SD}$ by initialising $\gamma = 100$ independent replicate trees from the configuration of observed leaves $\n$, and simulating each until its number of remaining lineages first hit $\zeta < n$, whose value will be determined below. 
The resulting partially reconstructed trees were sampled with replacement until $\Gamma = 10 000$ replicates were obtained, which were then independently propagated until the root.

Our choice for the value of the threshold $\zeta$ is based on the ansatz that importance weights will begin to vary when the number of lineages has decreased due to coalescence by enough that mutations become commonplace.
Before that point, most proposed steps are coalescences between two lineages which share a type, and the ordering of those events is unlikely to be important.
The standard, untyped coalescent tree with $n$ leaves and mutation rate $\theta / 2$ carries an average of $\theta \log(n)$ mutations when $n$ is large \citep{watterson1975}.
The probability that a given mutation occurs while there are between $\zeta$ and $n$ lineages in the tree is
\begin{equation*}
    \frac{\sum_{j = \zeta}^n j \mathbb{E}[T_j]}{\sum_{j = 2}^n j \mathbb{E}[T_j]} = \frac{\sum_{j = \zeta}^n \frac{1}{j-1}}{\sum_{j = 2}^n \frac{1}{j-1}} \approx \frac{\log(n) - \log(\zeta)}{\log(n)},
\end{equation*}
where $T_j \sim \text{Exp}(\binom{j}{2})$ is the waiting time until the next merger when there are $j$ lineages, and both $\zeta$ and $n$ are large.
Hence, the probability that none of the $\theta \log(n)$ mutations happen before the number of lineages has fallen to $\zeta$ is approximately $( \log(\zeta) / \log(n))^{\theta \log(n)}$.
Equating this to a threshold $\chi \in (0, 1)$ gives
\begin{equation}\label{eq:cutoff}
    \zeta \equiv \zeta(n, \theta) = \lfloor n^{\chi^{1 / (\theta \log (n))}} \rfloor
\end{equation}
as the switch point between $\gamma = 100$ and $\Gamma = 10^4$ replicates.

We simulated sampling probability estimators for a range of mutation rates using the SD proposal, $\chi = 0.1$, and four different importance sampling schedules:
\begin{enumerate}
    \item $\Gamma$ independent replicates of the whole coalescent tree.
    \item $\gamma$ independent replicates of the coalescent tree while it has between $n$ and $\zeta = \lfloor n^{\chi^{1 / (\theta \log (n))}} \rfloor$ lineages, followed by $\Gamma$ replicates as described above.
    \item $\gamma$ independent replicates of the whole coalescent tree.
    \item A number of independent replicates of the whole coalescent tree equal to
    \begin{equation*}
        \frac{\Gamma \zeta(n, \theta) + \gamma (n - \zeta(n, \theta))}{n - 1} \sim \Gamma \chi^{1/\theta} + \gamma (1 - \chi^{1/\theta}).
    \end{equation*}
\end{enumerate}
The rationale for schedule 4 is that it simulates a constant number of replicates across all $n - 1$ coalescence steps while expending approximately the same total computational effort, measured as the total number of simulated coalescence steps, as schedule 2.
We neglect the random number of mutation steps when assessing computational effort because their number is not predictable under the finite alleles model.
However, mutations are rare under the SD proposal, and hence their contribution will be small with very high probability.

\begin{figure}[]
	\centering
	\includegraphics[width=0.49\textwidth]{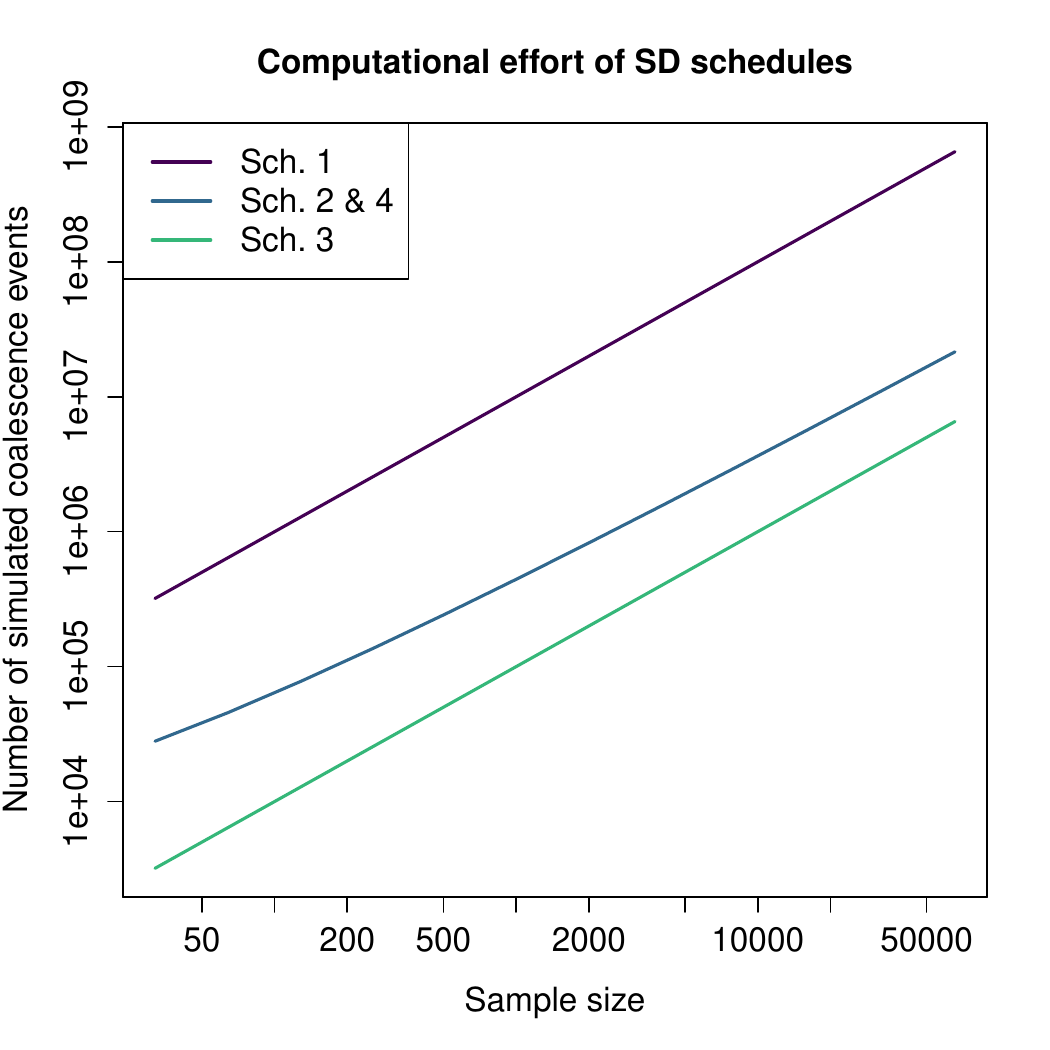}
	\caption{Number of simulated coalescence steps from the one-step proposal distribution $q_{SD}( \cdot | \cdot )$ for the four schedules with $\Gamma = 10^4$, $\gamma = 100$, $\theta = 0.5$, and $\chi = 0.1$.
		Note the log-scale on both axes.}
	\label{fig:cost-profiles}
\end{figure}

The approximate computational effort of executing all four schedules are depicted in Figure \ref{fig:cost-profiles}.
The purpose of these schedules is to assess how the number of simulated replicates interacts with the sample size $n$ and the presence or absence of resampling, and whether or not the non-uniform allocation of computational effort in schedule 2 is beneficial.
Both practical questions are motivated by Theorem \ref{thm:weak_conv}, which suggests that importance sampling methods might stabilise for large sample sizes even for a fixed number of particles, and that consequently, it is advantageous to expend more computational effort near the root of the tree where the number of remaining lineages is low.
This is in stark contrast to general importance sampling theory, which dictates that the number of replicates needs to grow exponentially in the dimension of the latent object \citep[Theorem 1.1]{chatterjee2018sample}.
Here, the dimension of the latent ancestral tree is approximately linear in sample size.

\begin{figure}[H]
\centering
\begin{subfigure}[t]{0.49\textwidth}
\includegraphics[width=\textwidth]{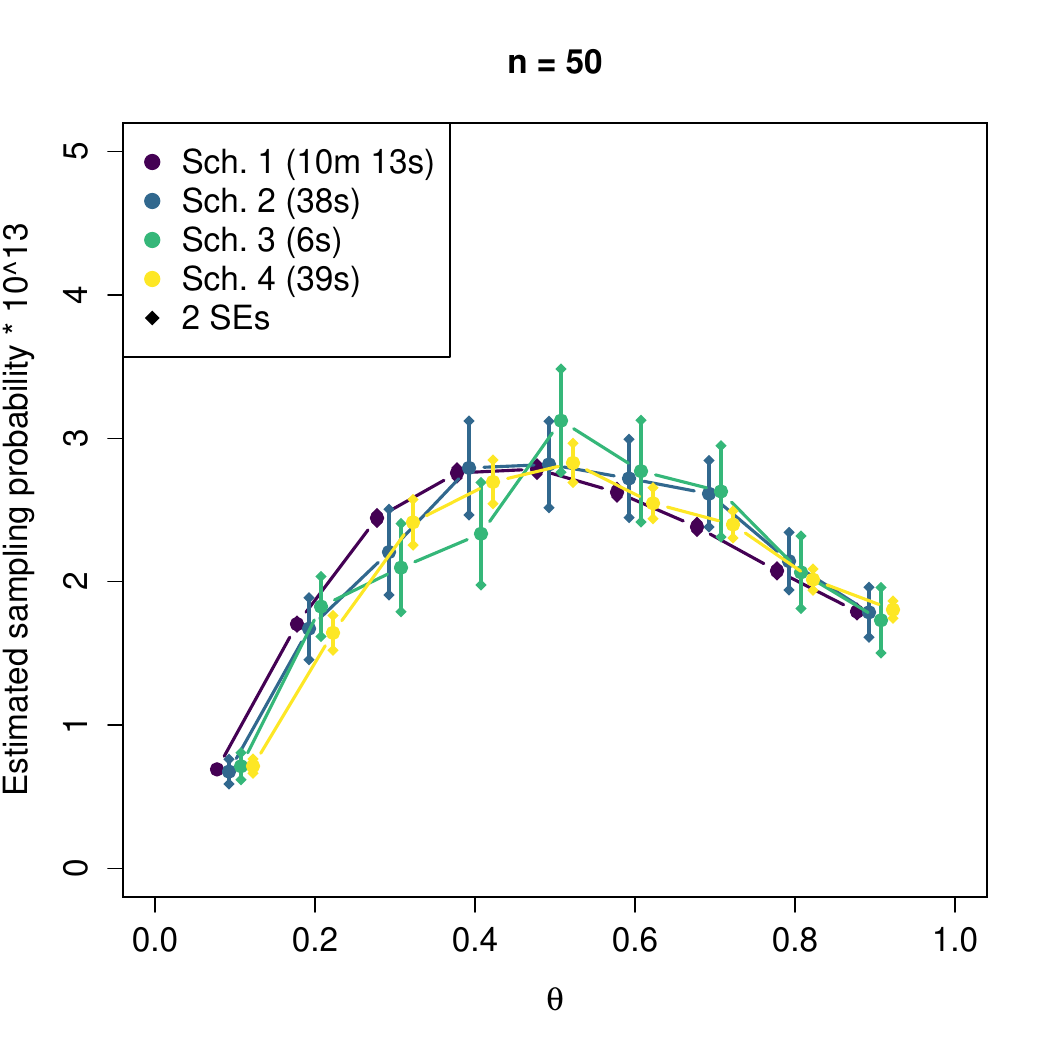}
\caption{}
\label{fig:sd-surfaces:a}
\end{subfigure}
\begin{subfigure}[t]{0.49\textwidth}
\includegraphics[width=\textwidth]{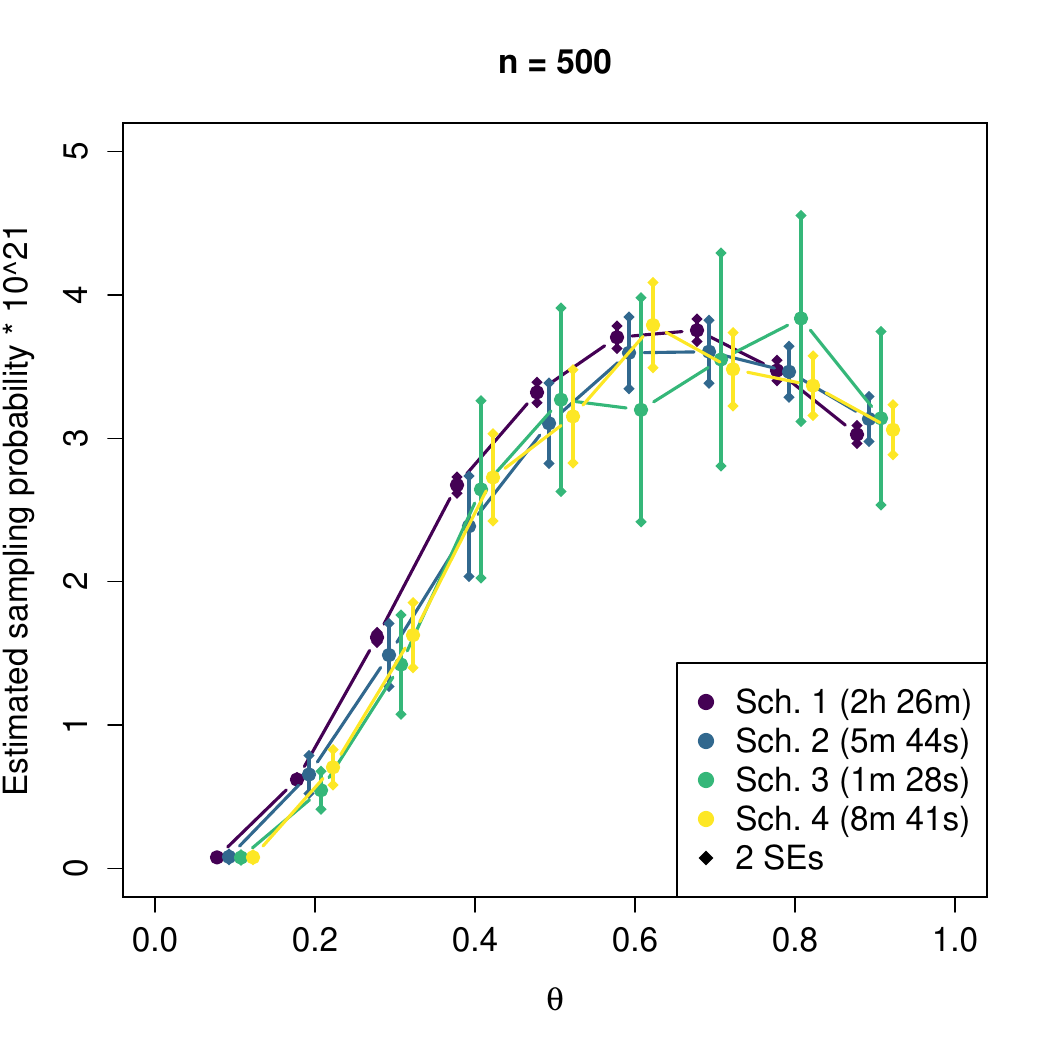}
\caption{}
\label{fig:sd-surfaces:b}
\end{subfigure}
\begin{subfigure}[t]{0.49\textwidth}
\includegraphics[width=\textwidth]{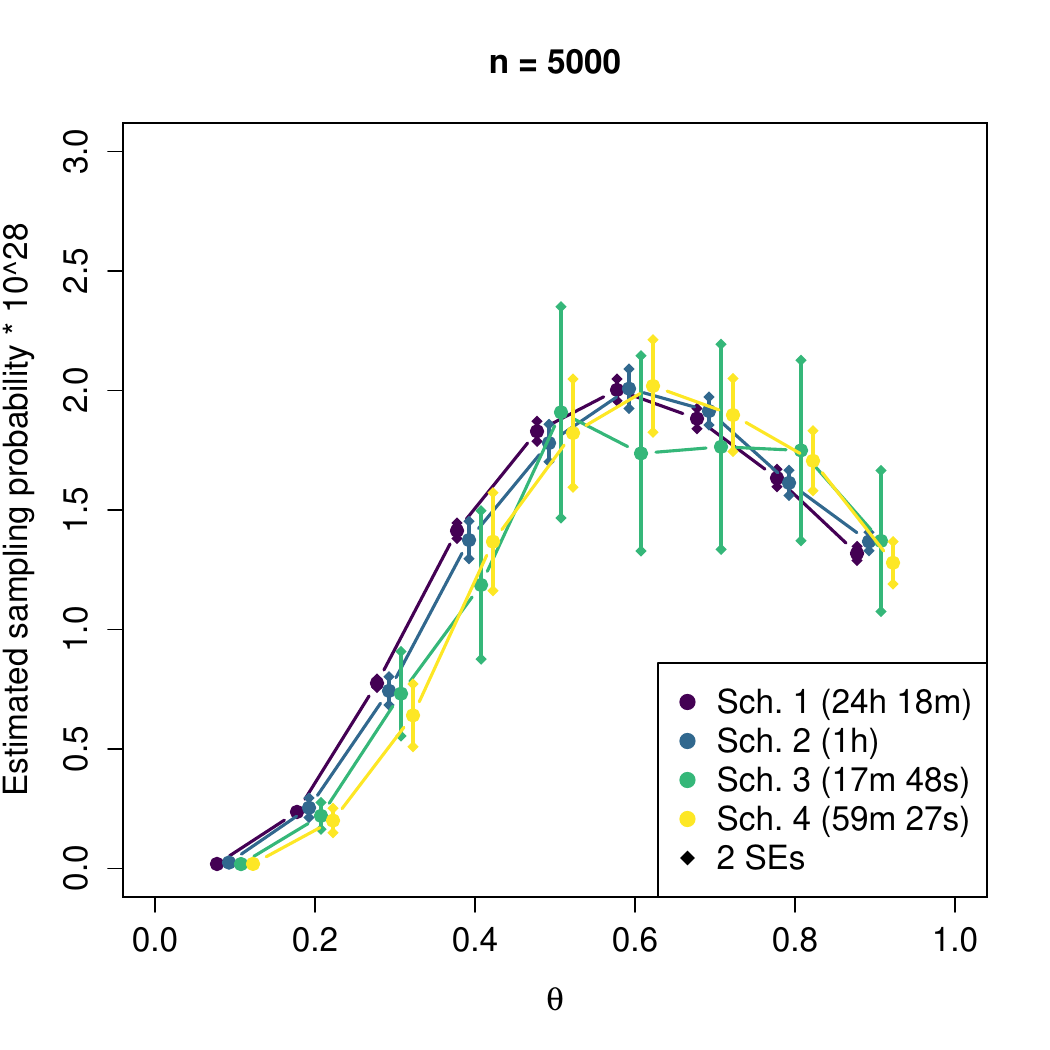}
\caption{}
\label{fig:sd-surfaces:c}
\end{subfigure}
\caption{Performance of the four schedules with $\gamma = 10^2$ and $\Gamma = 10^4$ for various sample sizes based on independent simulations at points $\theta \in \{0.1, 0.2, \ldots, 0.9\}$. 
Standard errors were computed using the method of \cite{chan:2013} for schedule 2, where replicates are not independent.
The data-generating parameter is $\theta = 0.5$.
Each y-axis is multiplied by appropriate, large constant to aid visualisation, and small horizontal offsets have been artificially added to all four curves in each panel for visual clarity.}
\label{fig:sd-surfaces}
\end{figure}

Figure \ref{fig:sd-surfaces} shows simulated sampling distribution estimators for three different sample sizes using all four schedules and a range of values of the mutation rate $\theta$.
Good performance of a schedule is indicated by a smooth graph with small standard errors.
In line with the prediction of Theorem \ref{thm:weak_conv}, there is no noticeable reduction in performance for larger sample sizes: $n = 50$ and $n = 5000$ show comparable performance with the same number of simulated replicates.
It is also clear that the $10^4$ replicates of schedule 1 are needlessly expensive for accurate sampling probability estimation.
Schedule 3 with 100 replicates is by far the fastest but somewhat noisy.
Schedule 2 is also much faster than schedule 1, and nearly as accurate.
Notably, it is both faster and slightly more accurate than schedule 4, so that the allocation of more replicates near the root at the cost of fewer replicates elsewhere is delivering a boost in accuracy.
For $n = 5000$, the most challenging case, schedules 1 and 2 are virtually indistinguishable but the latter is faster by a factor of 24, while schedules 3 and 4 have noticeably larger standard errors.

\begin{figure}[]
	\centering
	\includegraphics[width=0.49\textwidth]{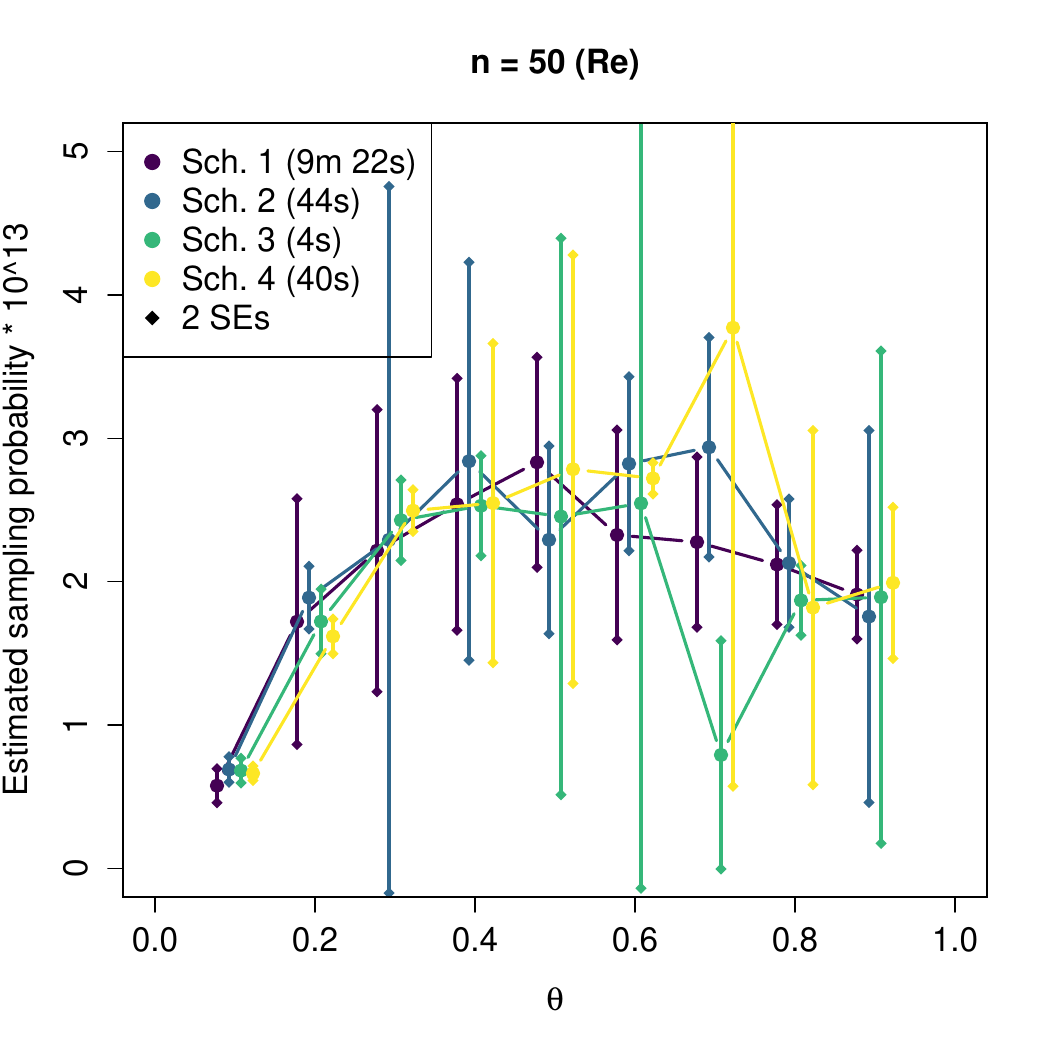}
	\caption{A repeat of the simulation in Figure \ref{fig:sd-surfaces:a} in which replicates were stopped whenever the number of lineages decreased.
		Once all replicates had stopped, systematic resampling \citep[Section 9.6]{chopin:2020} was performed if the effective sample size, ESS in \eqref{ess}, was less than 10\% of the number of replicates.
		The y-axis is expressed in units of $10^{-13}$ to aid visualisation, and small horizontal offsets have been artificially added to all four curves for visual clarity.}
	\label{fig:resampling}
\end{figure}

So far we have focused on importance sampling without resampling.
Figure \ref{fig:cost} suggests that importance weights at intermediate times are poor predictors of their final values, and invites the question of whether resampling based on importance weights is beneficial.
It is well-known that, for the coalescent, resampling partially constructed replicates after a fixed number of simulation steps is harmful \citep{fearnhead2008}.
The standard remedy is so-called stopping-time resampling, in which partially reconstructed trees are stopped when the number of remaining lineages hits a given level, and resampling is performed once all replicates have been stopped \citep{chen2005, jenkins2012stopping}.
This schedule of resampling is an exact parallel of the method of stopping replicate simulations for representative importance weight variance calculation described above Figure \ref{fig:cost}.
Figure \ref{fig:resampling} makes clear that, for the standard coalescent and the SD proposal, resampling at these stopping times can also be harmful: sampling probability estimates are noisier and have larger standard errors than those in Figure \ref{fig:sd-surfaces:a}, which depicts the same simulation without resampling.
Schedules 3 and 4 in particular depict substantial Monte Carlo error, which results in incorrect estimates of both the mean and standard error.
It is noteworthy that the method depicted in Figure \ref{fig:resampling} used a restrictive resampling threshold, $\text{ESS} < N / 10$, which typically only triggered 2--5 times per run for $N = 10^4$ particles in this example.
Thus, even a small amount of resampling had a marked, harmful effect on estimator performance.
Typical guidance on resampling for generic particle filtering and SMC is to resample when $\text{ESS} < N / 2$ \cite[Section 10.2]{chopin:2020}. 
In this case the change in threshold would make little difference since, as shown in Figure \ref{fig:cost}, importance weight variance is low for most of the run before spiking rapidly at the end.
For a less accurate proposal distribution such as GT, stopping time resampling does dramatically improve inference, and it has also been observed to be beneficial for the SD proposal under a microsatellite model of mutation \cite[Section 6]{chen2005}.

\subsection{The infinite sites model}\label{subsect:ism}

The infinite sites model (ISM) is a more analytically and computationally tractable approximation of the site-by-site description of the finite alleles model.
The genome of a lineage is associated with the unit interval $[0, 1]$, which is also taken to be the type of the MRCA.
Mutations occur along the branches of the coalescent tree with rate $\theta / 2$, and each mutation is assigned to a uniformly sampled location along the genome.
Mutations are inherited along the tree towards the leaves, so that the type of a sampled leaf is the list of mutations which occur on the branches connecting it to the MRCA.
The list of mutations carried by an individual is referred to as its \emph{haplotype}.
The infinite sites approximation prohibits the same position mutating more than once, and is a good approximation when mutations are rare and the number of sites is large.

It is convenient to describe a sample of individuals from the infinite sites model as a triple $(\mathbf{S}, \mathbf{n}, \bm{\ell})$, where $\mathbf{S}$ is a matrix which lists observed haplotypes in its rows, with multiplicities given by $\mathbf{n}$, and where the location of each mutant site is listed in $\bm{\ell}$.
If $h \leq n$ distinct haplotypes composed from a total of $r$ mutations are observed in a sample of  $n$ individuals, then $\mathbf{S}$ is an $h \times r$ matrix with $S_{i, j} = 1$ if haplotype $i$ carries mutation $j$, and 0 otherwise.
The corresponding entry $n_i$ is the number of times haplotype $S_i = ( S_{i,1}, \ldots, S_{i,r} )$ was observed, and $\ell_j \in [0, 1]$ is the genomic location of the $j$th mutation.

The forward transition density under the ISM are very similar to the transition probabilities in the finite alleles case:
\begin{align*}
    &p(\mathbf{S}', \n', \bm{\ell}'|\mathbf{S}, \n, \bm{\ell})\\
    &=
    \begin{cases}
    \frac{\norm{\n}-1}{\norm{\n}-1+\theta} \frac{n_i}{\norm{\n}} 
    &\text{  if  }
    (\mathbf{S}', \mathbf{n}', \bm{\ell}') =(\mathbf{S}, \mathbf{n} + \mathbf{e}_i, \bm{\ell}),
    \quad i \in \{1,\dots, h\},
    \\
    \frac{\theta }{\norm{\n}-1+\theta} \frac{n_i}{\norm{\n}}
    &\text{  if  }
    (\mathbf{S}', \mathbf{n}', \bm{\ell}')=(E_{ij} \mathbf{S}, a_j(\mathbf{n}, 1), a_j(\bm{\ell}, x)),
    \quad i \in \{1,\dots h\}, \; j \in \{0, \ldots, r\},
    \\
    0 &\text{  otherwise},
    \end{cases}
\end{align*}
where $a_j(\mathbf{v}, x)$ is the vector obtained from $\mathbf{v}$ by inserting the scalar $x$ between then $j$th and $(j+1)$th positions, and $E_{ij}$ is an operator which inserts a duplicate of row $i$ as the new last row of $\mathbf{S}$, and then inserts $\mathbf{e}_{h + 1}$ as a new column in the $j$th position.
The backward transition probabilities are intractable, similarly to the finite alleles case, and don't depend on the labels $\bm{\ell}$ so we suppress them from the notation going forward, for the sake of readability.

There are three backward-in-time IS proposal distributions available for the ISM: one due to \cite{griffiths1994ancestral} (GT), an approximation of the optimal proposal due to \cite{stephens2000} (SD), and an improved approximation by \cite{hobolth2008} (HUW).
To describe them, it will be convenient to borrow notation from \cite{song2006} and introduce the set $\mathcal{M} \equiv \mathcal{M}(\mathbf{S}, \mathbf{n}) \subset \{ 1, \ldots, h \}$ of row indices which bear at least one mutation present only in that row, and for which the corresponding entry of $\n$ is 1.
Such a mutation is called a \emph{singleton}.
For $j \in \mathcal{M}$, we write $S_j^{\omega}$ for the row obtained from $S_j$ by flipping the singleton $S_{j, \omega}$ from 1 to 0.
For a mutation $\omega \in \{1, \ldots r\}$, let $d_{\omega} := \sum_{i = 1}^h S_{i, \omega} n_i$ be the number of samples on which it appears.
Then, the three proposal distributions are
\begin{align*}
    q_{GT}(\mathbf{S}', \n'|\mathbf{S}, \n) &\propto
    \begin{cases}
        (n_j - 1) &\text{if } (\mathbf{S}', \mathbf{n}') = (\mathbf{S}, \mathbf{n} - \mathbf{e}_j) \text{ and } n_j \geq 2, \\
        \theta (n_{j'} + 1) / \| \n \|_1 &\text{if } n_j = 1, j \in \mathcal{M}, \text{ and } \exists \omega \; \& \; j' \neq j : (S')_{j'} = S_j^{\omega},\\
        \theta / \| \n \|_1 &\text{if } n_j = 1, j \in \mathcal{M}, \text{ and } \exists \omega : (S')_j = S_j^{\omega},\\
        0 &\text{otherwise},
    \end{cases}\\
    q_{SD}(\mathbf{S}', \n'|\mathbf{S}, \n) &\propto
    \begin{cases}
        n_j &\text{if } (\mathbf{S}', \mathbf{n}') = (\mathbf{S}, \mathbf{n} - \mathbf{e}_j) \text{ and } n_j \geq 2, \\
        1 &\text{if } n_j = 1, j \in \mathcal{M} \text{ and } \exists \omega \; \& \; j' : (S')_{j'} = S_j^{\omega},\\
        0 &\text{otherwise},
    \end{cases}\\
    q_{HUW}(\mathbf{S}', \n'|\mathbf{S}, \n) &\propto
        \sum_{\omega = 1}^r u_{j,\omega}(\theta),
\end{align*}
where
\begin{align*}
    &u_{j,\omega}(\theta) := \\
    &\begin{cases}        
    \displaystyle \frac{n_j}{d_{\omega}} \frac{\displaystyle\sum_{k = 2}^{\norm{\n} - d_{\omega} + 1} \frac{d - 1}{(\norm{\n} - k)(k - 1 + \theta)} \binom{\norm{\n} - d_{\omega} - 1 }{k - 2} \binom{\norm{\n} - 1}{k - 1}^{-1}}{\displaystyle\sum_{k = 2}^{\norm{\n} - d_{\omega} + 1} \frac{1}{k - 1 + \theta} \binom{\norm{\n} - d_{\omega} - 1}{k - 2} \binom{\norm{\n} - 1}{k - 1}^{-1}} &\text{if } S_{j, \omega} = 1, \\
    \displaystyle \frac{n_j}{\norm{\n} - d_{\omega}}\left(1 - \frac{\displaystyle\sum_{k = 2}^{\norm{\n} - d_{\omega} + 1} \frac{d - 1}{(\norm{\n} - k)(k - 1 + \theta)} \binom{\norm{\n} - d_{\omega} - 1 }{k - 2} \binom{\norm{\n} - 1}{k - 1}^{-1}}{\displaystyle\sum_{k = 2}^{\norm{\n} - d_{\omega} + 1} \frac{1}{k - 1 + \theta} \binom{\norm{\n} - d_{\omega} - 1}{k - 2} \binom{\norm{\n} - 1}{k - 1}^{-1}}\right) &\text{if } S_{j, \omega} = 0,
    \end{cases}
\end{align*}
and where the support of $q_{HUW}$ is all states $(\mathbf{S}', \mathbf{n})$ which are reachable from $(\mathbf{S}, \mathbf{n})$ by coalescing two identical lineages or removing one singleton mutation.
The HUW proposal also requires special treatment for some edge cases, such as two remaining lineages separated by $k_1$ and $k_2$ mutations; see \cite[Section 3.2]{hobolth2008} for details.

The complexity of evaluating $u_{j, \omega}(\theta)$ is linear in the number of lineages $\norm{\n}$.
Hence the complexity of evaluating $q_{HUW}$ is $O(\norm{\n} r)$.
Sampling a step from $q_{HUW}$ requires evaluating it for all $h$ haplotypes, and sampling one coalescent tree requires $\norm{\n} - 1 + r$ steps.
Thus, the overall complexity per replicate is $O(\norm{\n} r h (\norm{\n} + r))$, or 
\begin{equation*}
    O(\norm{\n}^2 \theta^2 (\log \norm{\n})^2 + \norm{\n} \theta^3 (\log \norm{\n})^3)
\end{equation*}
using the asymptotics $r \sim h \sim \theta \log(\norm{\n})$ which hold for the coalescent in expectation \citep{watterson1975}.
This cost is prohibitive both for large samples $\norm{\n}$, and for large sequence lengths, with which $\theta$ grows linearly.

To render the HUW proposal practical, note that for a fixed value of $\theta$ the large sums in the numerator and denominator required to evaluate $u_{j, \omega}(\theta)$ can be pre-computed for all required values of $\norm{\n}$ between 2 and the number of observed lineages, and all possible values of $d_{\omega} \in \{1, \ldots, \norm{\n} - 1\}$.
The resulting matrix requires $O(\norm{\n}^2)$ storage, but is independent of the observed data. 
With this matrix in place, $u_{j, \omega}(\theta)$ can be evaluated in $O(1)$ time.
Moreover, the whole proposal distribution $q_{HUW}( \cdot, \cdot | \mathbf{S}, \mathbf{n})$ can be computed once for a given sample size, and only needs to be recomputed after a coalescence event, at which point it requires a re-traversal of the whole matrix $\mathbf{S}$.
A simulation step which removes a mutation affects only the row and column of $\mathbf{S}$ in which that mutation features, requiring only an $O(r + h)$ update rather than a full $O(r h)$ re-computation of the proposal distribution.
As a result, the computational complexity reduces to three components:
\begin{enumerate}
    \item $\norm{\n} - 1 + r$ steps, each of which requires a sample from $q_{HUW}( \cdot, \cdot | \mathbf{S}, \mathbf{n})$ at $O(h)$ cost per step,
    \item $\norm{\n} - 1$ computations of $q_{HUW}( \cdot, \cdot | \mathbf{S}, \mathbf{n})$ at cost $O(r h)$ each,
    \item and $r$ partial refreshes of $q_{HUW}( \cdot, \cdot | \mathbf{S}, \mathbf{n})$ at cost $O(r + h)$ per step.
\end{enumerate}
With the expected growth of $r$ and $h$ with $\norm{n}$ under the coalescent, the total cost per replicate tree is
\begin{align}
    O((\norm{\n} + r) h + \norm{\n} r h + r(r + h)) &= O(\norm{\n} \theta \log \norm{\n} + 3 \theta^2 (\log \norm{\n})^2 + \norm{\n} \theta^2 (\log \norm{\n})^2) \notag \\
    &= O(\norm{\n} \theta^2 (\log \norm{\n})^2),\label{eq:huw_cost}
\end{align}
improving the scaling in both sample size and sequence length by a linear factor.
However, the infinite sites SD proposal is substantially faster at a cost of $O(h)$ per step, or
\begin{equation}\label{eq:sd_cost}
    O((\norm{\n} + r) h) = O(\norm{\n} \theta \log \norm{\n} + \theta^2 (\log \norm{\n})^2)
\end{equation}
per replicate tree.

Theorem \ref{thm:weak_conv} does not apply to the ISM.
However, since the ISM is regarded as a good approximation to the finite alleles model for long sequences and rare mutations, it is instructive to examine whether similar conclusions about importance sampling proposal distributions hold.
To that end, we applied all three ISM proposal distributions to the data set of \cite{ward1991}---a common benchmark with $n = 55$ samples and $r = 18$ mutations.
To assess scaling, we also simulated two synthetic data sets with respective sizes $n = 550$ and $n = 5500$ using $\theta = 5.0$, which is the approximate maximum likelihood estimator from the \cite{ward1991} data set.
For HUW, we set the driving value of $\theta$ used to pre-compute the proposals for each data set equal to the Watterson estimator \citep{watterson1975}, which takes respective values 3.93, 4.94, and 4.90 for the three data sets.
The largest matrix took around 2 hours of computing time in serial, but the computation is trivial to parallelise and can be reused for any data set with size no greater than 5500 and for which 4.9 is an acceptable driving value for the population-rescaled mutation rate.

\begin{figure}[]
\centering
\begin{subfigure}[t]{0.49\textwidth}
\includegraphics[width=\textwidth]{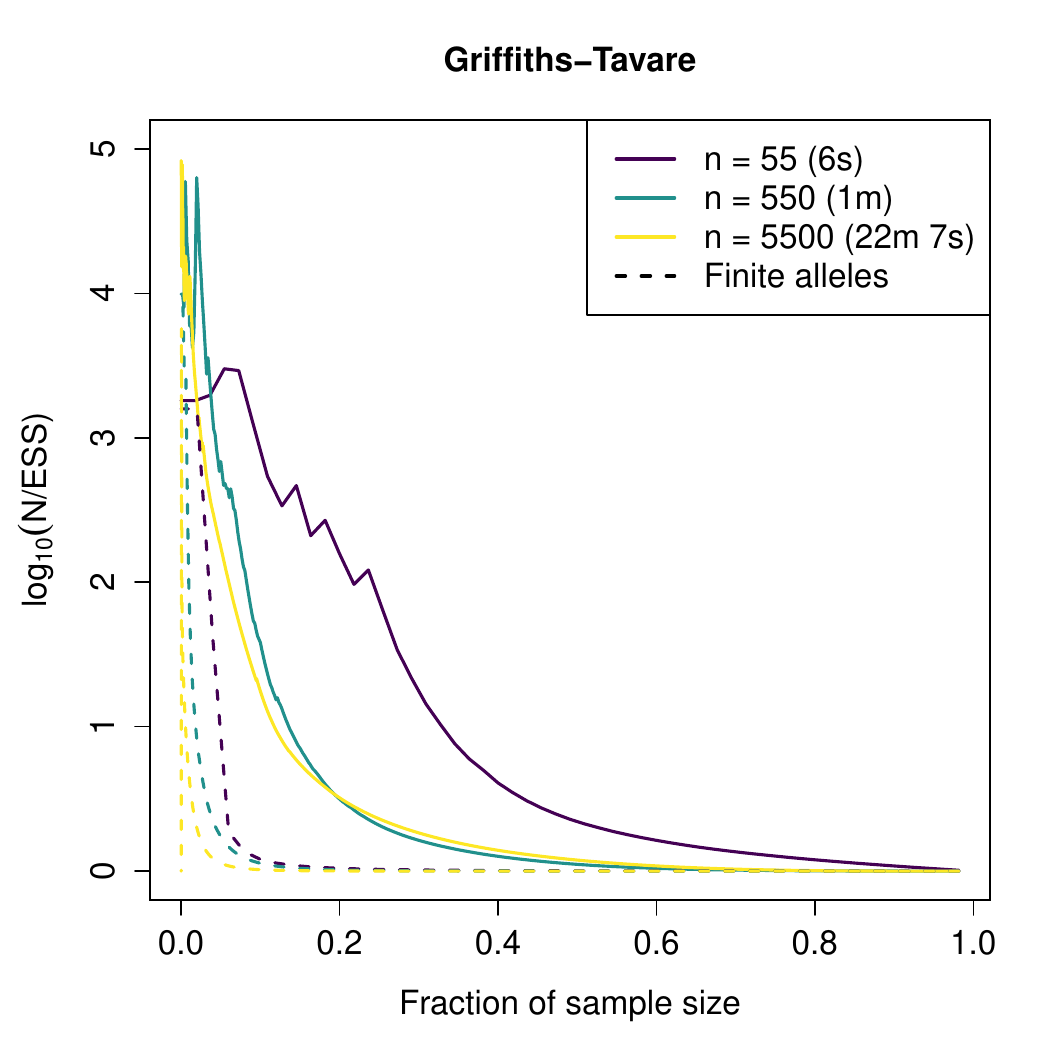}
\caption{}
\label{fig:cost-ism:a}
\end{subfigure}
\begin{subfigure}[t]{0.49\textwidth}
\includegraphics[width=\textwidth]{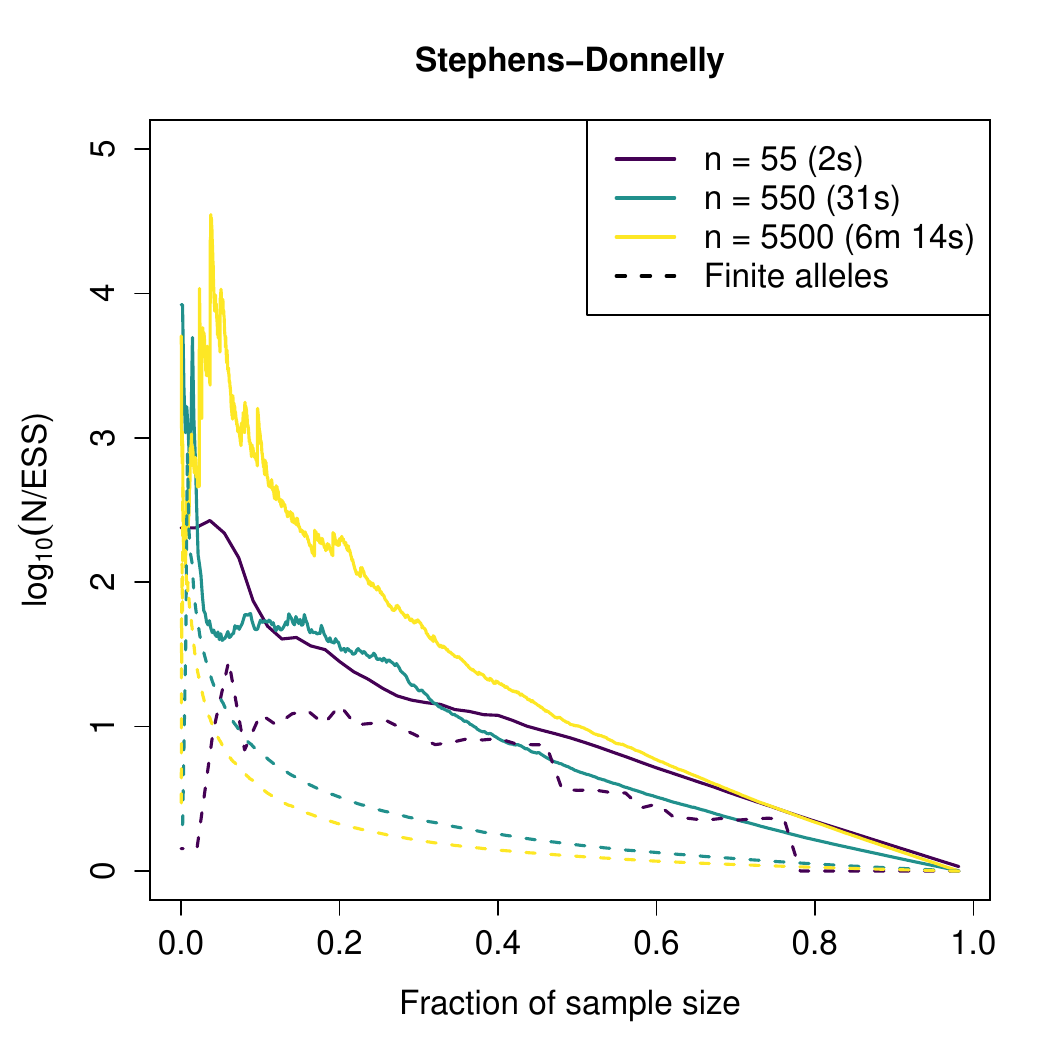}
\caption{}
\label{fig:cost-ism:b}
\end{subfigure}
\begin{subfigure}[t]{0.49\textwidth}
\includegraphics[width=\textwidth]{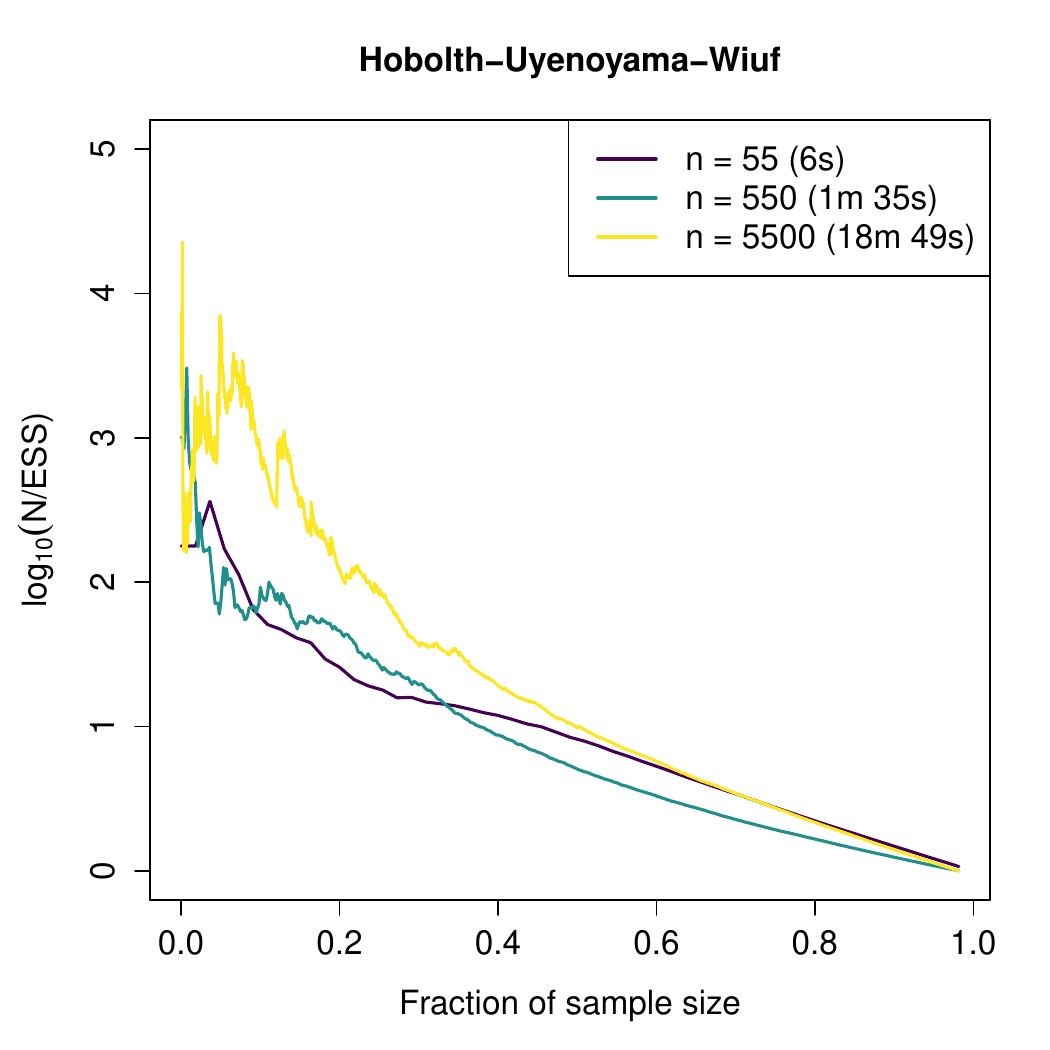}
\caption{}
\label{fig:cost-ism:c}
\end{subfigure}
\caption{Logarithms of normalised second moments of importance weights for the GT, SD, and HUW proposals, measured by stopping replicates upon first hitting each fixed number of remaining lineages.
Each figure was obtained by averaging $10^5$ replicates.
The results from Figure \ref{fig:cost:a} and \ref{fig:cost:b} are reproduced in dashed lines in panels \ref{fig:cost-ism:a} and \ref{fig:cost-ism:b} for ease of comparison.}
\label{fig:cost-ism}
\end{figure}

Figure \ref{fig:cost-ism} repeats the analysis from Figure \ref{fig:cost} for the ISM and the three proposals.
While the GT proposal appears consistent with Figure \ref{fig:cost}, albeit with slower convergence, the behaviour of the variances under the more practical SD and HUW proposals are qualitatively different.
Indeed, they are close to straight lines (on a log-scale), in line with the usual exponential growth of importance weight variance in the absence of resampling \citep{doucet2011}.
The fact that variances increase throughout the simulation run suggests i) that there may be no particular benefit in allocating more particles near the end of the simulation, and ii) that resampling will be effective.

We tested these suggestions by simulating sampling distribution estimators independently for a range of values of $\theta$, using the four replicate schedules from Section \ref{subsection:fam}.
Figure \ref{fig:likelihood-ism-55} bears out both suggestions for the data set with $n = 55$ samples: the results with resampling are considerably less noisy than those without, except for schedule 3 with only 1000 particles which has very high standard error.
There is also very little difference between schedules 1, 2, and 4.
Schedule 3 becomes erratic when resampling is applied, showing substantial Monte Carlo error in estimates of both the mean and standard error.
Based on non-overlapping confidence intervals, schedules 1, 2, and 4 also appear to underestimate standard errors slightly when resampling is used.

\begin{figure}[H]
\centering
\begin{subfigure}[t]{0.49\textwidth}
\includegraphics[width=\textwidth]{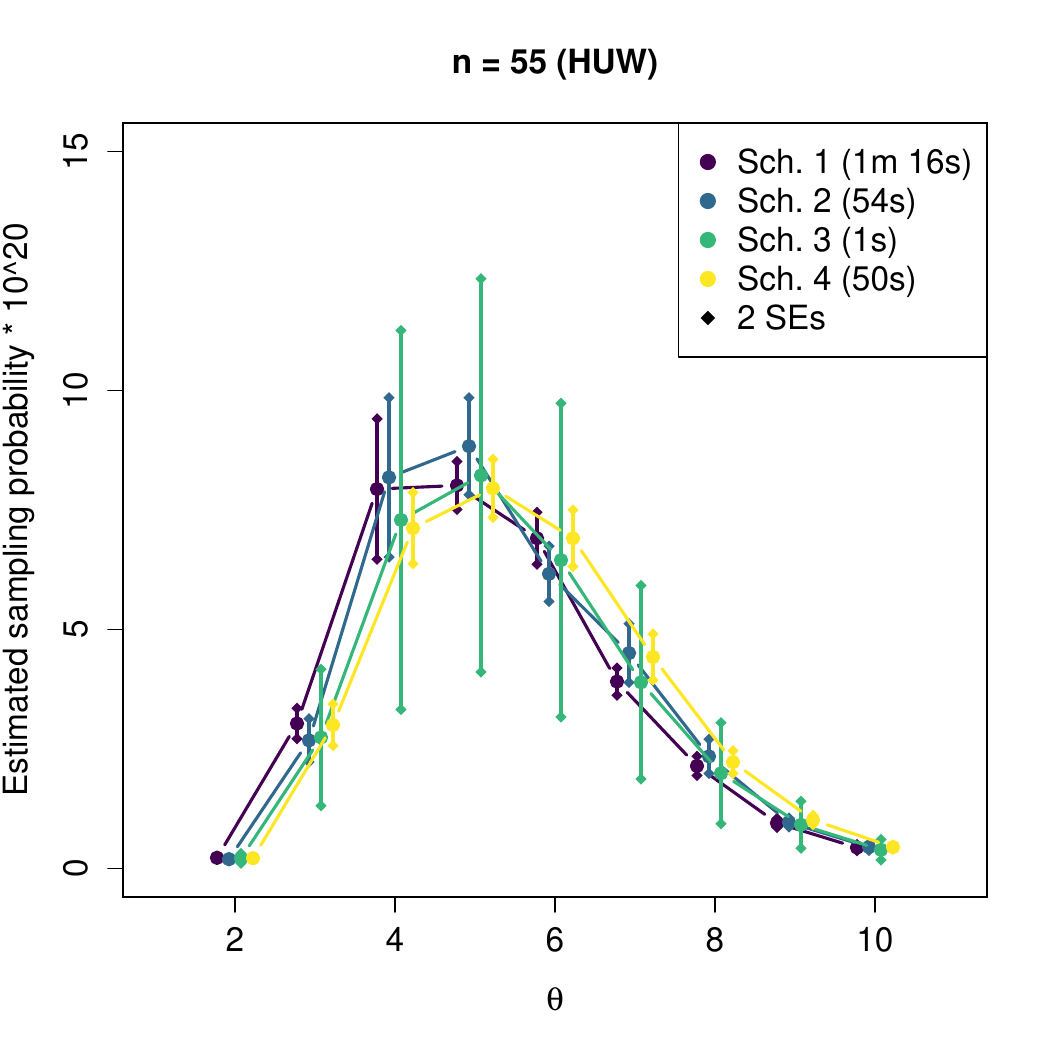}
\caption{}
\label{fig:likelihood-ism-55:a}
\end{subfigure}
\begin{subfigure}[t]{0.49\textwidth}
\includegraphics[width=\textwidth]{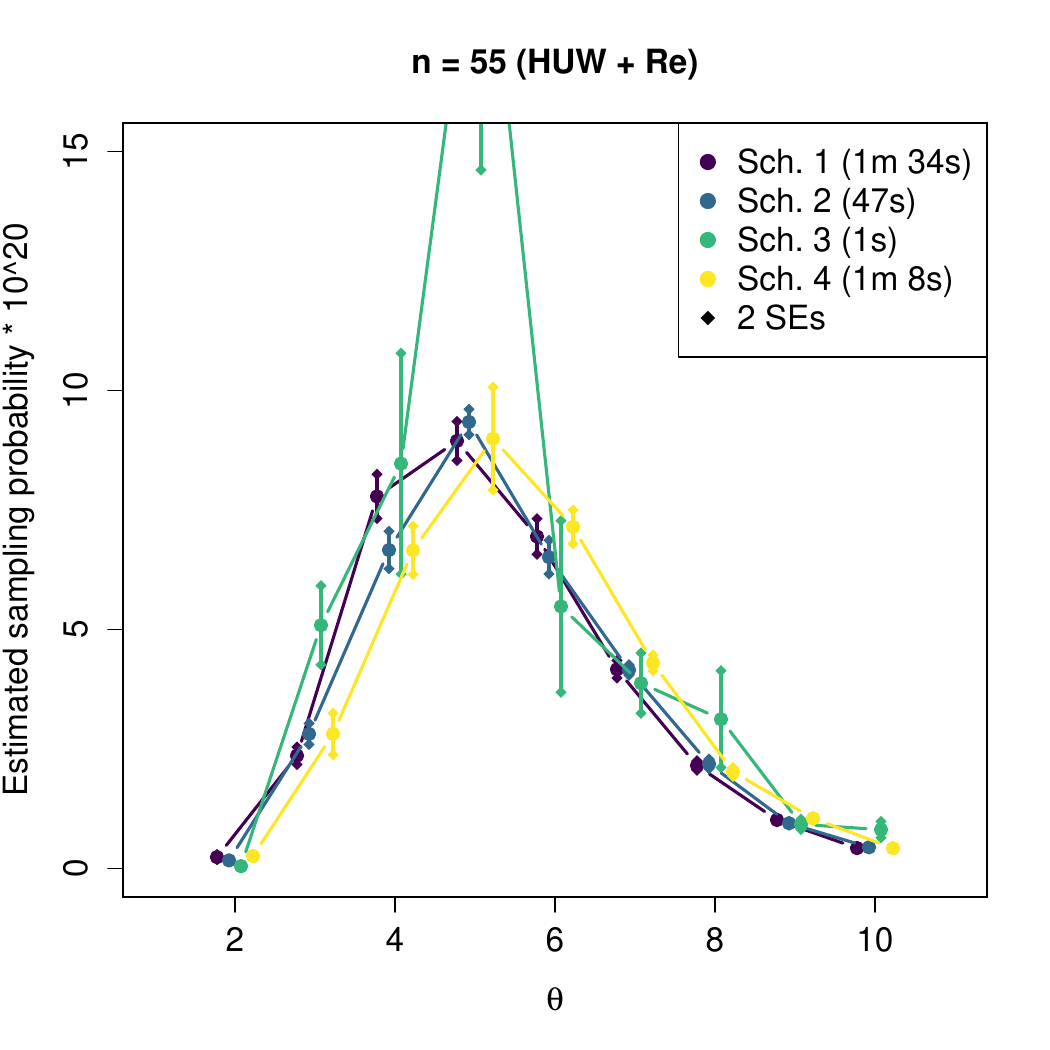}
\caption{}
\label{fig:likelihood-ism-55:b}
\end{subfigure}
\begin{subfigure}[t]{0.49\textwidth}
\includegraphics[width=\textwidth]{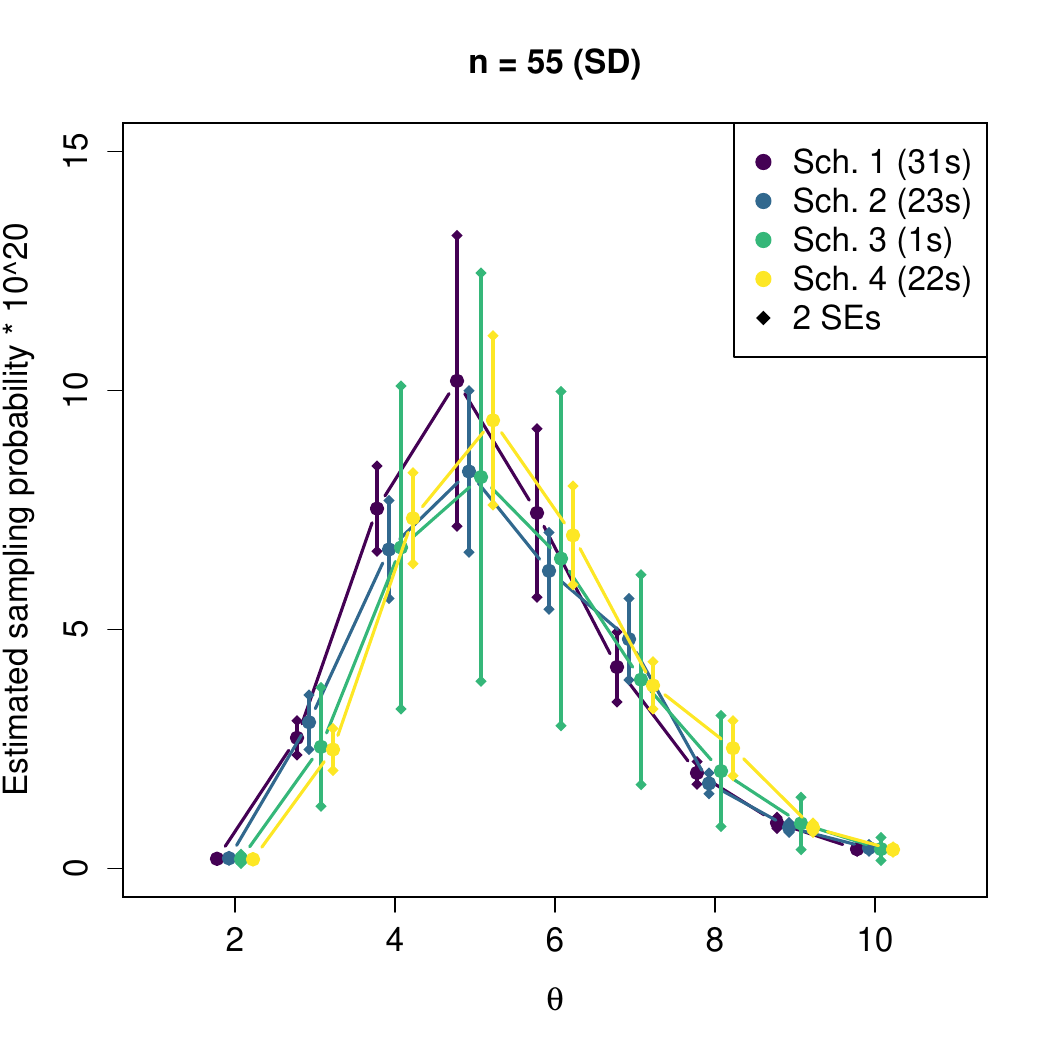}
\caption{}
\label{fig:likelihood-ism-55:c}
\end{subfigure}
\begin{subfigure}[t]{0.49\textwidth}
\includegraphics[width=\textwidth]{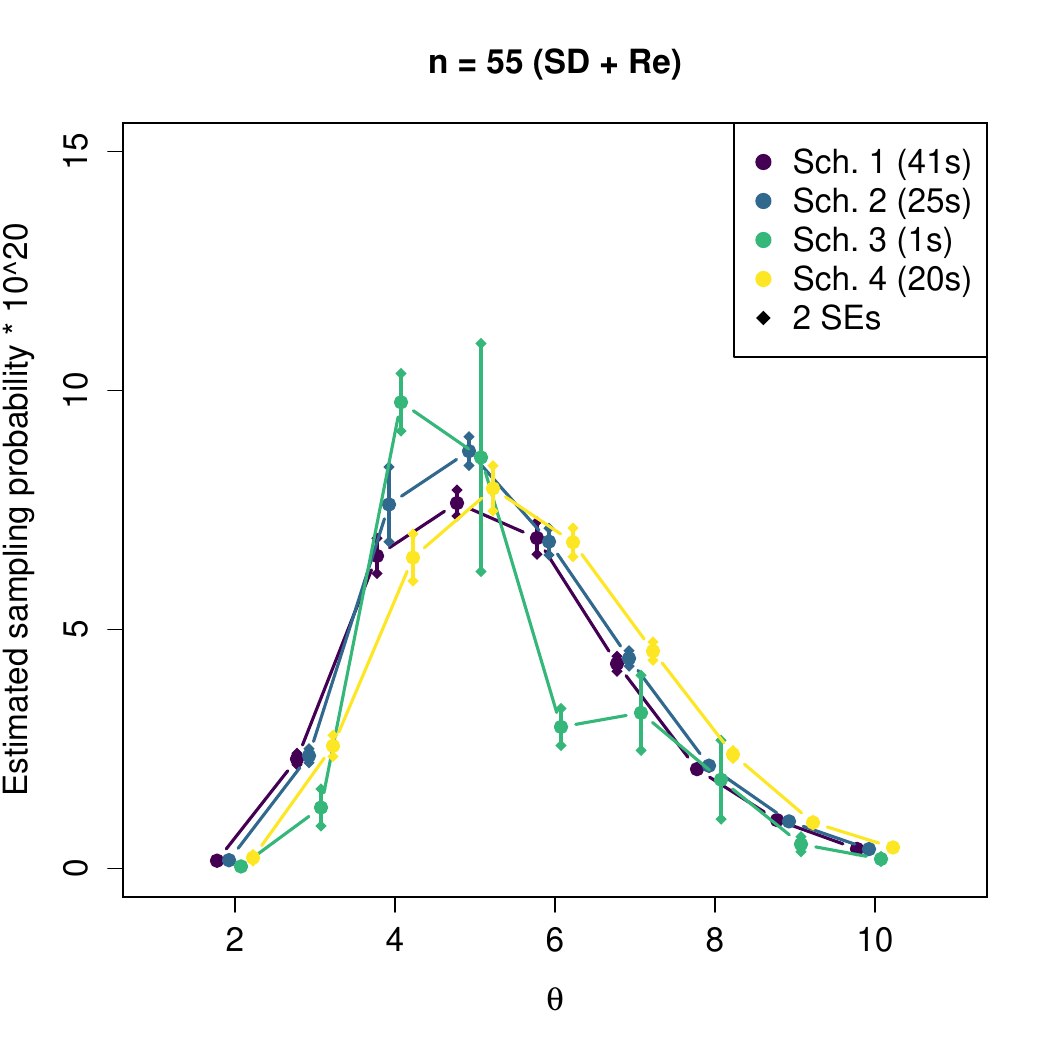}
\caption{}
\label{fig:likelihood-ism-55:d}
\end{subfigure}
\caption{Sampling probability estimates for the $n = 55$ data set from the HUW and SD proposals, simulated using the four schedules of replicates with $\gamma = 10^3$ and $\Gamma = 10^5$, independently for $\theta \in \{2, 3, \ldots, 10\}$.
Replicates in panels \ref{fig:likelihood-ism-55:b} and \ref{fig:likelihood-ism-55:d} were resampled in the way described in the caption of Figure \ref{fig:resampling}.
Standard errors for schedule 2, and for every schedule with resampling, were computed using the unbiased method of \cite{chan:2013}.
Each y-axis is expressed in units of $10^{-20}$ to aid visualisation, and small horizontal offsets have been artificially added to all four curves in each panel for visual clarity.}
\label{fig:likelihood-ism-55}
\end{figure}

Figure \ref{fig:likelihood-ism-550} shows that similar conclusions hold for a larger data set with $n = 550$ samples. 
It also illustrates the difference in computational cost between the HUW and SD proposals, which was already evident in the per-replicate analyses in \eqref{eq:huw_cost} and \eqref{eq:sd_cost}.
The gains in accuracy with the HUW proposal do not seem to compensate for its higher cost.
Comparing Figures \ref{fig:likelihood-ism-55} and \ref{fig:likelihood-ism-550} clearly shows that the performance of both the SD and HUW proposals deteriorates with increasing sample size, unlike in Figure \ref{fig:sd-surfaces} for the finite alleles model.
None of the schedules appear fully reliable, with plenty of evidence of Monte Carlo error and ill-calibrated standard error estimates.
Resampling appears to be beneficial, and indeed the means of schedules 1 and 2 nearly coincide when resampling takes place, though their standard error estimates still yield non-overlapping confidence intervals in some cases.

\begin{figure}[H]
\centering
\begin{subfigure}[t]{0.49\textwidth}
\includegraphics[width=\textwidth]{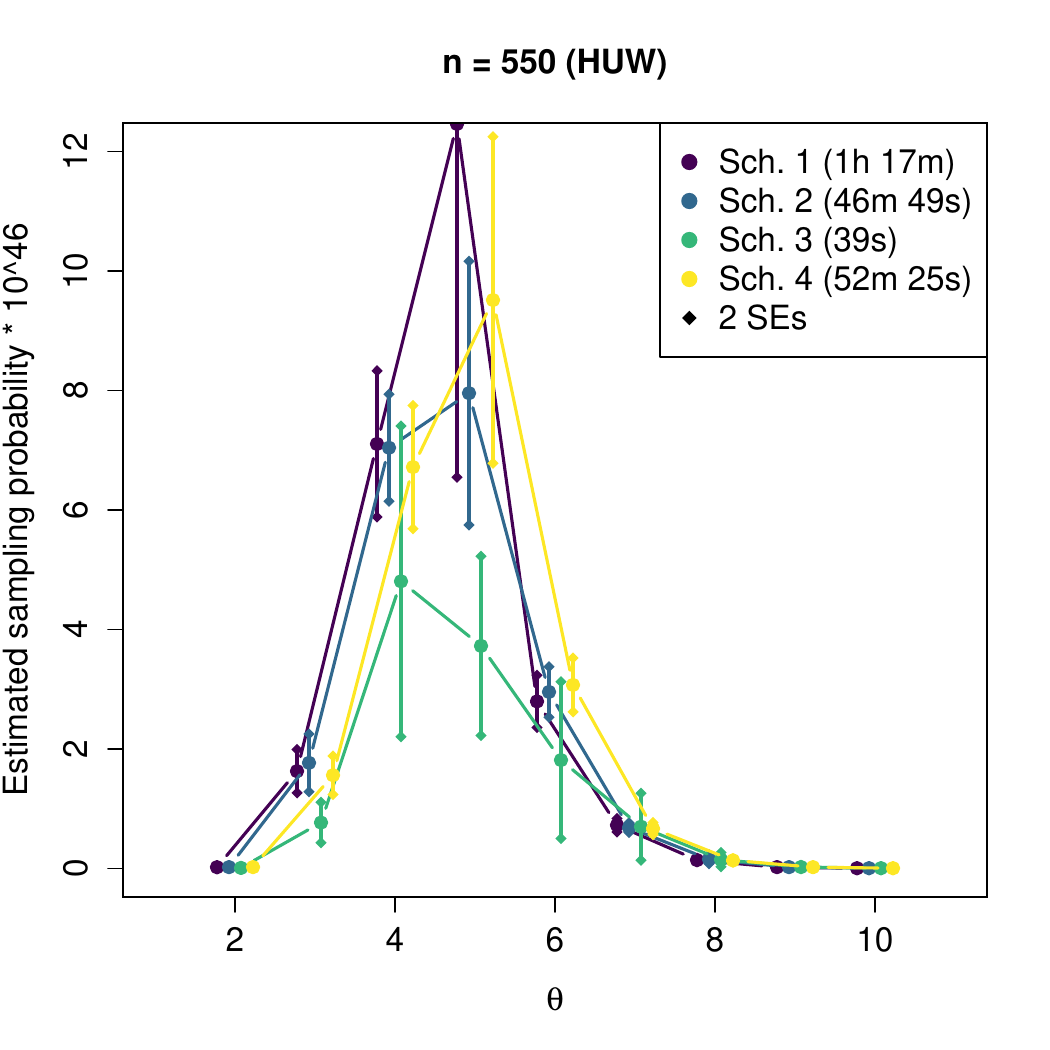}
\caption{}
\label{fig:likelihood-ism-550:a}
\end{subfigure}
\begin{subfigure}[t]{0.49\textwidth}
\includegraphics[width=\textwidth]{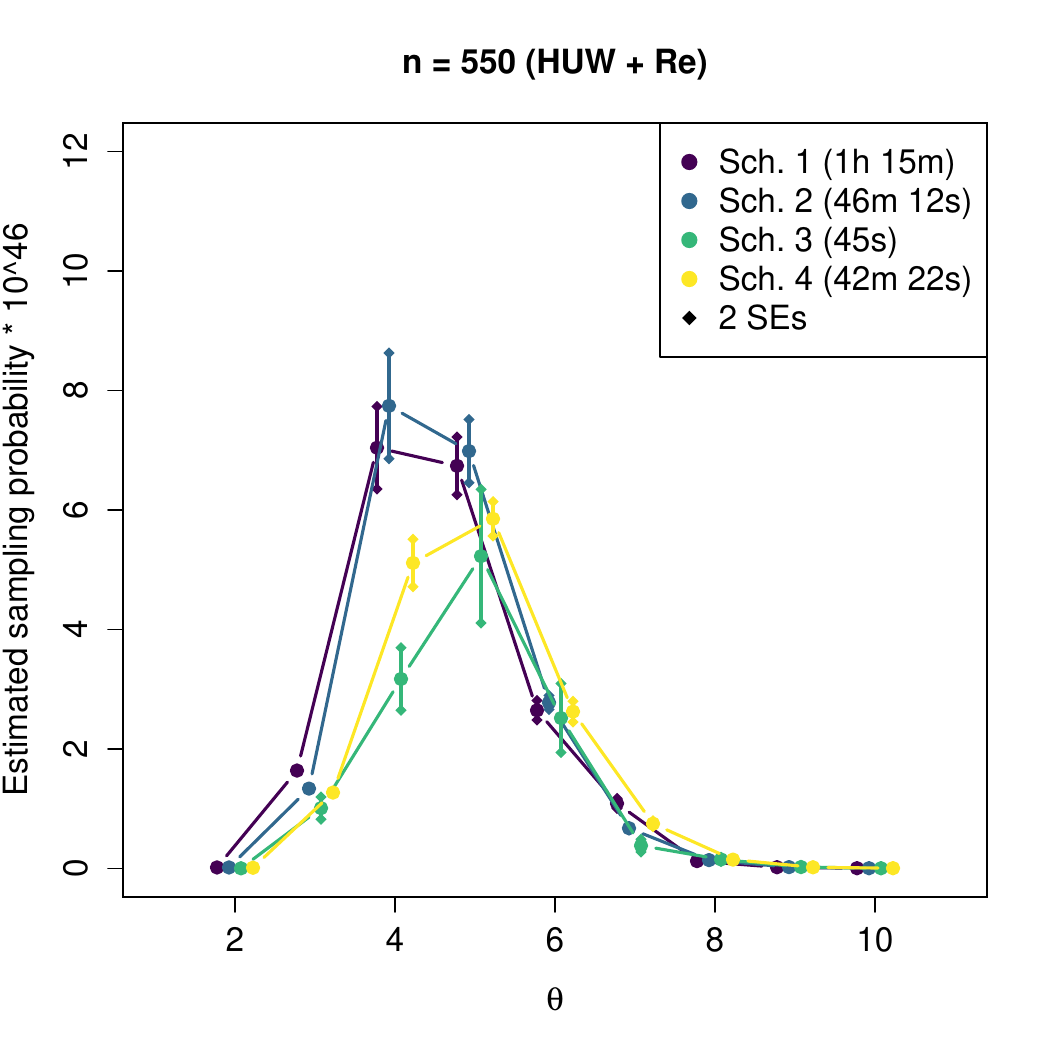}
\caption{}
\label{fig:likelihood-ism-550:b}
\end{subfigure}
\begin{subfigure}[t]{0.49\textwidth}
\includegraphics[width=\textwidth]{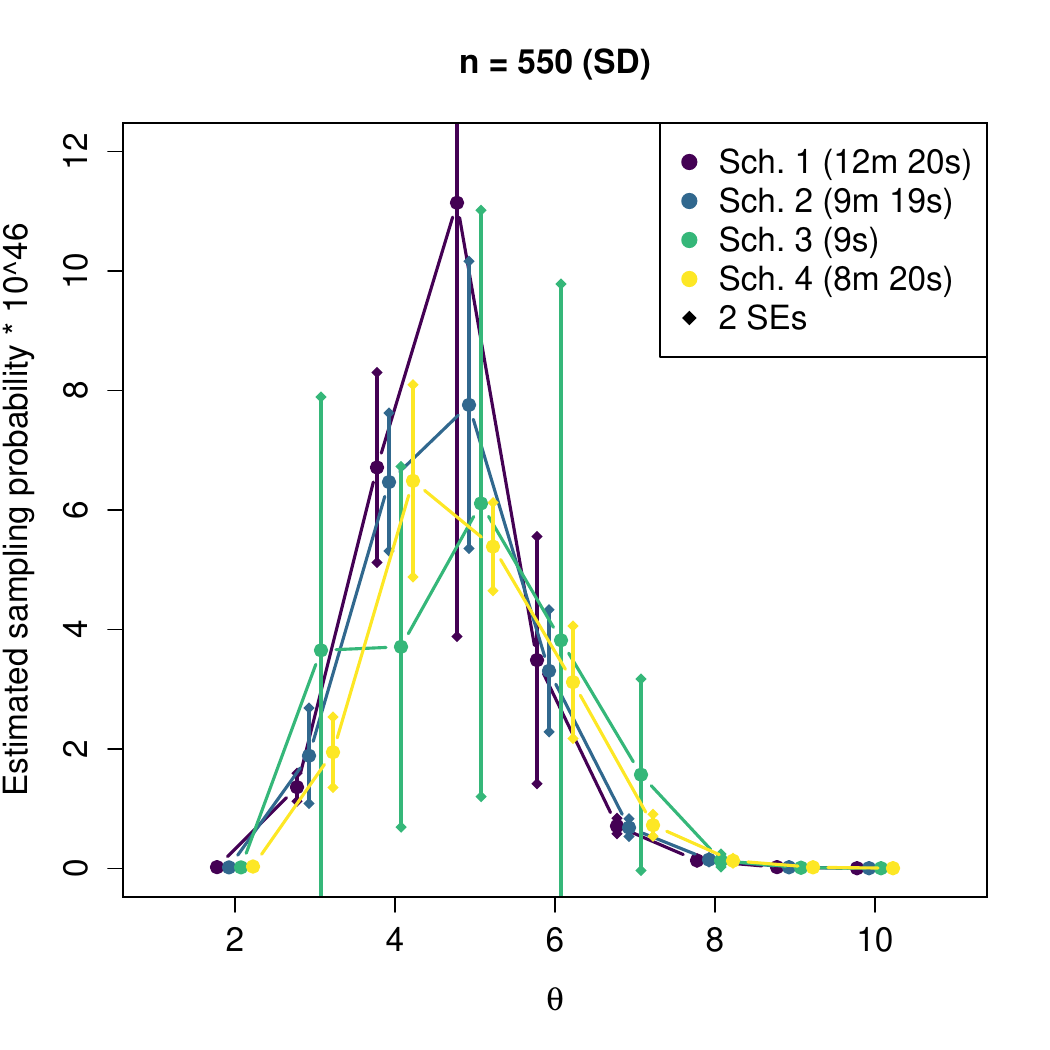}
\caption{}
\label{fig:likelihood-ism-550:c}
\end{subfigure}
\begin{subfigure}[t]{0.49\textwidth}
\includegraphics[width=\textwidth]{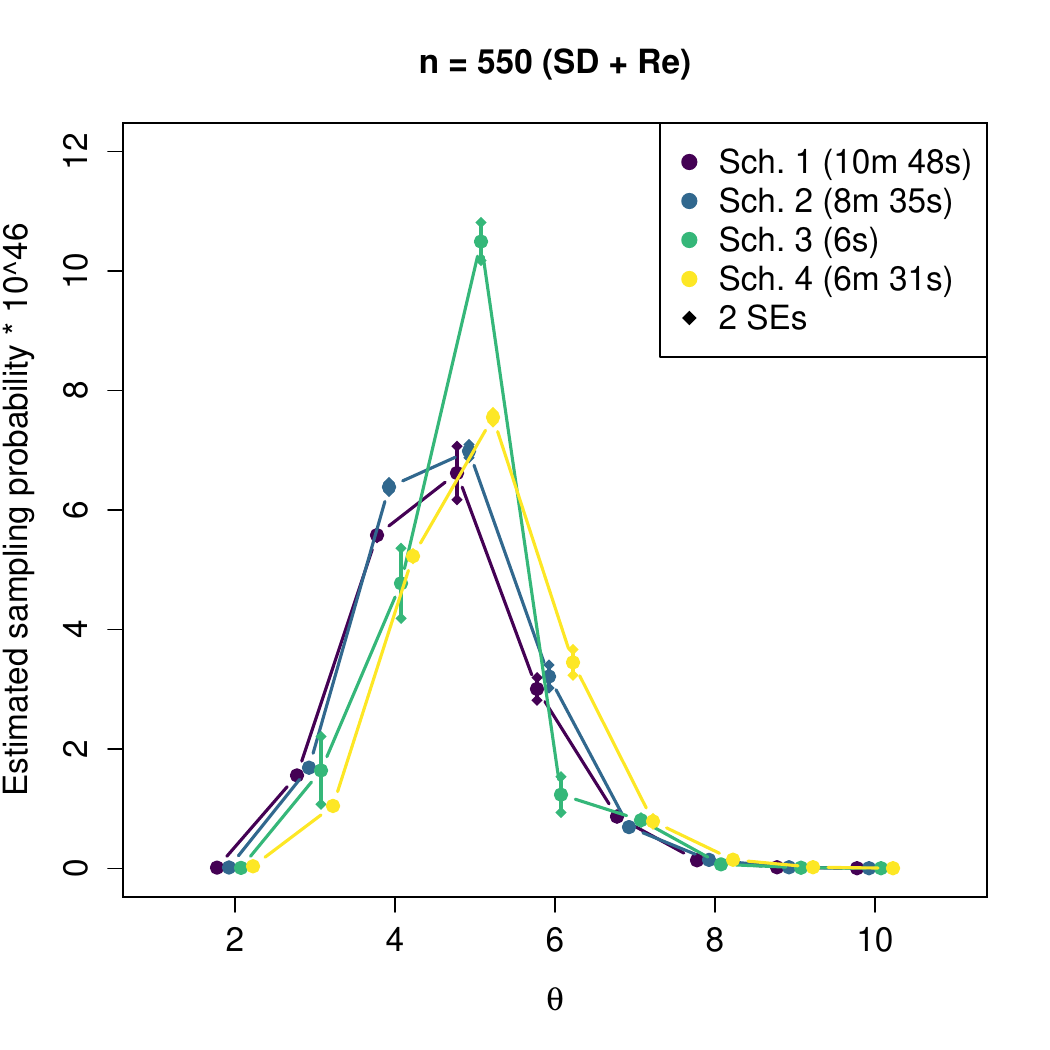}
\caption{}
\label{fig:likelihood-ism-550:d}
\end{subfigure}
\caption{Sampling probability estimates for the $n = 550$ data set from the HUW and SD proposals, simulated using the four schedules of replicates with $\gamma = 2 \times 10^3$ and $\Gamma = 2 \times 10^5$, independently for $\theta \in \{2, 3, \ldots, 10\}$.
Replicates in panels \ref{fig:likelihood-ism-550:b} and \ref{fig:likelihood-ism-550:d} were resampled in the way described in the caption of Figure \ref{fig:resampling}.
Standard errors for schedule 2, and for every schedule with resampling, were computed using the unbiased method of \cite{chan:2013}.
Eaxh y-axis is expressed in units of $10^{-46}$ to aid visualisation, and small horizontal offsets have been artificially added to all four curves in each panel for visual clarity.}
\label{fig:likelihood-ism-550}
\end{figure}

\section{Proofs}
\label{sect:proofs}

\subsection{Convergence of the cost sequence - Proof of Theorem \ref{thm:weak_conv}}
\label{proof:weak_conv}

The proof of Theorem \ref{thm:weak_conv} follows the steps of the proof of \cite[Theorem 2.1]{favero2024}, the difference being the additional cost component which leads to more complicated expressions and requires an extension of the technical framework and additional assumptions. 

\subsubsection{Technical framework and additional notation}
The scaled mutation probabilities in \eqref{eq:lim_trans_prob}, and consequently the intensities $\lambda_{ij}$ of the limiting Poisson processes of Theorem \ref{thm:weak_conv}, 
explode near the boundary 
    $
    \Omega_0 :=\{\y=(y_1,\dots,y_d) : y_i=0 \text{ for some } i \}.
    $
To address this problem, we define an appropriate state space for the limiting process and a specific metric under which compact sets are bounded away from the boundary $\Omega_0$. This is a straightforward generalisation of the technical framework of \cite{favero2024}. 

For the limiting process $\Z$, we thus consider the state space $E= \mathbb{R}_{+} \times E_1 \times \mathbb{N}^{d^2} $, where $E_1=(0,\infty]^d$. We equip $E$ with the product metric 
$\metric=  \norma{\cdot}\oplus\metric_1\oplus  \norma{\cdot}$, where $\psi_1(\y_1,\y_2)=\norma{1/\y_1-1/\y_2}$, 
with component-wise inversion and with the inverse of $\infty$ being $0$.
Note that, in  $E_1$, the roles of $0$ and $\infty$ are reversed component-wise, the metric $\metric_1$ is equivalent to the Euclidean metric away from the boundary $\Omega_0$ and from infinity,  and compact sets are bounded away from $\Omega_0$. 

Let $C_c^{\infty}(E)$ and $\hat{C}(E)$ be the spaces of real-valued continuous functions on $(E,\metric)$ that are, respectively, smooth with compact support or vanishing at infinity.
In $(E,\metric)$, functions with compact support are equal to zero near $\Omega_0$ in the $E_1$-component and near the classical infinity, in the other components. Similarly, functions vanishing at infinity, vanish towards $\Omega_0$ in the $E_1$-component and towards infinity, in the classical sense, in the other components.
For further explanations and properties of this state space and related functions we refer to \cite[Appendix A.2]{favero2024}.

Furthermore, let 
$E\nth= \mathbb{R}_+ \times \frac{1}{n}\mathbb{N}^d\setminus\{\boldsymbol{0}\}\times \mathbb{N}^{d^2} $ 
be the state space of $\Z\nth$,
and let $\eta_n $ map any function on $E$ into its restriction on $E\nth$, with value zero on $\mathbb{R}_+ \times \Omega_0 \times \mathbb{N}^{d^2}$.

\subsubsection{Convergence of generators (PIM)}
We now rigorously state and prove the convergence of generators which was  explained heuristically in Section \ref{sect:heuristic_conv}.
We assume parent-independent mutations here so that the backward transition probabilities are explicitly known, and we deal with the general mutation case in the last part of the proof. 

Let $A\nth $ be the infinitesimal generator of $\tilde{\Z}\nth$,  defined in \eqref{eq:An}, and let and $A $ be the infinitesimal generator of $\Z$, defined in \eqref{eq:A}.   
That the infinitesimal generator of $\Z$ is indeed $A$ is heuristically explained in Section \ref{sect:heuristic_conv}, the rigorous proof, which we omit, is analogous to the one in \cite[Appendix A.3]{favero2024}.

To prove convergence of generators, we need to prove that, for any given  $f\in C_c^{\infty}(E)$,
    \begin{equation}
    \label{eq:An2A}
    \lim_{n\to\infty}
    \sup_{(c,\y,\m)\in E\nth}
    \left|A\nth \eta_n f(c,\y,\m) - \eta_n A f (c,\y,\m)\right|
    =0.
    \end{equation} 
Since $f$ has compact support in $(E,\metric)$, there exist $\delta,M>0$ such that the support of $f$ is contained in the compact set 
    \begin{align*}
    K=\{(c,\y,\m)\in E: 
    y_i\geq \delta, 
    c\leq M, m_{ij}\leq M,\forall i,j \in \{1,\dots,d\}\}.
    \end{align*}
Let $K_1$ be the projection   of $K$ on $E_1$.
Assumption \ref{assumption:costs} implies 
    \begin{align}
    \label{eq:assumption}
     \lim_{n\to\infty}
     \sup_{\y\in E_1\nth\cap K_1}
     \left|n(c\nth(\e_j\mid\y)-1)-a_j(\y)\right| =0
     , \qquad
     \lim_{n\to\infty}
     \sup_{\y\in E_1\nth\cap K_1}
    \left|c\nth(\e_j-\e_i\mid\y)-b_{ij}(\y)\right|=0
     ,
    \end{align}
for $i,j \in \{1,\dots,d\}$.
Furthermore,
in  \cite[Proof of Theorem 2.1]{favero2024} it is shown, in the PIM case, that
    \begin{align}
    \label{eq:rho_uniform}
     \lim_{n\to\infty}
     \sup_{\y\in E_1\nth\cap K_1}
     \left|\rho\nth(\e_j|\y)-\frac{y_j}{\norm{\y}}\right| =0
    , \qquad
     \lim_{n\to\infty}
     \sup_{\y\in E_1\nth\cap K_1}
    \left|n\rho\nth(\e_j-\e_i|\y)-\lambda_{ij}(\y)\right|=0,   
    \end{align}
for $i,j \in \{1,\dots,d\}$.

To prove \eqref{eq:An2A}, we first take $(c,\y,\m)\in E\nth\cap K^\complement$. Then, $f=Af=0$ in a neighbourhood of $(c,\y,\m)$. 
If also 
$\left(c\, c\nth(\mathbf{e}_j|\y), \y\nth -\frac{1}{n}\mathbf{e}_j,\m\right)$
and
$\left(c\, c\nth (\mathbf{e}_j-\mathbf{e}_i|\y), \y -
    \frac{1}{n}\mathbf{e}_j+\frac{1}{n}\mathbf{e}_i,\m+\mathbf{e}_{ij}\right)$
belong to 
$E\nth\cap K^\complement$,
for all $i,j \in \{1,\dots,d\}$, $n\in\mathbb{N}$,
then 
$A\nth\eta_nf(c,\y,\m)=\eta_n A f(c,\y,\m)=0.$
Otherwise, it must be that $m_{ij}<M, i,j \in \{1,\dots,d\}$, and one of the following two cases occurs:
\begin{enumerate}
\item For a unique $i_0$ and some $n$, $\delta-1/n\leq y_{i_0}<\delta$, while $y_j\geq \delta$ for all $j\neq i_0$ and $c\leq M$, $c c\nth(\e_j\mid\y)\leq M$, $c c\nth(\e_j-\e_i\mid\y)\leq M$, $i,j \in \{1,\dots,d\}$;
\item $y_j\geq \delta$ for all $j \in \{1,\dots,d\}$, $c>M$, and,
for some $j$ and/or $i$, 
$c c\nth(\e_j\mid\y)\leq M$, and/or $c c\nth(\e_j-\e_i\mid\y)\leq M$.
\end{enumerate}
In both cases, $A\nth\eta_nf(c,\y,\m)$ is different from zero, but converges uniformly to $0$ because $\y\in K_1$, $b_{ij}\geq 1, \; i,j \in \{1,\dots,d\}$, and because of \eqref{eq:assumption}, \eqref{eq:rho_uniform}, and the properties of $f$. 

Now, we take $(c,\y,\m)\in E\nth\cap K$ and find a bound for $\left|A\nth \eta_n f(c,\y,\m) - \eta_n A f (c,\y,\m)\right|$.
First, note that, for $j \in \{1,\dots,d\},$ there exist $\bar{c}_j$, with $|\bar{c}_j-c|\leq |c c\nth(\e_j\mid\y)-c|$, and $s_j$, with $|s_j|\leq 1/n$, such that
    \begin{align*}
    &f\left(c c\nth(\e_j\mid\y),\y-\frac{1}{n}\e_j,\m\right)-f(c,\y,\m)
    \\&\qquad = 
    \partial_cf(\bar{c}_j,\y-s_j\e_j,\m) c( c\nth(\e_j\mid\y)-1) 
    - \frac{1}{n} \partial_{y_j}f (\bar{c}_j,\y-s_j\e_j,\m) .
    \end{align*}
Therefore,
	\begin{align}
	\nonumber
	&
	\left| 
	A\nth \eta_n f(c,\y,\m) -\eta_n Af(c,\y,\m)
	\right| 
	\\
	\label{An-A.1c}
	&\leq
	c\sum_{j=1}^d
	\left| 
	\partial_cf(\bar{c}_j,\y-s_j\e_j,\m) ( c\nth(\e_j\mid\y)-1) 
	\rho\nth(\mathbf{e}_j|\y)- 
	\partial_cf(c,\y,\m) a_j(\y) \frac{y_j}{\norm{\y}}
	\right| 
	\\ \label{An-A.1y}
	&\qquad+
	\sum_{j=1}^d
	\left| 
	\partial_{ y_j}	f(\bar{c}_j,\y-s_j\e_j,\m)
	\rho\nth(\mathbf{e}_j|\y) 
	 -\partial_{ y_j}f(c,\y,\m) \frac{y_j}{\norm{\y}}
	\right| 
	\\ \nonumber
	&\qquad+
	\sum_{i,j=1}^d
	\Bigg|
	f\left(c\, c\nth (\e_j-\e_i\mid\y), 
	\y -\frac{1}{n}\mathbf{e}_j+\frac{1}{n}\mathbf{e}_i,\m+\mathbf{e}_{ij}\right)
	n \rho\nth(\mathbf{e}_j-\mathbf{e}_i|\y)
	- 
	 \\  \label{An-A.2a}
	& \qquad \quad \quad \quad - 
	f(cb_{ij}(\y),\y,\m+\mathbf{e}_{ij}) \lambda_{ij}(\y)
	\Bigg| 
	\\ \label{An-A.2b}
	&\qquad
	+
	|f(c,\y,\m)| \sum_{i,j=1}^d 
	| n \rho\nth(\mathbf{e}_j-\mathbf{e}_i|\y)- \lambda_{ij}(\y)|.
	\end{align}

The $j^{th}$ term of the sum  \eqref{An-A.1c} is bounded by, using the mean value theorem, 
    \begin{align*}
    &M \left[M\norminfty{\partial_cf} |c\nth(\e_j\mid\y)-1| + \frac{1}{n}\norminfty{\partial_{y_j}\partial_c f}\right]
    n\left|  c\nth(\e_j\mid\y)-1 \right| 
    \\
    &\qquad+
    M \norminfty{\partial_c f} 
    \left|n( c\nth(\e_j\mid\y)-1) \rho\nth(\mathbf{e}_j|\y)- 
     a_j(\y)\frac{y_j}{\norm{\y}} \right|,
    \end{align*}
the supremum of which, over $y\in E_1\nth\cap K_1$, vanishes as $n\to \infty$, by   \eqref{eq:assumption}, \eqref{eq:rho_uniform}, and since $a$ is bounded on compact sets. 

The $j^{th}$ term of the sum 
\eqref{An-A.1y} is bounded by 
    \begin{align*}
    \left| 
	\partial_{ y_j}	f(\bar{c}_j,\y-s_j\e_j,\m)
	-\partial_{ y_j}f(c,\y,\m)
	\right|
	+
	\norminfty{\partial_{ y_j}f}
	\left|
	\rho\nth(\mathbf{e}_j|\y) - \frac{y_j}{\norm{\y}}
	\right|,
    \end{align*}
the supremum of which, over $y\in E_1\nth\cap K_1$, vanishes as $n\to \infty$,
since $\partial_{y_j}f$ is uniformly continuous, $|\bar{c}_j-c|\leq |c c\nth(\e_j\mid\y)-c|$, $|s_j|\leq \frac{1}{n}$ and by \eqref{eq:assumption}, \eqref{eq:rho_uniform}.

The $ij^{th}$ term in \eqref{An-A.2a} is bounded by
    \begin{align*}
    &\left|f\left(c\, c\nth (\e_j-\e_i\mid\y), 
	\y -\frac{1}{n}\mathbf{e}_j+\frac{1}{n}\mathbf{e}_i,\m+\mathbf{e}_{ij}\right)
	-
	f(cb_{ij}(\y),\y,\m+\mathbf{e}_{ij})
	\right|
	n \rho\nth(\mathbf{e}_j-\mathbf{e}_i|\y)
	\\&\qquad
	+
	\norminfty{ f} \left|n \rho\nth(\mathbf{e}_j-\mathbf{e}_i|\y)-\lambda_{ij}(\y) \right|,
    \end{align*}
the supremum of which, over $y\in E_1\nth\cap K_1$, vanishes as $n\to \infty$,
since $f$ is uniformly continuous and by \eqref{eq:assumption}, \eqref{eq:rho_uniform}.

Finally, the supremum of \eqref{An-A.2b} vanishes, as $n\to\infty$, by \eqref{eq:rho_uniform}, which concludes the proof of convergence of generators.

\subsubsection{Weak convergence (general mutation)}
The rest of the proof of Theorem \ref{thm:weak_conv} now follows from the same arguments as in \cite{favero2024}. We report a brief sketch here. 

Let $T\nth$ and $T$ be the semigroups associated to $\tilde{\Z}\nth$ and $\Z$ respectively. The convergence of generators, which holds in the PIM case, implies the following convergence of semigroups:  
for all $f\in\hat{C}(E)$, for all $t\geq 0$,
    \begin{equation}
    \lim_{n\to\infty}
    \sup_{(c,\y,\m)\in E\nth}
    \left|(T\nth) ^{\lfloor{tn}\rfloor} 
    \eta_n f(c,\y,\m) - \eta_n T(t) f (c,\y,\m)\right|
    =0,
    \end{equation} 
see \cite[Sect. 5.2]{favero2024} for details.
The semigroup $T$ is not conservative,  in fact, the process $\Z$ exits the state space in a finite time (when $\Y$ reaches the origin). Using the classical technique of \citet[Ch.4]{Ethier1986}, $T$ is extended to a conservative (Feller) semigroup, while the state space is extended to include the so-called \textit{cemetery point}. The weak convergence of the processes then easily follows, proving Theorem \ref{thm:weak_conv} in the PIM case. See \cite[Sect. 4 and 5.3]{favero2024} for details. 

To prove the result in the general mutation case, we can use the change-of-measure argument developed in \cite[Sect. 3]{favero2024}. This consists of changing the measures so that, under the new measures, the originally parent-dependent mutations become parent-independent. Crucially, the Radon--Nikodym derivatives (likelihood ratios) of the changes of measure depend on the block-counting and mutation-counting components, $\Y\nth, \Y, \M\nth, \M$, not on the cost-counting components, $C\nth, C$, and thus are exactly the same as in \citep{favero2024}, where the cost-components are not considered. Then, the PIM results can be applied to complete the proof in the general case, see \cite[Sect. 5.4]{favero2024} for details. 

\qed

\subsection{Asymptotic cost of one GT step --  Proof of Proposition \ref{prop:GT_cost_expansion}}
\label{proof:GT_cost_expansion}
    \begin{align*}
    c_{\scriptscriptstyle GT}\nth(\vv\mid\y)
    &= 
    \sum_{\vv'}p(n \y\mid n\y -\vv') 
    =
    \sum_{i=1}^d p(n \y\mid n\y -\e_i)+ \sum_{i,j=1}^d p(n \y\mid n\y -\e_i+\e_j)
    \\&=
    \sum_{i=1}^d 
    \frac{n y_i -1}{n\norm{\y}-1 + \theta} 
    + \sum_{i,j=1}^d
    \frac{ny_j-1+\delta_{ij}}{n \norm{\y}} \frac{\theta P_{ji}}{n\norm{\y}-1 + \theta}
    \\&=
    \frac{n \norm{\y}-d}{n \norm{\y}-1+\theta} 
    + \frac{n \norm{\y}-d +\sum_{i=1}^d P_{ii}}{n \norm{\y}} \frac{\theta}{n \norm{\y}-1+\theta} 
    \\&=
    \frac{1}{n\norm{\y}-1+\theta} 
    \left[n \norm{\y}-d + \theta \frac{n \norm{\y}-d +\sum_{i=1}^d P_{ii}}{n \norm{\y}} \right]
    \\&=
    \left[\frac{1}{n} \frac{1}{\norm{\y}}+\frac{1}{n^2} \frac{1-\theta}{\norm{\y}^2}+o\left(\frac{1}{n^2}\right)\right]
    \left[n \norm{\y}-d + \theta + o(1) \right]
    \end{align*}
from which the result follows.

\qed

\subsection{Asymptotic cost of one SD step --  Proof of Proposition \ref{prop:SD_cost_expansion}}
\label{proof:SD_cost_expansion}
Using that 
    \begin{align*}
    \frac{1}{n\norm{\y}-1+\theta}=
    \frac{1}{n} \frac{1}{\norm{\y}}+\frac{1}{n^2} \frac{1-\theta}{\norm{\y}^2}+o\left(\frac{1}{n^2}\right)  ,
    \end{align*}
we obtain    
    \begin{align*}
    \hat{\pi}[i\mid n\y-\e_j ]
    &=
    \sum_{i'=1}^d \frac{ny_{i'}-\delta_{i'j}}{n\norm{\y}-1+\theta}
    \sum_{m=0}^\infty \left(\frac{\theta}{n\norm{\y}-1+\theta}\right)^m(P^m)_{i'i}
    \\&=
    \sum_{i'=1}^d \frac{ny_{i'}-\delta_{i'j}}{n\norm{\y}-1+\theta}
    \left[\delta_{i'i}
    +\frac{\theta}{n\norm{\y}-1+\theta} P_{i'i} + o\left(\frac{1}{n}\right)
    \right]
    \\&=
    \frac{ny_i-\delta_{ij}}{n\norm{\y}-1+\theta}
    +
    \frac{\theta}{(n\norm{\y}-1+\theta)^2} \sum_{i'=1}^d (ny_{i'}-\delta_{i'j}) P_{i'i}
    +o\left(\frac{1}{n}\right)
    \\&=
    \frac{y_i}{\norm{\y}}  
    - \frac{1}{n} \frac{\delta_{ij}}{\norm{\y}}
    +
    \frac{1}{n} \frac{y_i(1-\theta)}{\norm{\y}^2}
    +
    \frac{1}{n}  \frac{\theta}{\norm{\y}^2}
    \sum_{i'=1}^d y_{i'} P_{i'i}
    +o\left(\frac{1}{n}\right)
    \end{align*}
from which the result follows. 

\qed

\subsection{Weak convergence (under proposal distributions) -- Proof of Proposition \ref{prop:weak_conv_proposals}} 
\label{proof:weak_conv_proposals}

The infinitesimal generator of $\tilde{\Z}\nth$ under the GT or SD proposals can be obtained from the expression \eqref{eq:An} of the infinitesimal generator of $\tilde{\Z}\nth$ under the true distribution, by replacing $\rho\nth(\vv\mid\y) = p(n\y -\vv \mid n \y)$ with
$\rho_{GT}\nth(\vv\mid\y) = q_{GT}(n\y -\vv \mid n \y)$
or
$\rho_{SD}\nth(\vv\mid\y) = q_{SD}(n\y -\vv \mid n \y)$.

Using Proposition \ref{prop:GT_cost_expansion}, Definition \ref{def:forward_transitions}, and \eqref{eq:GT_proposal} for GT; and 
Proposition \ref{prop:SD_cost_expansion} and \eqref{eq:SD_proposal} for SD; 
it is straightforward to show that the first order approximation of the GT and SD transition probabilities corresponds to the first order approximation of the true transition probabilities. 
That is, assuming  $\y\nth\to\y\in\mathbb{R}_{+}^d$, we have 
    \begin{align*}
    &
   \lim_{n\to\infty} \rho_{GT}\nth (\e_j\mid \y\nth) 
    =
    \lim_{n\to\infty} \rho_{SD}\nth (\e_j\mid \y\nth)
    =
    \lim_{n\to\infty} \rho\nth (\e_j\mid \y\nth)
    =
    \frac{y_j}{\norm{\y}};
    \\
    & 
    \lim_{n\to\infty} n \rho_{GT}\nth (\e_j-\e_i\mid \y\nth)
    =
    \lim_{n\to\infty} n \rho_{SD}\nth (\e_j-\e_i\mid \y\nth)
    =
    \lim_{n\to\infty} n \rho\nth (\e_j-\e_i\mid \y\nth)
    =
    \lambda_{ij}(\y).
    \end{align*}
The convergence above is uniform in the sense of \eqref{eq:rho_uniform}. 
Then, the convergence of generators holds under the proposal distributions. The rest of the proof of Proposition \ref{prop:weak_conv_proposals} is then identical to that of Theorem \ref{thm:weak_conv}, without even the need for a change-of-measure argument, since the proposal transition probabilities are always explicit (as the transition probabilities in the PIM case). 

\qed

\subsection{Convergence of importance sampling weights -- Proof of Theorem \ref{thm:weights_convergence}}
\label{proof:weights_convergence}

By \cite[Theorem 4.3]{favero2022}, when $\y_0\nth\to\y_0$, as $n\to\infty$,  we have that
    $$
    n^{d-1}p(n\y_0\nth)
    \to \norm{\y_0}^{1-d}
    \tilde{p}\left(\frac{\y_0}{\norm{\y_0}}\right)
    = \tilde{p}\left(\y_0\right),
    $$
where $\tilde{p}$ is the (smooth) stationary density of the dual Wright-Fisher diffusion. By Theorem \ref{thm:weak_conv}, or by \cite[Theorem 2.1]{favero2024}, we know 
    $
    \Y\nth(\lfloor{tn}\rfloor{})
    \xrightarrow[]{\mathcal{D}}  
    \Y(t)=\y_0(1-t) 
    $, 
thus, by applying again \cite[Theorem 4.3]{favero2022} we obtain 
    $$
    n^{d-1}p(n\Y\nth(\lfloor{tn}\rfloor{}))
    \xrightarrow[n\to\infty]{\mathcal{D}} 
    \norm{\Y(t)}^{1-d}
    \tilde{p}\left(\frac{\Y(t)}{\norm{\Y(t)}}\right)
    =
   (1-t)^{1-d}  \tilde{p}\left(\y_0\right).
    $$
The first convergence is proven. 

\qed

\subsubsection{Griffiths--Tavar\'e}
By Theorem \ref{thm:weak_conv} and Proposition \ref{prop:GT_cost_expansion},
    \begin{align*}
    C\nth_{GT}(\lfloor{tn}\rfloor{})
    \xrightarrow[n\to\infty]{\mathcal{D}}
    C_{GT}(t)
    = &
    \exp\left\{\sum_{i=1}^d y_{0,i} \int_0^t  \frac{1-d}{\norm{\y_0(1-u)}}du \right\}
    \\ =&
    \exp\left\{\int_0^t \frac{1-d}{1-u}du \right\}
    \\ =&
    \exp\left\{(d-1)\log(1-t) \right\}
    \\ =& (1-t)^{d-1} ,
    \end{align*}
which proves the convergence of costs. Then, by equation \ref{eq:normalised_weights_costs}, the convergence of the corresponding weights is also proven. 

\qed
 
\subsubsection{Stephens--Donnelly}
By Theorem \ref{thm:weak_conv} and Proposition \ref{prop:SD_cost_expansion},
    \begin{align*} 
    C\nth_{SD}(\lfloor{tn}\rfloor{}) 
    \xrightarrow[n\to\infty]{\mathcal{D}}
    C_{SD}(t)
    = \exp\left\{\sum_{i=1}^d y_{0,i} \int_0^t  \hat{a}_i\left(\y_0(1-u)\right)du \right\}
    =\exp\left\{\int_0^t \frac{1-d}{1-u}du \right\}
    =  (1-t)^{d-1},
    \end{align*}
since
    \begin{align*}
    \hat{a}_i(\y_0(1-u))
    = \frac{1-\theta}{1-u}
    -\frac{1}{y_{0,i}(1-u)}\left(1-\sum_{i'=1}^d y_{0,i'}\theta P_{i'i}\right),
    \end{align*}
and  
    \begin{align*}
    \sum_{i=1}^d y_{0,i}
    \hat{a}_i(\y_0(1-u))
    = 
    \frac{1-\theta}{1-u}
    -\frac{1}{1-u} \sum_{i=1}^d  
    \left(1-\sum_{i'=1}^d y_{0,i'}\theta P_{i'i}\right)
    =
    \frac{1}{1-u} \left[
    1-\theta -d + \theta
    \right].
    \end{align*}
This proves the convergence of costs. Then, by equation \ref{eq:normalised_weights_costs}, the convergence of the corresponding weights is also proven. 

\qed

\section{Discussion}\label{sect:discussion}

We have shown that the existing large-sample asymptotics for the coalescent developed by \cite{favero2024} can be extended to incorporate cost functionals of the coalescent.
Particular choices of costs render the theory applicable to analysis of sequential importance sampling algorithms for the coalescent.
Importance sampling for the coalescent is notoriously difficult for large samples, and to our knowledge, our results are the first rigorous description of its behaviour.
They also create a connection between coalescent importance sampling and stochastic control approaches to rare event simulation, where the asymptotic analysis of a sequence of costs is a standard method.

We envisage several interesting directions to which our work can be extended.
Our exposition has focused on the coalescent as a model in population genetics, but it also finds applications as a prior in Bayesian nonparametrics and clustering \citep{gorur2008}.
Other models of coalescing and mutating lineages are also widespread in those settings, with the two-parameter Pitman--Yor process being a prominent example \citep{perman1992, pitman1997}.
Analogues of our scaling limit might hold for the Pitman--Yor process, or other Bayesian clustering models, and inform their use for large sample sizes as well.

In genetics, the coalescent is a robust model for a wide range of settings and organisms, but relies on a small variance of family sizes relative to population size.
If family sizes are heavily skewed, evolution can be more accurately described by multiple merger coalescents, in which more than two lineages can coalesce simultaneously \citep{donnelly1999, pitman1999, sagitov1999}, and more than one simultaneous coalescence can take place \citep{mohle2001, schweinsberg2000}.
Importance sampling methods for these types of models are available but are even less scalable as those for the standard coalescent \citep{birkner2008, birkner2011, Koskela2015}.
A similar scaling limit for multiple merger coalescents would be of mathematical interest, and could inform importance sampling methods for them as well.
If such a scaling limit exists, we expect it would incorporate macroscopic jumps towards the origin driven by multiple mergers.

Finally, modern data sets rarely consist of a single locus.
Hence it would be of interest to obtain a similar description of weighted ancestral recombination graphs, which are the multi-locus analogue of the coalescent.
Evolution at two unlinked loci would correspond to two independent copies of our limiting process. A scaling limit for two linked loci should be informative of how linkage creates correlation between the two copies of the limit process.
Such a result would be of mathematical interest, and could also inform importance sampling methods \citep{fearnhead2001} and more heuristic methods \citep{li:2003} for genomic inference.

\section*{Acknowledgements}
We would like to thank Henrik Hult for suggesting the initial idea that originated this project and for contributing to its early development. 
MF acknowledges the support of the Knut and Alice Wallenberg Foundation (Program for Mathematics, grant 2020.072).

\bibliography{mybib}{}
\bibliographystyle{abbrvnat}




\end{document}